\newtheorem{Th}{Theorem}[section]
\newtheorem{Def}[Th]{Definition}
\newtheorem{Rq}[Th]{Remark}
\newtheorem{Pro}[Th]{Proposition}
\newtheorem{Cor}[Th]{Corollary}
\newtheorem{Lem}[Th]{Lemma}
\newcommand{\R}{\mathbb{R}}
\newcommand{\T}{\mathbf{T}}
\newcommand{\E}{\mathbb{E}}
\newcommand{\VV}{\widehat{\mathbb{V}}}
\newcommand{\ZZ}{\widehat{Z}}
\newcommand{\I}{\mathfrak{I}}
\newcommand{\II}{\mathcal{I}}
\newcommand{\Vv}{\mathbb{V}}
\newcommand{\Sp}{\mathbb{S}}
\newcommand{\V}{\mathbf{k}}
\newenvironment{proof}{\noindent\textit{Proof.~}}{\hfill$\blacksquare$\bigbreak} 
\newlength{\plarg}
\title{A vector field method for massless relativistic transport equations and applications}
\author{Léo Bigorgne\footnote{Laboratoire de Mathématiques, Univ. Paris-Sud, CNRS, Université Paris-Saclay, 91405 Orsay.}}
\begin{document}

\maketitle
    
\begin{abstract}
In this article, we present a vector field method for the study of solutions to massless relativistic transport equations. Compared to \cite{FJS}, we remove the Lorentz boosts of the commutation vector fields and we prove a new functional inequality in order to derive pointwise decay estimates on the solutions. It makes our method more suitable for the study of systems coupling a massless Vlasov equation with an equation which has a different speed of propagation. We also believe that our approach can be adapted more easily to the context of curved backgrounds. 

In the second part of this paper, we apply our method in order to derive sharp decay estimates on the spherically symmetric small data solutions of the relativistic massless Vlasov-Poisson system. No compact support assumption is required on the data and, in particular, the initial decay in the velocity variable is optimal. In order to compensate the slow decay rate of the particle density near the light cone, we take advantage of the null structure of the equations.
\end{abstract}

\section{Introduction}

The purpose of this article is to provide a methodology for the study of massless relativistic transport equations which does not rely on the full Lorentz invariance of the operator\footnote{In this article, we use the Einstein summation convation. A sum on Greek indices goes from $0$ to $3$ whereas a sum on Latin indices goes from $1$ to $3$. For instance, $v^i \partial_i= \sum_{i=1}^3 v^i \partial_i$.} $ \T:=|v| \partial_t + v^i \partial_i$. More precisely, our goal is to present a modification of the vector field method developed by Fajman-Joudioux-Smulevici in \cite{FJS}, which consists in 
\begin{enumerate}
\item Commuting the equations by the vector fields of the set $\widehat{\mathbb{K}}_0$, which are
\begin{itemize}
\item $\partial_t$, the translation in time, and $\partial_k$, for $1 \leq k \leq 3$, the translations in space.
\item $S:= t \partial_t+r \partial_r$, the scaling vector field in $(t,x)$.
\item $S_v:= v^i \partial_{v^i}$, the scaling vector field in $v$.
\item $\widehat{\Omega}_{ij}:= x^i \partial_j-x^j \partial_i+v^i \partial_{v^j}-v^j \partial_{v^i}$, for $1 \leq i < j \leq 3$, the complete lift\footnote{We refer to \cite{FJS} for an introduction to the complete lift of a vector field $X^{\mu} \partial_{\mu}$ (see Section $2.7$ and Appendix $C$).} of the rotational vector field $\Omega_{ij} := x^i \partial_j-x^j \partial_i$.
\item $\widehat{\Omega}_{0k}:= x^k \partial_t+t \partial_k+|v| \partial_{v^k}$, for $1 \leq k \leq 3$, the complete lift of the Lorentz boost $\Omega_{0k} := x^k \partial_t+t\partial_k$.
\end{itemize}
\item Prove boundedness on $L^1$ norms of the form $\| w^p  \widehat{Z}^{\beta} f \|_{L^1_{x,v}}$, where $\widehat{Z}^{\beta} \in \widehat{\mathbb{K}}_0^{|\beta|}$ is a combination of $|\beta|$ vector fields of $\widehat{\mathbb{K}}_0$, $p \in \mathbb{N}$ and $w \in \mathbf{k}_0$ is a weight preserved by $\T$. The set $\mathbf{k}_0$ is composed of
\begin{itemize}
\item $\frac{v^{\mu}}{|v|}$, $0 \leq \mu \leq 3$.
\item $s:= t-x^i\frac{v_i}{|v|}$, the scaling.
\item $z_{ij}:=x^i\frac{v^j}{|v|}-x^j\frac{v^i}{|v|}$, for $1 \leq i < j \leq 3$, the rotations.
\item $z_{0k}:= x^k-t\frac{v^k}{|v|}$, for $1 \leq k \leq 3$, the Lorentz boosts.
\end{itemize}
\item Obtain pointwise decay estimates on $\int_{\R^3_v} |f| dv$ through Klainerman-Sobolev type inequalities, such as
\begin{equation}\label{th6}
 \forall \hspace{0.5mm} (t,x) \in [0,T[ \times \R^3, \hspace{0.8cm} \int_{\R^3_v} |g|(t,x,v) dv \hspace{2mm} \lesssim \hspace{2mm} \frac{1}{(1+t+r)^2(1+|t-r|)} \sum_{|\beta| \leq 3} \sum_{\widehat{Z}^{\beta} \in \widehat{\mathbb{K}}_0} \left\| \widehat{Z}^{\beta} f (t, \cdot, \cdot ) \right\|_{L^1_{x,v}}, 
\end{equation}
 which strongly relies on the use of the Lorentz boosts (see Theorem $6$ of \cite{FJS}).
\end{enumerate}

The vector field method was introduced by Klainerman \cite{Kl85} in the context of wave equations. In order to apply it to the study of coupled wave equations with different speed of propagation, Klainerman-Sideris adapted it in \cite{KlSid} to a smaller set of vector fields
$$\Vv \hspace{2mm} := \hspace{2mm} \left\{ \partial_t, \hspace{1mm} \partial_1, \hspace{1mm} \partial_2, \hspace{1mm} \partial_3, \hspace{1mm} S, \hspace{1mm} \Omega_{12}, \hspace{1mm} \Omega_{13}, \hspace{1mm} \Omega_{23} \right\} ,$$ 
which do not contain the Lorentz boosts $\Omega_{0k} := t \partial_k+x^k \partial_t$. Recently, Fajman-Joudioux-Smulevici extended this method in \cite{FJS} to massless and massive relativistic transport equations. This leads in particular to the proof of the stability of Minkowski spacetime as a solution to the massive Einstein-Vlasov system \cite{FJS3} (see also \cite{Lindblad}). The aim of this paper is, as \cite{KlSid} for the wave equation, to reduce the set of the commutation vector fields used in \cite{FJS} for the study of massless relativistic transport equations. More precisely, we will not use the complete lifts of the Lorentz boost vector fields $\widehat{\Omega}_{0k} =t \partial_k+x^k \partial_t+|v| \partial_{v^k}$ and we will commute the transport equations by
$$ \VV \hspace{2mm} := \hspace{2mm} \left\{ \partial_t, \hspace{1mm} \partial_1, \hspace{1mm} \partial_2, \hspace{1mm} \partial_3, \hspace{1mm} S, \hspace{1mm} S_v, \hspace{1mm} \widehat{\Omega}_{12}, \hspace{1mm} \widehat{\Omega}_{13}, \hspace{1mm} \widehat{\Omega}_{23} \right\} \hspace{2mm} = \hspace{2mm} \widehat{\mathbb{K}}_0 \setminus \left\{ \widehat{\Omega}_{01}, \hspace{1mm} \widehat{\Omega}_{02} , \hspace{1mm} \widehat{\Omega}_{03} \right\} .$$
As a consequence, we cannot use the Klainerman-Sobolev inequality \eqref{th6} anymore in order to derive sharp asymptotics on the solutions as it requires a good control of $\| \widehat{\Omega}_{0k} f\|_{L^1_{x,v}}$. Instead, we prove and use the following functional inequality (see Proposition \ref{decay}), which holds for all sufficiently regular function $g$,
\begin{eqnarray}
\nonumber \hspace{-2mm} \int_{\R^3_v} |g|(t,x,v) dv & \lesssim & \frac{1}{(1+|x|)^2 (1+|t-|x||)} \Bigg( \sum_{|\beta| \leq 2} \sum_{\widehat{\Omega}^{\beta} \in \VV^{|\beta|}}  \int_{ \{ t \} \times \R^3_y} \int_{\R^3_v} r \left| \widehat{\Omega}^{\beta} \left( \T ( g) \right) \right| \frac{dv}{|v|} dy \\
& & \hspace{4cm} +\sum_{|\xi| \leq 3} \sum_{\widehat{Z}^{\xi} \in \VV^{|\xi|}} \int_{\{ t \} \times \R^3_y} \int_{\R^3_v} \left(1+\left|t-x^i \frac{v_i}{|v|} \right| \right) \left| \ZZ^{\xi} g \right| dv dy \Bigg). \label{introdecay}
\end{eqnarray}
This modification of the vector field method developed in \cite{FJS} is then adapted to the study of systems coupling a massless relativistic transport equation with a wave equation which has a different speed of propagation or with an elliptic equation. Moreover, our method is also more likely to be adapted to massless Vlasov equations on a curved background such as Schwarzschild spacetime, in the spirit of \cite{DR09} for the wave equation. In this perspective, let us mention that we only use the translations in space as commutators in order to deal with the domain $r \leq 1$ and that we could avoid the use of the weights\footnote{We choose to work with the weights $z_{0k}$, $1 \leq k \leq 3$, for simplicity and since they do not prevent us to use our method for systems composed of a relativistic transport equation and an elliptic equation. We refer to Remark \ref{Lorentzweight} below to see how recover the results of this article without using them.} $z_{0k} \in \mathbf{k}_0$. Note that \cite{ABJ} proved, using a vector field method, an integrated decay result for solutions to massless relativistic transport equations on slowly rotating Kerr spacetime.

Subsequently, we will apply our method in order to derive the asymptotic behavior of the spherically symmetric small data solutions of the relativistic massless Vlasov-Poisson system
\begin{eqnarray}
 |v| \partial_t f + v^j \partial_j f +\sigma |v| \nabla^i \phi \cdot \partial_{v^i} f & = & 0 \label{VP1} \\
 \Delta \phi & = & \int_{v \in \R^3_v} f dv, \label{VP2}
\end{eqnarray}
where
\begin{itemize}
\item $\sigma \in \{-1,1 \}$. If $\sigma =1$ we are in the repulsive case and if $\sigma=-1$, we are in the attractive case.
\item The particle density $f$ depends on $(t,x,v) \in \R_+ \times \R^3 \times (\R^3 \setminus \{ 0 \} )$. In view of its physical meaning, $f$ is usually supposed non negative but since its sign play no role in this article, we do not restrict its values to $\R_+$.
\item The potential $\phi$ depends on $(t,x) \in \R_+ \times \R^3$.
\end{itemize} 
In the attractive case $\sigma=-1$ and for spherically symmetric solutions, the classical Vlasov-Poisson system, for which the transport equation \eqref{VP1} is replaced by $$ \partial_t f + v^j \partial_j f - \nabla^i \phi \cdot \partial_{v^i} f \hspace{2mm} = \hspace{2mm} 0,$$
can be obtained in a weak field and low velocity limit of the physically relevant Einstein-Vlasov system \cite{Ren94}. No similar results are known for the massless and the massive\footnote{For the massive system, equation \eqref{VP1} is replaced by $ \sqrt{1+|v|^2}\partial_t f + v^j \partial_j f - \sqrt{1+|v|^2} \nabla^i \phi \cdot \partial_{v^i} f \hspace{2mm} = \hspace{2mm} 0$.} relativistic Vlasov-Poisson systems, although they are expected to be obtained as a weak field limit of the Einstein-Vlasov system. Let us mention that when this supposed approximation does not hold, i.e. for strong field, finite-time blow up phenomena are known \cite{GlScRVP}.

Our result can be stated as follows.
\begin{Th}\label{theorem}
Let $\epsilon >0$, $N \geq 12$ and $f_0 : \R^3_x \times \R^3_v \rightarrow \R$ be a spherically symmetric function satisfying
$$ \sum_{|\alpha|+|\beta| \leq N+3} \int_{\R^3_x} \int_{\R^3_v} (1+|x|)^{|\alpha|+5} (1+|v|)^{|\beta|} \left| \partial^{\alpha}_x \partial^{\beta}_v f_0 \right| dx ds \hspace{1mm} \leq \hspace{1mm} \epsilon \hspace{1cm} \text{and} \hspace{1cm} \Big( |v| \leq 2 \Rightarrow f_0( \cdot ,v)=0 \Big).$$
Then, there exists $\epsilon_0 >0$ depending only on $N$ such that, if $\epsilon \leq \epsilon_0$, then the unique classical solution $(f,\phi)$ of the relativistic massless Vlasov-Poisson system \eqref{VP1}-\eqref{VP2} exists globally in time and satisfies the following properties.
\begin{itemize}
\item The particle density $f$ vanishes for small velocities, i.e. $|v| \leq 1 \Rightarrow f( \cdot , \cdot ,v)=0$.
\item Energy bounds for $f$ :
$$ \forall \hspace{0.5mm} t \in \R_+, \hspace{1cm} \sum_{z \in \mathbf{k}_0} \sum_{| \beta| \leq N} \sum_{\widehat{Z}^{\beta} \in \VV^{|\beta|}} \int_{\R^3_x} \int_{\R^3_v} |z|^2 \left| \widehat{Z}^{\beta} f \right| (t,x,v) dv dx \hspace{2mm} \lesssim \hspace{2mm} \epsilon.$$
\item Pointwise decay estimates on the velocity averages of $f$ and its derivatives,
$$ \forall \hspace{0.5mm} (t,x) \in \R_+ \times \R^3, \hspace{2mm} |\beta| \leq N-3, \hspace{1cm} \int_{\R^3_v} \left| \widehat{Z}^{\beta} f \right|(t,x,v) dv \hspace{2mm} \lesssim \hspace{2mm} \frac{\epsilon}{(1+|t-|x||)^2(1+|x|)^2}.$$
\item Boundedness for the potential,
$$ \forall \hspace{0.5mm} t \in \R_+,  \hspace{1cm} \sum_{|\gamma| \leq N} \sum_{Z^{\gamma} \in \Vv^{|\gamma|}} \left\| \nabla_x Z^{\gamma} \phi \right\|_{L^2(\Sigma_t)} \hspace{2mm} \lesssim \hspace{2mm} \frac{\epsilon}{\sqrt{1+t}}$$
\item Pointwise decay estimates for the potential,
$$ \forall \hspace{0.5mm} (t,x) \in \R_+ \times \R^3, \hspace{2mm} |\gamma| \leq N-3, \hspace{1cm} |\nabla_x Z^{\gamma} \phi|(t,x) \hspace{2mm} \lesssim \hspace{2mm} \frac{\epsilon}{(1+t)(1+|x|)}.$$
\end{itemize}
\end{Th}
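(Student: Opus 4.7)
The plan is a bootstrap argument on the time interval $[0, T^*[$ on which a local classical solution exists and satisfies the energy and $L^2$ bounds of the theorem with a larger constant $2C\epsilon$ in place of $C\epsilon$. The goal is to recover the bounds with the smaller constant, which forces $T^* = +\infty$. A preliminary step propagates the support condition $|v| \geq 1$ on $\mathrm{supp}(f)$: along the characteristics of \eqref{VP1}, $\frac{d}{dt}|v| = -\sigma v_i \nabla^i \phi$, so the bootstrap bound $|\nabla \phi| \lesssim \epsilon (1+t)^{-1}(1+|x|)^{-1}$ combined with unit-speed propagation yields $\int_0^t |\nabla \phi|(s, x(s))\, ds \lesssim \epsilon$, and the initial condition $|v| \geq 2$ propagates to $|v| \geq 1$. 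Hence the factor $|v|$ appearing in the transport equation is bounded from below.

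Spherical symmetry simplifies the Poisson step considerably: it gives $|\nabla \phi(t,x)| \lesssim |x|^{-2} \int_{|y| \leq |x|} \int_{\R^3_v} |f|(t,y,v)\, dv\, dy$ (Newton's shell theorem), and commuting with vector fields of $\Vv$ preserves the symmetry, yielding similar bounds for $\nabla_x Z^\gamma \phi$. The pointwise decay claimed for $\nabla_x Z^\gamma \phi$ then follows from the bootstrap pointwise decay on $\int |\ZZ^\beta f|\, dv$, while the $L^2(\Sigma_t)$ bound follows from standard elliptic theory together with the weighted energy on $f$. The high initial weight $(1+|x|)^{|\alpha|+5}$ in the data ensures that the spatial integrals needed in this step are finite.

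The core of the argument is the weighted $L^1_{x,v}$ estimate for $\ZZ^\beta f$. Since $\VV \subset \widehat{\mathbb{K}}_0$, the commutators $[\T, \ZZ]$ vanish or are proportional to $\T$, so the commuted equation reads schematically
\[
\T(\ZZ^\beta f) = - \sigma \sum_{|\beta_1|+|\beta_2| \leq |\beta|} \ZZ^{\beta_1}(|v|\nabla^i \phi) \cdot \ZZ^{\beta_2}(\partial_{v^i} f) + (\text{harmless transport terms}).
\]
Testing against $\mathrm{sgn}(\ZZ^\beta f)$, integrating in $(x,v)$, integrating by parts in $v$ to dispose of the $\partial_{v^i}$ factor, and using $\T(z) = 0$ for $z \in \V_0$ to avoid weight losses, one obtains a Gronwall-type differential inequality for the weighted $L^1_{x,v}$ norms. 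Pointwise decay on $\int |\ZZ^\beta f|\, dv$ for $|\beta| \leq N-3$ is then recovered by feeding $g = \ZZ^\beta f$ into the new Klainerman-Sobolev inequality \eqref{introdecay}; the two sums on its right-hand side are bounded by the weighted energies and by the source-term estimate above.

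The main obstacle is closing the top-order weighted energy estimate. Near the light cone the pointwise decay $|\nabla \phi| \lesssim \epsilon/((1+t)(1+|x|))$ is sharp and, integrated naively in time, produces a logarithmic loss. The cure is to exploit the null structure of \eqref{VP1}: decomposing $|v| \nabla^i \phi \, \partial_{v^i}$ against the null frame attached to $v/|v|$, the derivatives transverse to $v$ pair with weights $z \in \V_0$, in particular $s = t - x^i v_i/|v|$ and the $z_{ij}$, which supply the missing $(1+|t-|x||)$ factor. Spherical symmetry is crucial here because it also prevents the appearance of purely angular terms that would otherwise require the excluded boost lifts $\widehat{\Omega}_{0k}$, allowing the restricted vector field set $\VV$ to close the argument.
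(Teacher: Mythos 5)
Your high-level bootstrap architecture and most of the ingredients (propagation of the velocity support, radial elliptic estimates for $\phi$ via the shell theorem, weighted $L^1$ energy estimates for $f$ closed by the null structure of Lemma \ref{reducing}) do match the paper's proof. However, there are at least three genuine gaps.

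The most important one concerns the $L^2$ estimate. You claim the $L^2(\Sigma_t)$ bound on $\nabla_x Z^\gamma\phi$ "follows from standard elliptic theory together with the weighted energy on $f$." For the non-spherically-symmetric derivatives (those containing a space translation, which cannot be estimated by the radial argument), the paper uses the Calder\'on--Zygmund inequality $\|\nabla_x\nabla_x Z^\alpha\phi\|_{L^2} \lesssim \|\int_{\R^3_v}g_\alpha\,dv\|_{L^2}$, and thus needs control of the $L^2$ norm of the velocity average $\int_{\R^3_v}|\widehat{Z}^\beta f|\,dv$ itself, not merely a weighted $L^1$ energy. For $|\beta|\leq N-3$ one can interpolate ($\|h\|_{L^2}^2\lesssim\|h\|_{L^\infty}\|h\|_{L^1}$) using the pointwise decay from Proposition \ref{decay}, but for $N-2\leq|\beta|\leq N$ the pointwise decay is unavailable since the Klainerman--Sobolev inequality costs three derivatives. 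Closing the bootstrap at top order therefore requires an additional $L^2$ bootstrap assumption on $\int_{\R^3_v}|\widehat{Z}^\beta f|\,dv$ and a non-trivial mechanism to improve it: the paper splits $R=H+G$ with $H$ solving the homogeneous matrix transport system $\T_\phi(H)+AH=0$ (so it can be commuted and treated pointwise) and $G=KW$ with $K$ solving an auxiliary matrix transport equation, then estimates $\|\int|G|dv\|_{L^2}$ via the Cauchy--Schwarz trick \eqref{eq:inhoG}. Your proposal skips this entirely, and without it the top-order energy estimate does not close: when the potential carries $N$ or $N-1$ derivatives, one cannot estimate $\nabla_x Z^\gamma\phi$ pointwise.

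Second, your support-propagation step is circular in a subtle way. You integrate $\frac{d}{dt}|V|$ along the characteristics of \eqref{VP1}, but those characteristics may well reach $v=0$ (where the system degenerates), so one cannot a priori trace a point $(t,x,v)$ with small $|v|$ backward to $t=0$ to conclude $f(t,x,v)=0$. The paper's fix is to solve the cutoff system $\T_\phi^\chi(f)=0$ first; its characteristics never hit $v=0$, so the method of characteristics is legitimate, and one then verifies a posteriori that the solution also solves the original system.

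Third, your claim that "commuting with vector fields of $\Vv$ preserves the symmetry" is false: $\partial_i$ does not preserve spherical symmetry, and this is precisely why the paper treats $Z^\gamma\in\Vv_{\Sp}^{|\gamma|}$ and $Z^\gamma$ containing a space translation separately, using the Newtonian shell argument for the former and Calder\'on--Zygmund for the latter. Related to this, you do not address the non-zero total charge: $\nabla_x\phi$ cannot decay faster than $r^{-2}$ when $Q\neq0$, which caps the $L^2$ rate at $(1+t)^{-1/2}$ for the un-commuted potential and forces the charge discussion of Subsection \ref{subseccharge}.
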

\begin{Rq}
A similar result holds if we merely assume that $f_0$ vanishes for all $|v| \leq R$, with $R >0$. In that case, $\epsilon_0$ would also depend on $R$.
\end{Rq}
\begin{Rq}
Better estimates hold on the potential in certain cases. More precisely, if $Z^{\gamma} \phi$ is chargeless\footnote{We refer to Subsection \ref{subseccharge} for the definition of the charge of $Z^{\gamma} \phi$.}, then $\left\| \nabla_x Z^{\gamma} \phi \right\|_{L^2(\Sigma_t)} \hspace{0.5mm} \lesssim \hspace{0.5mm} \epsilon (1+t)^{-1}$ and if $Z^{\gamma} \phi$ is spherically symmetric (and $|\gamma| \leq N-3$), then $|\nabla_x Z^{\gamma} \phi| \hspace{0.5mm} \lesssim \hspace{0.5mm} \epsilon (1+t+|x|)^{-2}$.
\end{Rq}
\begin{Rq}
We start with optimal decay in $v$ on the particle density in the sense that we merely require $f_0(x, \cdot )$ to be integrable in $v$, which is a necessary condition for the source term of the Poisson equation \eqref{VP2} to be well defined.
\end{Rq}
To our knowledge, no results are known on the asymptotic behavior of the solutions of the relativistic massless Vlasov-Poisson system. In \cite{LMR}, Lemou-Méhats-Raphaël proved the existence of a family of finite time blow-up self similar solutions to \eqref{VP1}-\eqref{VP2} as well as the stability of their blow-up dynamic under radially symmetric perturbations. Concerning the classical system, the first results in that direction were obtained by Bardos-Degond in \cite{Bardos} where they proved global existence as well as optimal decay rates for $\int_{\R^3_v} f dv$ and $\nabla \phi$. The optimal decay estimates for the derivatives were established later by \cite{HRV}. The study of the small data solutions of the relativistic massive Vlasov-Poisson system was initiated by Glassey-Schaeffer \cite{GlScRVP}. They proved global existence for non negative spherically symmetric solutions compactly supported in $v$ in the repulsive case ($\sigma=1$). In the attractive case ($\sigma=-1$), global existence is only guaranteed under a smallness assumption and they proved that the solutions blow-up if the conserved energy
\begin{equation}\label{eq:conserve}
\mathfrak{E}_0 \hspace{2mm} := \hspace{2mm} \int_{\R^3_x} \int_{\R^3_v} f \sqrt{m^2+|v|^2} dv dx+ \frac{\sigma}{2} \int_{\R^3_x} |\nabla_x \phi|^2 dx, 
\end{equation}
where $m >0$ is the mass of the particles, is initially negative. We point out that their blow-up result also holds for the massless Vlasov-Poisson system\footnote{One only has to follow their proof. We emphasize that $f(0, \cdot , \cdot)$ has to be non negative.} ($m=0$). Later, Kiessling and Tahvildar-Zadeh precised these results in \cite{Kiessling} by providing critical norms beyond which there exists spherically symmetric initial data which give rise to solutions which blow-up. Recently, Wang proved in \cite{Wang} global existence and sharp asymptotics on the small data solutions of the classical and relativistic Vlasov-Poisson system. Finally, let us also mention the anterior work \cite{Poisson} of Smulevici, which has been improved recently by Duan \cite{Xianglong}, concerning the classical system. The asymptotic behavior of the solutions is derived using vector field methods for both the particle density and the potential. We refer to \cite{Taylor}, \cite{FJS}, \cite{dim4} and \cite{massless} for similar results on other massless Vlasov systems.

\subsection*{Main difficulties}

Recall that the central identity of the proof of\footnote{We refer to \cite{massless} (see Proposition $3.6$) for a concise proof of \eqref{th6}.} \eqref{th6} is
$$ (t-r) \partial_r \hspace{1mm} = \hspace{1mm} \frac{t}{t+r} \frac{x^i}{r} \Omega_{0i}-\frac{r}{t+r} S, \hspace{0.7cm} \text{leading to} \hspace{0.7cm} (1+|t-|x||) \left| \partial_r \int_{\R^3_v} |f| dv \right| \hspace{1mm} \lesssim \hspace{1mm} \sum_{\widehat{Z} \in \widehat{\mathbb{K}}_0} \int_{\R^3_v}\left( |f|+ | \widehat{Z}^{\beta} f| \right) dv . $$
The key step for obtaining \eqref{introdecay} then consists in proving such an inequality without using any Lorentz boost. Instead we use the commutators of $\VV$, the weight $s$ and the vector field $\T$, which makes our estimate adapted to the study of massless relativistic transport equations (see Proposition \ref{recovertauminus}).
\begin{Rq}
Note however that we do not fully recover in \eqref{introdecay} the decay rate of \eqref{th6} for, say, $t \geq 2r$. This issue can be solved by considering stronger weighted norms than the ones of \eqref{introdecay} (see Proposition \ref{decay}).
\end{Rq}

In the second part of this article, we study the small data solutions to the massless relativistic Vlasov-Poisson system. In contrast with the classical system, the Vlasov equation and the Poisson equation are of different nature. More concretely, \cite{Poisson} commute the Laplace operator with the uniform motions $t \partial_k$ and the (classical) transport operator $\partial_t +v^i \partial_i$ with $t \partial_k+\partial_{v^k}$. The relativistic transport operator $\T$ do not commute with $t \partial_k+\partial_{v^k}$ but with the complete lift of the Lorentz boosts $\widehat{\Omega}_{0k}=t \partial_k+x^k \partial_t+|v| \partial_{v^k}$ which are much different from $t \partial_k$. For this reason, we study the system \eqref{VP1}-\eqref{VP2} by using a smaller set of commutation vector fields and we then crucially use our new decay estimate for massless Vlasov field. 

Another difficulty arises from the small velocities (recall that $v \mapsto f(t,x,v)$ is defined on $\R^3 \setminus \{ 0 \}$). Indeed, the characteristics $(X,V)$ of the Vlasov equation \eqref{VP1} satisfy
$$ \dot{X} \hspace{2mm} = \hspace{2mm} \frac{V}{|V|} \hspace{1cm} \text{and} \hspace{1cm} \dot{V}(t) \hspace{2mm} = \hspace{2mm} \nabla_x \phi(t,X(t)),$$
so that the velocity part $V$ can reach the value $v=0$ in finite speed. We encounter a similar problem for the study of the massless Vlasov-Maxwell system in \cite{dim4} and \cite{massless} and we proved that if the particle density $f$ does not initially vanish for small velocities, then the system do not admit a local classical solution (see Proposition $8.1$ of \cite{dim4}). To circumvent this problem, we will then suppose that the velocity support of the Vlasov field is initially bounded away from $0$ and an important step of the proof will consist in proving that this property is propagated in time.

Finally, a crucial point of the proof consists in dealing with the weak decay rate of the solutions. In order to improve an energy bound on, say, $\| S f \|_{L^1_{x,v}}$, where $S$ is the scaling vector field, we will be led to control
$$ I \hspace{2mm} := \hspace{2mm} \int_0^t \int_{\R^3_x} \int_{\R^3_v} \nabla_x S \phi \cdot \nabla_v f dv dx ds.$$
The problem is that, even for a smooth spherically symmetric solution $(g,\psi)$ to 
\begin{eqnarray}
 \T (g) \hspace{2mm} = \hspace{2mm} |v| \partial_t g+v^i \partial_i g & = & 0, \label{simplepb} \\
\Delta \psi & = & \int_{v \in \R^3} g dv, \label{simplepb2}
\end{eqnarray}
$\partial_v g$ essentially behaves as $r  |\nabla_x g |+\sum_{\widehat{Z} \in \VV} | \widehat{Z} g|$ and $\nabla_x S \psi$ decay as $(1+t+r)^{-2}$, which leads to
$$  \int_0^t \int_{\R^3_x} \int_{\R^3_v} \left| \nabla_x S \psi \cdot \nabla_v g \right| dv dx ds \hspace{2mm} \lesssim \hspace{2mm} \sum_{\widehat{Z} \in \VV} \int_0^t \frac{1}{1+s} \| \widehat{Z} g (s, \cdot , \cdot ) \|_{L^1_{x,v}} ds \hspace{2mm} \lesssim \hspace{2mm} \log (1+t) .$$
In our case, this logarithmical growth on $\| S f \|_{L^1_{x,v}}$ would give a slightly worse decay rate on $\nabla_x S \phi$ than the one on $\nabla_x S \psi$, leading to
$$ |I| \hspace{2mm} \lesssim \hspace{2mm} \sum_{\widehat{Z} \in \VV} \int_0^t \frac{\log(1+s)}{1+s} \| \widehat{Z} f (s, \cdot , \cdot ) \|_{L^1_{x,v}} ds \hspace{2mm} \lesssim \hspace{2mm} \log^2 (2+t) \sum_{\widehat{Z} \in \VV} \sup_{0 \leq s \leq t} \| \widehat{Z} f (s, \cdot , \cdot ) \|_{L^1_{x,v}} $$
and preventing us to close the energy estimates. To circumvent this difficulty, we take advantage of the null structure of the system (see Lemma \ref{reducing}) in order to get
$$ |I| \hspace{2mm} \lesssim \hspace{2mm} \sum_{ k < l} \sum_{\widehat{Z} \in \VV} \int_0^t \int_{\R^3_x} \int_{\R^3_v}  | \Omega_{kl} S \phi | | \widehat{Z} f | dv dx ds +\sum_{w \in \V_0} \sum_{\widehat{Z} \in \VV} \int_0^t \int_{\R^3_x} \int_{\R^3_v}  | \nabla_x S \phi | | w \widehat{Z} f | dv dx ds.$$
Since
\begin{itemize}
\item the second term on the right hand side is uniformly bounded in time provided that $\| w \widehat{Z} f \|_{L^1_{x,v}}$ does not grow too fast,
\item $\Omega_{kl} S \phi$ vanishes since $S \phi$ is spherically symmetric,
\end{itemize}
one can expect to prove boundedness for $\| S f \|_{L^1_{x,v}}$ and, more generally, $\| \widehat{Z}^{\beta} f \|_{L^1_{x,v}}$.
\begin{Rq}
It is the combination of three difficulties which explains why our proof does not work for non spherically symmetric solutions. In order to expose them, let us return to our simplified system \eqref{simplepb}-\eqref{simplepb2}. The first problem is related to the weak decay rate of $\int_{\R^3_v} g dv$ and its derivatives near the light cone, leading, through standard elliptic estimate, to
\begin{equation}\label{eq:intro0}
 \| \nabla_x \psi \|^2_{L^2(\R^3)}(t) \hspace{2mm} \lesssim \hspace{2mm}  \| q^{-1} \psi \|_{L^2(\R^3)}(t) \left\| q \int_{\R^3} g dv \right\|_{L^2(\R^3)}(t) ,
 \end{equation}
where $q$ is a well chosen function of $(t-|x|,t+|x|)$. One can then obtain\footnote{This estimate is optimal in the presence of a non zero total charge, as in that case $\nabla_x \phi$ cannot decay faster than $r^{-2}$.} $\| \nabla_x \psi \|_{L^2(\R^3)} \lesssim (1+t)^{- \frac{1}{2}}$ by applying a Hardy type inequality such as the one of Appendix $B$ of \cite{LR}  and by taking advantage of the strong decay rate in $t-|x|$ of the Vlasov field. This would lead, using a Klainerman-Sobolev type inequality, to the following pointwise decay estimate on $\psi$,
$$ | \nabla_x \psi |(t,x) \hspace{2mm}\lesssim \hspace{2mm} \frac{1}{(1+|x|)\sqrt{1+t}},$$
which is much worse than the one obtained by our method in the spherically symmetric case. A second difficulty arises from the terms such as $\Omega_{kl} S \psi$, which do not vanish anymore. 

One way to deal with these issues could be 
\begin{itemize}
\item to use that $(1+|t-r|)^{-1}\Omega_{kl} S \psi$ behaves better than $r \nabla_x S \psi$ and absorb the $|t-r|$ weight in a Vlasov energy norm.
\item To prove boundedness on the Vlasov field using modification of the commutation vector fields of $\VV$ and certain hierarchies in the commuted equations in the spirit of \cite{FJS3} for the Einstein-Vlasov system and \cite{dim3} for the Vlasov-Maxwell system.
\end{itemize} 
Unfortunately, at the top order, we cannot fully use the null structure of the system since we do not control $ \sum_{j < k} \Omega_{jk} Z^{\gamma} \phi$, where $|\gamma|=N$ and $N$ is the maximal order of commutation. This additional difficulty then prevents us to close the energy estimates, even with the refinement of our method mentioned above. 
\end{Rq}

\subsection*{Basic notations}

In this paper we work on the $3+1$ dimensional Minkowski spacetime $(\R^{3+1},\eta)$. We will use two sets of coordinates, the Cartesian $(x^0=t,x^1,x^2,x^3)$, in which $\eta=diag(-1,1,1,1)$, and polar coordinates $(t,r,\omega_1,\omega_2)$, which are defined globally on $\R^{3+1}$ apart from the usual degeneration of spherical coordinates and at $r=0$. The hypersurface of constant $t$, for $t \geq 0$, will be denoted by
$$ \Sigma_t \hspace{2mm} := \hspace{2mm} \{ (s,x) \in \R_+ \times \R^3 \hspace{1mm} / \hspace{1mm} s=t \}.$$
In order to measure the decay rate of the velocity average of the Vlasov field, it will be convenient to use the weights
$$\tau_+:= \sqrt{1+(t+r)^2} \hspace{8mm} \text{and} \hspace{8mm} \tau_-:= \sqrt{1+(t-r)^2}.$$
We denote by $(e_1,e_2)$ an orthonormal basis on the spheres and by $\slashed{\nabla}$ the intrinsic covariant differentiation on the spheres $(t,r)=constant$. Capital Latin indices (such as $A$ or $B$) will always correspond to spherical variables. 

The velocity vector $(v^{\mu})_{0 \leq \mu \leq 3}$ is parametrized by $(v^i)_{1 \leq i \leq 3}$ and $v^0=|v|$ since we study massless particles. We denote the spherical coordinates of the velocity vector by $(v^0,v^r,v^{\omega_1},v^{\omega_2})$ and we denote by $\slashed{v}$ its spherical projection. We then have
$$\T =  v^{\mu} \partial_{\mu} , \hspace{1cm} v^r = x^i v_i, \hspace{1cm} v^{\omega_i} = <v,e_i> \hspace{1cm} \text{and} \hspace{1cm} |\slashed{v}|^2= v^{\omega_1} v_{\omega_1}+v^{\omega_2} v_{\omega_2}=v^A v_A.$$
We also introduce a spherical frame in $v$ $(\partial_{|v|}, \partial_{\theta^1_v}, \partial_{\theta^2_v})$. Let us consider an ordering on each of the sets $\Vv$ and $\VV$, so that
$$\Vv \hspace{2mm} = \hspace{2mm} \{ Z^i, \hspace{2mm} 1 \leq i \leq |\Vv| \} \hspace{1.2cm} \text{and} \hspace{1.2cm} \VV \hspace{2mm} = \hspace{2mm} \{ \widehat{Z}^i, \hspace{2mm} 1 \leq i \leq |\Vv| \}.$$
For multi-indices $\gamma \in \llbracket 1 , |\Vv | \rrbracket^p$ and $\beta \in \llbracket 1, |\VV| \rrbracket^q$, with $(p,q) \in \mathbb{N}^2$, we define $Z^{\gamma}$ and $\widehat{Z}^{\beta}$ such as
$$ Z^{\gamma} \hspace{2mm} := \hspace{2mm} Z^{\gamma_1}... Z^{\gamma_p} \hspace{1.2cm} \text{and} \hspace{1.2cm} \widehat{Z}^{\beta} \hspace{2mm} := \hspace{2mm} \widehat{Z}^{\beta_1}... \widehat{Z}^{\beta_q}.$$
If $Z \in \Vv \setminus \{S \}$, we will denote by $\widehat{Z} \in \VV$ the complete lift of $Z$, i.e.
\begin{equation}\label{complelift}
 \widehat{\partial_{\mu}} \hspace{2mm} = \hspace{2mm} \partial_{\mu} \hspace{1.2cm} \text{and} \hspace{12mm}  \widehat{\Omega_{ij}} = x^i \partial_j-x^j \partial_i+v^i \partial_{v^j}-v^j \partial_{v^i} = \widehat{\Omega}_{ij}.
\end{equation}
The subset
$$\Vv_{\Sp} \hspace{2mm} := \hspace{2mm} \{ \partial_t, \Omega_{12}, \Omega_{13}, \Omega_{23} , S \} \hspace{1cm} \text{(respectively} \hspace{4mm} \VV_{\Sp} \hspace{2mm} := \hspace{2mm} \{ \partial_t , \widehat{\Omega}_{12}, \widehat{\Omega}_{13}, \widehat{\Omega}_{23} , S, S_v \} \hspace{1mm} \text{)}$$
of $\Vv$ (respectively $\VV$) contains the commutation vector fields which preserve the spherical symmetry of a function of $x$ (respectively $(x,v)$). We refer to Subsection \ref{subsecsphrisymm} for more details.
\begin{Rq}
We point out that even if we denote by $\widehat{Z}^{\beta}$ a combination of $|\beta|$ vector fields of $\VV$, $S$ and $S_v$ are not the complete lifts of a vector field $X^{\mu} \partial_{\mu}$. We make this choice of notation for simplicity. For more details on the commutation properties between the complete lifts of Killing vector fields and Vlasov equations, we refer to \cite{FJS}, Section $2.7$ and Appendix $C$. For the purpose of this article, the information given by \eqref{complelift} will be sufficient.
\end{Rq}
As the Vlasov field will be defined on $[0,T[ \times \R^3 \times  \left( \R^3_v \setminus \{ 0 \} \right)$, we will use the notations $\R^3_x$ and $\R^3_v$ to denote respectively $\R^3$ and $\R^3 \setminus \{0\}$. We will also need to use $\chi : \R \rightarrow [0,1]$, a cutoff function satisfying
\begin{equation}\label{defchi}
 \chi = 0 \hspace{5mm} \text{on $]- \infty, \frac{1}{2}]$} \hspace{1cm} \text{and} \hspace{1cm} \chi =1  \hspace{5mm} \text{on $[1,+ \infty [$}.
\end{equation}
The notation $D_1 \lesssim D_2$ will be used for an inequality such as $ D_1 \leq C D_2$, where $C>0$ is a constant depending only on $N \in \mathbb{N}$, the maximal order of commutation, and $\delta >0$, a small number. We will raise and lower indices using the Minkowski metric $\eta$. For instance, $x_{\mu} = x^{\nu} \eta_{\nu \mu}$ so that $x_0=-t$ and $x_1=x^1$. Finally, for all sufficiently regular function $\phi : [0,T[ \times \R^3_x \rightarrow \R$, we introduce the operator $\T_{\phi}$ defined for all sufficiently regular function $f : [0,T[ \times \R^3_x \times \R^3_v \rightarrow \R$ by
$$ \T_{\phi} : f \mapsto v^{\mu} \partial_{\mu} f+\sigma v^0 \nabla_x \phi \cdot \nabla_v f.$$

\subsection*{Structure of the paper}

In section \ref{sec2phi}, we present our vector field method for massless relativistic transport equations  which does not rely on the Lorentz invariance of the operator $\T$. Section \ref{sec3phi} contains the required energy estimates and commutation formula for our study of the massless relativistic Vlasov-Poisson system. We also present our strategy in order to deal with the problems caused by the small velocities and a non zero total charge. In Section \ref{sec4phi}, we set up the bootstrap assumptions and present the strategy of the proof. In Section \ref{sec5phi}, we prove pointwise decay estimates as well as $L^2$ estimates on the potential $\phi$ and then that the particle density vanishes for small velocities. Finally, the bootstrap assumptions, which only concern the Vlasov field, are improved in Sections \ref{Secbootf} and \ref{secL2}.

\section{Pointwise decay estimates}\label{sec2phi}

\subsection{Improved decay properties given by the weights preserved by $\T$}

As mentioned earlier, the use of the weights $x^i-\frac{v^i}{v^0}t$ could be avoided (see also Remark \ref{Lorentzweight}) and we then introduce the set $\mathbf{k} := \mathbf{k}_0 \setminus \{ z_{0k}, \hspace{1mm} 1 \leq k \leq 3 \}$. For convenience, let us also introduce the weight $z$ defined by
\begin{equation}\label{defz}
 z^2 \hspace{2mm} := \hspace{2mm} \sum_{w \in \V_0} w^2 .
\end{equation}
We start by a result which illustrates the good interactions between the weights of $\V_0$ and the operator $\T$ or the commutation vector fields of $\VV$.
\begin{Lem}\label{weightpreserv}
The following properties hold.
\begin{enumerate}
\item For all $w \in \V_0$, $w$ is preserved by $\T$. More precisely, $\T(w)=0$.
\item $\V$ is preserved by the action of $\VV$. More precisely, for all $\widehat{Z} \in \VV$ and $w \in \V$, $\widehat{Z}(v^0w) \in \pm v^0\V \cup \{0\}$. Similarly, for all $\widehat{Z} \in \VV$ and $w \in \V_0$, $\widehat{Z}(v^0w) \in \pm v^0 \V_0 \cup \{0\}$.
\end{enumerate}
\end{Lem}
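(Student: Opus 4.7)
The statement is entirely verifiable by direct computation, so my plan is simply to organize the case checks by first rewriting each weight in a form that makes the action of $\mathbf{T}$ and of the elements of $\widehat{\mathbb{V}}$ transparent. Recall $\mathbf{T}(x^\mu)=v^\mu$ with the conventions $x^0=t$, $v^0=|v|$, and $\mathbf{T}$ carries no $v$-derivative; in particular $\mathbf{T}(v^\mu/|v|)=0$ trivially. The essential point for Part 1 is that each remaining weight is engineered so that the two terms on the right of $\mathbf{T}=|v|\partial_t+v^i\partial_i$ exactly cancel.

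For Part 1, I would write out:
\begin{align*}
\mathbf{T}(s) &= \mathbf{T}(t) - \mathbf{T}(x^i)\,\tfrac{v_i}{|v|} = |v| - v^i\tfrac{v_i}{|v|} = 0,\\
\mathbf{T}(z_{ij}) &= v^i\tfrac{v^j}{|v|} - v^j\tfrac{v^i}{|v|} = 0,\\
\mathbf{T}(z_{0k}) &= v^k - |v|\tfrac{v^k}{|v|} = 0,
\end{align*}
using $v_i=v^i$ with our conventions. This disposes of Part 1.

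For Part 2, I would first multiply through by $v^0$ to land on polynomial expressions:
$$ v^0\tfrac{v^\mu}{|v|}=v^\mu,\qquad v^0 s = v^0 t - x^i v_i,\qquad v^0 z_{ij}=x^i v^j - x^j v^i,\qquad v^0 z_{0k}=v^0 x^k - t\, v^k.$$
The claim then follows by checking each of the nine commutators against these four polynomials. The translations $\partial_t,\partial_i$ annihilate $v^\mu$ and $|v|$ and differentiate at most one factor of $t$ or $x^j$; all that arises is $0$, $\pm v^\mu$, or $\pm v_i$, each of which is $\pm v^0$ times an element of $\mathbf{k}$. For the scaling $S$, each of $v^0s$, $v^0 z_{ij}$, $v^0 z_{0k}$ is homogeneous of degree one in $(t,x^i)$, so Euler's relation yields $S(v^0w)=v^0 w$; and $S(v^\mu)=0$. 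For $S_v=v^j\partial_{v^j}$, a similar homogeneity observation in the $v$ variables (using $S_v(|v|)=|v|$ and $S_v(v^j)=v^j$) gives $S_v(v^0 w)=v^0 w$ on each of $v^0s$, $v^0 z_{ij}$, $v^0 z_{0k}$, and $S_v(v^\mu)=v^\mu$ for all $\mu\in\{0,\dots,3\}$.

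The only slightly delicate case is $\widehat{\Omega}_{ij}$, which I would handle by noting that it is a simultaneous rotation of $x$ and $v$ that annihilates $|v|$ and $t$ and preserves the pairing $x^kv_k$. From this one deduces $\widehat{\Omega}_{ij}(v^0 s)=0$ immediately, while on $v^\mu$, $v^0 z_{kl}$, $v^0 z_{0k}$ the computation reduces to contracting $\delta^{\cdot}_\cdot$ symbols; concretely $\widehat{\Omega}_{ij}(x^k)=x^i\delta^k_j-x^j\delta^k_i$ and the identical formula with $v$, which shows that each image is either $0$ or a single $\pm v^0 z_{\cdot\cdot}$ or $\pm v^\mu$ after relabeling indices. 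I expect this index bookkeeping for the rotations to be the main—but still routine—obstacle, and I would present just one representative calculation (say $\widehat{\Omega}_{12}(v^0 z_{13})=-v^0 z_{23}$) to exhibit the pattern. Both the $\mathbf{k}$ and $\mathbf{k}_0$ versions then follow: since no case maps a weight from $\mathbf{k}$ into a $z_{0k}$ term, the subset $\mathbf{k}$ is stable; and the full $\mathbf{k}_0$ is stable because rotations map $z_{0k}$ among themselves.
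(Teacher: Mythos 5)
Your proof is correct and takes essentially the same approach as the paper, namely direct verification by elementary computation. The paper's own proof is much terser (it states the result follows from straightforward computations and only displays $\T(s)=0$ and $S_v(s)=0$ as samples), whereas you organize the full case check more systematically; in particular your observation that $\widehat{\Omega}_{ij}$ annihilates $t$, $|v|$, and $x^kv_k$, together with the Euler-relation argument for $S$ and $S_v$, streamlines the bookkeeping.
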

\begin{proof}
This follows from straightforward computations. For instance,
$$ \T ( s) = v^{\mu} \partial_{\mu} \left( t-x^i \frac{v_i}{v^0} \right)=v^0-v^i\frac{v_i}{v^0} =0 \hspace{1cm} \text{and} \hspace{1cm} S_v(s)=v^i \partial_{v^i} \left( t-x^j \frac{v_j}{v^0} \right) = -x^i \frac{v_i}{v^0}+x^j\frac{v_j v^i v_i}{|v^0|^3} = 0.$$
\end{proof}

The following properties will be useful in order to gain decay using the weights of $\V_0$. 

\begin{Lem}\label{weightdecay}
We have
$$ \forall \hspace{0.5mm} t \geq |x|, \hspace{3mm} \tau_- \leq 1+|s|, \hspace{1cm}  \forall \hspace{0.5mm} t \leq |x|, \hspace{3mm} \tau_- \leq 1 +\sum_{i=1}^3 |z_{0i}| \hspace{1cm} \text{and} \hspace{1cm} \tau_+\frac{|\slashed{v}|}{v^0} \lesssim 1+|s|+\sum_{1 \leq i < j \leq 3} |z_{ij}|.$$
\end{Lem}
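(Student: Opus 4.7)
The strategy is that all three bounds reduce to elementary algebraic manipulations, relying on the fact that $v/v^0$ is a unit vector in $\mathbb{R}^3$ since $v^0 = |v|$; in particular Cauchy--Schwarz gives $|x^i v_i| \leq r v^0$. The first inequality is then immediate: when $t \geq r$, the definition of $s$ yields $s = t - x^i v_i/v^0 \geq t - r \geq 0$, hence $|t-r| \leq |s|$ and $\tau_- = \sqrt{1+(t-r)^2} \leq \sqrt{1+s^2} \leq 1 + |s|$.

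For the second inequality, I would expand the sum of squares:
\begin{equation*}
\sum_{i=1}^3 z_{0i}^2 \;=\; \sum_{i=1}^3 \left( x^i - t \frac{v^i}{v^0} \right)^2 \;=\; r^2 + t^2 - 2 t \frac{x^i v_i}{v^0} \;\geq\; r^2 - 2tr + t^2 \;=\; (r-t)^2,
\end{equation*}
so that $|r-t| \leq \sqrt{\sum_i z_{0i}^2} \leq \sum_i |z_{0i}|$. Since $t \leq r$ in this regime, the bound $\tau_- \leq 1 + \sum_i |z_{0i}|$ follows.

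The third inequality is the main content. The first step is to apply Lagrange's identity to convert the rotational weights into the angular velocity component:
\begin{equation*}
\sum_{1 \leq i < j \leq 3} \left( x^i v^j - x^j v^i \right)^2 \;=\; |x|^2 |v|^2 - (x \cdot v)^2 \;=\; r^2 (v^0)^2 - (x^i v_i)^2 \;=\; r^2 |\slashed{v}|^2,
\end{equation*}
where the last equality uses the orthogonal decomposition $(v^0)^2 = (x^i v_i)^2/r^2 + |\slashed{v}|^2$. Dividing by $(v^0)^2$ gives $r|\slashed{v}|/v^0 \leq \sum_{i < j} |z_{ij}|$. I would then split into two regimes: if $t \leq 2r$, then $\tau_+ \lesssim 1 + r$, and combining with $|\slashed{v}| \leq v^0$ closes the estimate; if $t > 2r$, then $t - r > t/2$, so $t + r \leq 3(t-r)$ and thus $\tau_+ \lesssim 1 + (t-r) \lesssim \tau_- \leq 1 + |s|$ by the first inequality of the lemma, after which $|\slashed{v}|/v^0 \leq 1$ again suffices. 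No step is a genuine obstacle; the only mildly non-trivial observation is the use of Lagrange's identity, which embodies the geometric content of the statement by tying the rotational weights $z_{ij}$ directly to $|\slashed{v}|$.
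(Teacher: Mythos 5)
Your proof is correct. The first two inequalities are proved in essentially the same way as the paper (the paper directly bounds $s \geq t - |x|$ and $\sum|z_{0i}| \geq |x - tv/v^0| \geq |x| - t$, while you expand the sum of squares, but these are the same content). For the third inequality you take a cleaner route. The paper establishes three separate bounds on $|\slashed{v}|/v^0$: the trivial one, a decay-in-$r$ bound obtained from the frame relation $r\hspace{0.3mm}v_A = C_A^{i,j}\hspace{0.3mm}v^0 z_{ij}$, and a decay-in-$t$ bound obtained by inserting the algebraic identity $\tfrac{1}{r} = \tfrac{s}{tr} + \tfrac{x^k v_k}{tr v^0}$ into the second. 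You instead get the $r$-decay bound as an \emph{exact} identity via Lagrange, $\sum_{i<j} z_{ij}^2 = r^2 |\slashed{v}|^2 / (v^0)^2$, and then avoid the $t$-decay bound entirely by splitting $t \leq 2r$ from $t > 2r$: in the first regime $\tau_+ \lesssim 1 + r$ and the Lagrange bound closes the estimate; in the second $\tau_+ \lesssim \tau_-$ and the already-proved first inequality of the lemma takes over. The two proofs buy the same thing, but yours substitutes a case analysis for the algebraic decomposition of $\tfrac{1}{r}$, which makes the role of the weight $s$ more transparent and eliminates the need to track the bounded coefficients $C_A^{i,j}$.
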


\begin{proof}
The first two inequalities ensue from
$$ s =t-x^i\frac{v_i}{v^0} \geq t-|x| \hspace{1cm} \text{and} \hspace{1cm} \sum_{i=1}^3 |z_{0i}| \geq \left| x-t\frac{v}{v^0} \right| \geq |x|-t.$$
For the last one, we have $|\slashed{v}|^2=v^Av_A$ and $rv_A=<v,re_A>=C_A^{i,j}<v,\Omega_{ij}>=C_A^{i,j} z_{ij}$, where $C_A^{i,j}$ are bounded functions depending only on the spherical variables $(\omega_1,\omega_2)$. Consequently,
$$ |\slashed{v}| \leq v^0, \hspace{10mm} |\slashed{v}| \leq \frac{1}{r} \sum_{i<j} |z_{ij}| \hspace{10mm} \text{and} \hspace{10mm} |\slashed{v}| \leq  \sum_{i<j} \frac{s}{t}\frac{|z_{ij}|}{r}+\frac{x^kv_k}{trv^0}|z_{ij}| \lesssim \frac{|s|}{t}+\sum_{i<j} \frac{|z_{ij}|}{t}.$$
\end{proof}
\begin{Rq}
Note also that the Morawetz weight $\mathfrak{m}:=(t^2+r^2)\frac{v_0}{v^0}+2tx^i\frac{v_i}{v^0}$ satisfies also $\T( \mathfrak{m} )=0$. We will not use it in this article but we point out that it can be used to derive strong improved decay estimates in the null direction as
$$ v^0 - \frac{x^i}{r}v_i \hspace{2mm} \lesssim \hspace{2mm} \frac{v^0}{(t+r)^2} | \mathfrak{m} |.$$
In \cite{dim4} or \cite{massless}, we only used the inequality $v^0 - \frac{x^i}{r}v_i \hspace{2mm} \lesssim \hspace{2mm} \frac{v^0}{t+r} z$.
\end{Rq}
These two lemmas directly imply that
\begin{Lem}\label{zprop}
$\T(z)=0$. Moreover,
$$\forall \hspace{0.5mm} (\ZZ , p) \in \VV \times \R_+, \hspace{2cm} |\ZZ (z^p) | \lesssim z^p \hspace{1cm} \text{and} \hspace{1cm} \forall \hspace{0.5mm} (t,x) \in \R_+ \times \R^3, \hspace{1cm} \tau_- \leq z.$$
\end{Lem}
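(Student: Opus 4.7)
The plan is to settle the three assertions in turn, each following from the preceding lemmas together with short direct computations.

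For $\T(z) = 0$, the starting point is to differentiate $z^2$:
$$ \T(z^2) = \sum_{w \in \V_0} 2 w \, \T(w) = 0, $$
by Lemma \ref{weightpreserv}(1). Since $v^0 = |v|$ forces $(v^0/|v|)^2 = 1 \in \V_0$, we get $z^2 \geq 1$, hence $z \geq 1 > 0$; dividing $\T(z^2) = 2 z \T(z)$ by $2z$ yields $\T(z) = 0$.

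For the second assertion, the chain rule gives $\widehat{Z}(z^p) = \tfrac{p}{2} z^{p-2} \widehat{Z}(z^2)$, so it is enough to show $|\widehat{Z}(z^2)| \lesssim z^2$. By Lemma \ref{weightpreserv}(2), for each $w \in \V_0$, $\widehat{Z}(v^0 w)$ is either $0$ or equals $\pm v^0 w'$ for some $w' \in \V_0$. Expanding via Leibniz and dividing by $v^0$,
$$ \widehat{Z}(w) = \pm w' - \frac{\widehat{Z}(v^0)}{v^0} w. $$
A direct inspection of the vector fields in $\VV$ shows that $\widehat{Z}(v^0)/v^0 \in \{0, 1\}$: only $S_v$ gives a nonzero contribution, namely $S_v(v^0) = v^i v_i/|v| = v^0$. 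Therefore $|\widehat{Z}(w)| \leq |w'| + |w| \lesssim z$, so $|w\,\widehat{Z}(w)| \lesssim z^2$, and summing over $w \in \V_0$ produces $|\widehat{Z}(z^2)| \lesssim z^2$.

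For $\tau_- \leq z$, I split into two cases using that $z^2 \geq 1$. If $t \geq r$, the weight $s = t - x^i v_i/v^0$ satisfies $s \geq t - r \geq 0$, hence $(t-r)^2 \leq s^2 \leq z^2 - 1$, giving $\tau_-^2 \leq z^2$. If $t < r$, a direct Cauchy--Schwarz computation gives
$$ \sum_{k=1}^{3} z_{0k}^2 = r^2 - 2 t \frac{x^i v_i}{v^0} + t^2 \geq r^2 - 2 t r + t^2 = (r-t)^2, $$
so once again $\tau_-^2 = 1 + (t-r)^2 \leq z^2$.

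The main obstacle is the intermediate step: one must translate Lemma \ref{weightpreserv}(2), which controls $\widehat{Z}(v^0 w)$, into a bound on $\widehat{Z}(w)$ itself. The key observation that unlocks everything is that $\widehat{Z}(v^0)/v^0$ only ever takes the values $0$ or $1$ across $\VV$, so the correction term produced by Leibniz has the same size as the leading one.
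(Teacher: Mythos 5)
Your proof is correct and fills in exactly the details the paper leaves to the reader (the paper just says "These two lemmas directly imply that"). The first two steps differentiate $z^2$ and invoke Lemma \ref{weightpreserv} as intended, and your explicit observation that $\widehat{Z}(v^0)/v^0 \in \{0,1\}$ over $\VV$ (with $S_v$ as the only nonzero contributor) is the right way to pass from control of $\widehat{Z}(v^0w)$ to control of $\widehat{Z}(w)$. For the inequality $\tau_- \leq z$, your route is actually slightly cleaner than citing Lemma \ref{weightdecay} verbatim: that lemma gives $\tau_- \leq 1+|s|$ and $\tau_- \leq 1+\sum_i|z_{0i}|$, which on its own only yields $\tau_- \lesssim z$ with a constant, whereas your sum-of-squares version ($s\geq t-r\geq 0$ in the inner region, and $\sum_k z_{0k}^2 = |x - tv/v^0|^2 \geq (r-t)^2$ in the outer region) delivers the exact inequality $\tau_-^2 \leq 1 + (t-r)^2 \leq z^2$ stated in the lemma, with no implicit constant. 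That is precisely the computation the paper's own proof of Lemma \ref{weightdecay} uses under the hood, so you have not departed from the intended argument, just presented it in the sharper form. One minor point: the constant implicit in $|\widehat{Z}(z^p)|\lesssim z^p$ depends on $p$ through the chain-rule factor $p/2$; this is harmless since only finitely many values of $p$ (at most $5$) are used, but it is worth noting given the paper's convention that $\lesssim$ constants depend only on $N$ and $\delta$.
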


In Section $5$ of \cite{dim4}, we proved that $Y_{ij}:=v^i \partial_j-v^j \partial_i$ is a null form. The purpose of the following lemma is to recover this result without using the Lorentz boosts.
\begin{Pro}\label{null}
Let $u : [0,T[ \times \R^3_x \rightarrow \R $ be a sufficiently regular function and $1 \leq i < j \leq 3$. We have
$$ | Y_{ij} u | \hspace{2mm} = \hspace{2mm} \left| \frac{v^i}{v^0} \partial_ju-\frac{v^j}{v^0} \partial_iu \right| \hspace{2mm} \lesssim \hspace{2mm} \frac{1}{\tau_+} \left(\sum_{ k < l} | \Omega_{kl} u |+\sum_{w \in \V} |w \nabla_x u| \right).$$
\end{Pro}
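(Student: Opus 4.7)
The plan is to bound $\tau_+|Y_{ij} u|$ by handling the $1$, $r$ and $|s|$ contributions separately, exploiting the elementary inequality $\tau_+ \lesssim 1+r+|s|$. The latter comes from the definition $s=t-x^i v_i/v^0$, which gives $t \leq |s|+|x^i v_i/v^0| \leq |s|+r$, hence $t+r \leq |s|+2r$ and $\tau_+ = \sqrt{1+(t+r)^2} \lesssim 1+|s|+r$. It will then suffice to bound $|Y_{ij} u|$, $|s|\,|Y_{ij}u|$ and $r|Y_{ij}u|$ individually by the right-hand side of the claimed inequality.

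The first two bounds are immediate. Since $|v^i/v^0|,|v^j/v^0|\leq 1$, one has $|Y_{ij} u| \leq 2|\nabla_x u|$, and both $1 = v^0/v^0$ and $s$ lie in $\V$, so $|\nabla_x u|$ and $|s \nabla_x u|$ are controlled by $\sum_{w\in\V}|w\nabla_x u|$.

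For the $r|Y_{ij} u|$ bound, which is the main point, I would use the standard decomposition of $\partial_i$ into a radial part and tangential parts via the spatial rotations: for $r>0$,
\[
r^2 \partial_i \;=\; x^i (x^k\partial_k)\;-\;\sum_k x^k \Omega_{ik}.
\]
Substituting this into $r^2 Y_{ij} u = r^2(v^i/v^0)\partial_j u - r^2(v^j/v^0)\partial_i u$, the radial pieces combine into $(v^i x^j - v^j x^i)(x^k\partial_k u)/v^0 = -z_{ij}\, x^k\partial_k u$, so
\[
r^2 Y_{ij} u \;=\; -z_{ij}\,(x^k\partial_k u)\;-\;\sum_k x^k\!\left(\frac{v^i}{v^0}\Omega_{jk}-\frac{v^j}{v^0}\Omega_{ik}\right)\! u .
\]
Dividing by $r$ and using $|x^k/r|\leq 1$, $|v^i/v^0|\leq 1$ and $z_{ij}\in\V$ yields $r|Y_{ij}u| \lesssim |z_{ij}|\,|\nabla_x u|+\sum_{k<l}|\Omega_{kl}u|\lesssim \sum_{w\in\V}|w\nabla_x u|+\sum_{k<l}|\Omega_{kl} u|$, which extends trivially to $r=0$. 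Combining this with the two earlier bounds and the inequality $\tau_+\lesssim 1+r+|s|$ gives the proposition.

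The main difficulty is conceptual rather than computational: without the Lorentz boosts $\widehat{\Omega}_{0k}$, there is no direct way to convert a factor of $t$ in front of $\partial_i u$ into admissible objects. The key insight making the argument work is that the weight $s$ plays exactly the role of the missing boost in the region $r\ll t$, since $|s|\gtrsim t-r$ there and $s\in\V$; in the complementary region $r\gtrsim t$ the spatial rotations and the weight $z_{ij}$ alone suffice. This is precisely why the weighted norm $\sum_{w\in\V}|w\nabla_x u|$ is needed on the right-hand side, and it illustrates the general philosophy of the paper: the weight $s$ is the appropriate substitute for the Lorentz boosts in the massless transport setting.
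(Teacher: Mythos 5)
Your proof is correct, and it takes a genuinely different route from the one in the paper. The paper fixes $(t,x)$, dismisses the region $t+|x|\le 1$, and then proceeds in two stages: it writes $v^1\partial_2-v^2\partial_1$ in three different ways (one for each coordinate $x^k$), picks the $k$ with $|x^k|\gtrsim r$ to obtain $|Y_{12}u|\lesssim r^{-1}\bigl(\sum_{k<l}|\Omega_{kl}u|+|z_{12}\nabla_xu|\bigr)$, and finally trades $1/r$ for $1/t$ through $t=s+r\,v^r/v^0$ together with $|z_{kl}|/r\le 2$ and $|\Omega_{kl}u|/r\le 2|\nabla_x u|$. Your argument reorganizes this cleanly: the elementary bound $\tau_+\lesssim 1+r+|s|$ absorbs the case distinction between $r\gtrsim t$ and $r\lesssim t$, and the single rotational identity $r^2\partial_i=x^i(x^k\partial_k)-\sum_k x^k\Omega_{ik}$ replaces the three coordinate-dependent decompositions while making the weight $z_{ij}$ emerge in one stroke from the antisymmetric radial combination $(v^ix^j-v^jx^i)/v^0=-z_{ij}$. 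Both proofs rest on the same structural facts -- the rotations span the angular part of $r\nabla_x$, the scaling weight $s$ substitutes for the missing Lorentz boost, and $z_{ij}$ appears from the antisymmetry of $Y_{ij}$ -- so neither is more general; what your version buys is lighter bookkeeping (no reduction to $(i,j)=(1,2)$, no choice of dominant coordinate, no two-region split), at the cost of having to observe separately that the constant $1=v^0/v^0$ belongs to $\V$ so that the unweighted term $|\nabla_x u|$ is admissible.
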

\begin{proof}
Without loss of generality, we suppose that $(i,j)=(1,2)$. Let us fix $(t,x) \in [0,T[ \times \R^3$ and note that the inequality is straightforward if $t+|x| \leq 1$. Now, remark that
\begin{eqnarray}
\nonumber v^1 \partial_2-v^2 \partial_1 & = & \frac{1}{x^3} \left(-v^1 \Omega_{23}+v^1x^2 \partial_3+v^2 \Omega_{13}-v^2x^1 \partial_3 \right) = \frac{1}{x^3} \left(-v^1 \Omega_{23}+v^2 \Omega_{13}-v^0z_{12} \partial_3 \right), \\ \nonumber
 v^1 \partial_2-v^2 \partial_1 & = & \frac{1}{x^1} \left(v^1 \Omega_{12}+v^1x^2 \partial_1-v^2 x^1 \partial_1 \right) = \frac{1}{x^1} \left(v^1 \Omega_{12}-v^0z_{12} \partial_1 \right), \\ \nonumber
 v^1 \partial_2-v^2 \partial_1 & = & \frac{1}{x^2} \left(v^2 \Omega_{12}-v^2x^1 \partial_2+v^1 x^2 \partial_2 \right) = -\frac{1}{x^2} \left(v^2 \Omega_{12}-v^0z_{12} \partial_2 \right).
\end{eqnarray}
As there exists $1 \leq k \leq 3$ such that $x^k \geq \frac{1}{\sqrt{3}}|x|$, we obtain
\begin{equation}\label{eq:inr}
 \left| \frac{v^1}{v^0} \partial_2 u-\frac{v^2}{v^0} \partial_1 u \right| \hspace{2mm} \lesssim \hspace{2mm} \frac{1}{r} \left(\sum_{k < l} | \Omega_{kl} u |+ |z_{kl} \nabla_x u| \right).
 \end{equation}
Finally, the decay in the variable $t$ can be obtained combining \eqref{eq:inr} with
$$ \left|\frac{z_{kl}}{r} \right|=\left| \frac{sz_{kl}}{tr}+\frac{rv^rz_{kl}}{rtv^0} \right| \leq 2\frac{|s|}{t}+\frac{|z_{kl}|}{t} \hspace{5mm} \text{and} \hspace{5mm} \left|\frac{\Omega_{kl}}{r}u \right|=\left| \frac{s\Omega_{kl}}{tr}u+\frac{rv^r\Omega_{kl}}{rtv^0}u \right| \leq 2\frac{|s||\nabla u|}{t}+\frac{|\Omega_{kl} u|}{t}.$$
\end{proof}

\subsection{Decay estimates for velocity averages}
In this section we prove a functional inequality adapted for solutions to massless relativistic transport equations. We consider two sufficiently regular functions $g : [0,T[ \times \R^3_x \times \R^3_v \rightarrow \R$ and $\phi : [0,T[ \times \R^3_x \rightarrow \R$ and we start by proving a commutation property between our vector fields and the averaging in $v$.
\begin{Lem}\label{Comaverage}
$$ \forall \hspace{0.5mm} Z \in \Vv \setminus \{S \}, \hspace{1cm} \left| Z \int_{\R^3_v} |g| dv \right| \hspace{1mm} \leq \hspace{1mm} \int_{\R^3_v} |\ZZ g | dv \hspace{5mm} \text{and} \hspace{5mm} \left| S \int_{\R^3_v} |g| dv \right| \hspace{1mm} \leq \hspace{1mm} \int_{\R^3_v} |S g | dv .$$
\end{Lem}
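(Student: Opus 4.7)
The plan is to push $Z$ inside the velocity integral and, for the rotations, to exploit the fact that the extra velocity piece appearing in the complete lift is divergence-free in $v$. More precisely, every $Z \in \Vv$ acts only on the $(t,x)$-variables, so, assuming enough decay of $g$ and its first $v$-derivatives to justify differentiation under the integral, one first writes
\[
Z \int_{\R^3_v} |g| \, dv \hspace{2mm} = \hspace{2mm} \int_{\R^3_v} Z |g| \, dv.
\]
The a.e.\ pointwise chain rule $|X|g|| \leq |Xg|$, valid for any first-order differential operator $X$ on $(t,x,v)$, will be obtained in the standard way by differentiating $\sqrt{g^2 + \varepsilon^2}$ and letting $\varepsilon \to 0$. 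This reduces the lemma to comparing $Z|g|$ with the full lifted operator applied to $g$.

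For the translations $Z = \partial_{\mu}$ one has $\widehat{\partial_{\mu}} = \partial_{\mu}$, and for the scaling $Z = S$ the conventions of the paper treat $S$ in $\VV$ as the same vector field as in $\Vv$, with no velocity part. In both cases the preceding chain-rule bound already gives the desired inequality after integrating in $v$. For the rotations $Z = \Omega_{ij}$ one decomposes the complete lift as $\widehat{\Omega}_{ij} = \Omega_{ij} + \bigl(v^i \partial_{v^j} - v^j \partial_{v^i}\bigr)$, which yields
\[
\int_{\R^3_v} \Omega_{ij} |g| \, dv \hspace{2mm} = \hspace{2mm} \int_{\R^3_v} \widehat{\Omega}_{ij} |g| \, dv \hspace{1mm} - \hspace{1mm} \int_{\R^3_v} \bigl( v^i \partial_{v^j} - v^j \partial_{v^i} \bigr) |g| \, dv.
\]
The crucial observation is that $v^i \partial_{v^j} - v^j \partial_{v^i}$ is divergence-free on $\R^3_v$ (the only non-zero components of the vector field are $v^i$ in the $\partial_{v^j}$ direction and $-v^j$ in the $\partial_{v^i}$ direction, and $\partial_{v^j}(v^i) = \partial_{v^i}(-v^j) = 0$), so integration by parts in $v$ makes the second integral vanish, with no boundary contribution thanks to the $v$-decay of $g$. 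The first integral is then controlled by $\int_{\R^3_v} |\widehat{\Omega}_{ij} g| \, dv$ via the chain-rule inequality.

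The only genuine (and minor) technical obstacle is the non-smoothness of $|g|$ at its zero set, dealt with by the $\sqrt{g^2+\varepsilon^2}$ regularization mentioned above. The structurally important point, which is the reason the lemma is essentially free, is the divergence-free character of the velocity rotation $v^i \partial_{v^j} - v^j \partial_{v^i}$: this is what allows the additional velocity terms of the complete lift to be inserted inside the $v$-integral at no cost.
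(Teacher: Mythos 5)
Your proof is correct and follows essentially the same route as the paper's: push $Z$ inside the $v$-integral, split $\Omega_{ij} = \widehat{\Omega}_{ij} - (v^i\partial_{v^j}-v^j\partial_{v^i})$, integrate by parts to eliminate the divergence-free velocity rotation, and finish with the chain-rule bound $|X|g|| \leq |Xg|$. You are merely more explicit than the paper about the divergence-free structure and about the $\sqrt{g^2+\varepsilon^2}$ regularization that justifies the pointwise chain rule.
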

\begin{proof}
Consider for instance $\Omega_{ij}=\widehat{\Omega}_{ij}-v^i \partial_{v^j}+v^j \partial_{v^i}$. We have, integrating by parts in $v$,
$$  \Omega_{ij}\left( \int_{\R^3_v } |g| dv \right) = \int_{\R^3_v}  \widehat{\Omega}_{ij} (|g|) dv -\int_{\R^3_v}  \left( v^i\partial_{v^j} -v^j \partial_{v^i}  \right)(|g|) dv= \int_{\R^3_v}  \widehat{\Omega}_{ij} (|g|) dv$$
and it remains to note that $\widehat{\Omega}_{ij} (|g|)=\frac{g}{|g|} \widehat{\Omega}_{ij} g$ and to use the triangle inequality.
\end{proof}
The next Lemma contains two analogous formulas. The first one is simpler whereas the second one seems to be easier to adapt in the context of a curved background such as the Schwarzschild spacetime since no spatial translation are involved.
\begin{Lem}\label{angupart}
We have
$$ \left| \frac{v^i}{v^0} \partial_i (g) -\frac{v^r}{v^0} \partial_r (g) \right| \hspace{0.5mm} \lesssim \hspace{0.5mm} \frac{1}{r} \sum_{i < j} |z_{ij}| \left| \nabla_x g \right|, \hspace{1.8cm}  \left| \int_{\R^3_v}  \frac{v^i}{v^0} \partial_i (|g|) -\frac{v^r}{v^0} \partial_r (|g|) dv \right| \hspace{0.5mm} \lesssim \hspace{0.5mm} \frac{1}{r}  \sum_{i<j} \int_{ \R^3_v}|g|+ | \widehat{\Omega}_{ij} g | dv.$$
\end{Lem}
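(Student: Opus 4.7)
The plan is to extract the angular part of $\frac{v^i}{v^0}\partial_i$ and rewrite it using only rotation vector fields and weights $z_{ij}$. Decomposing $\partial_i = \slashed{\partial}_i + \frac{x_i}{r}\partial_r$, where $\slashed{\partial}_i := \partial_i - \frac{x_i}{r}\partial_r$ is tangent to the spheres and satisfies $x^i \slashed{\partial}_i = 0$, one sees immediately that
$$ \frac{v^i}{v^0}\partial_i - \frac{v^r}{v^0}\partial_r \;=\; \frac{v^i}{v^0}\slashed{\partial}_i.$$

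The key algebraic identity is $r^2 \slashed{\partial}_j = \sum_i x^i \Omega_{ij}$, which follows from the direct computation $x^i \Omega_{ij} = r^2 \partial_j - x^j x^i \partial_i = r^2 \partial_j - r x^j \partial_r = r^2 \slashed{\partial}_j$. Multiplying by $v^j/v^0$, summing over $j$, and antisymmetrizing using $\Omega_{ji}=-\Omega_{ij}$ together with $v^j x^i - v^i x^j = v^0 z_{ij}$, yields the pointwise identity
$$ \frac{v^j}{v^0}\slashed{\partial}_j g \;=\; \frac{1}{r^2}\sum_{i<j} z_{ij}\,\Omega_{ij} g.$$
The first assertion of the lemma then follows at once from $|\Omega_{ij} g| \leq 2r |\nabla_x g|$.

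For the second assertion I apply the same identity to $|g|$, integrate in $v$, and use the definition of the complete lift to write $\Omega_{ij}(|g|) = \widehat{\Omega}_{ij}(|g|) - (v^i \partial_{v^j} - v^j \partial_{v^i})(|g|)$. The $\widehat{\Omega}_{ij}$ contribution produces $r^{-2}\int |z_{ij}|\,|\widehat{\Omega}_{ij} g|\,dv \lesssim r^{-1}\int |\widehat{\Omega}_{ij} g|\,dv$ since $|z_{ij}| \leq 2r$ by Lemma~\ref{weightpreserv}. For the remaining piece I integrate by parts in $v$ (legitimate for sufficiently regular $g$), which transfers the velocity derivatives onto $z_{ij}\,v^i$ and $z_{ij}\,v^j$; a short computation shows $|\partial_{v^k}(z_{ij} v^\ell)| \lesssim r$ using the explicit bound $|\partial_{v^k}(v^\ell/v^0)| \lesssim 1/v^0$ and $|v^\ell/v^0| \le 1$. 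This contributes a term of size $r^{-2} \cdot r \int |g|\,dv = r^{-1}\int |g|\,dv$, and summing over $i<j$ gives the stated inequality.

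The only step requiring care is the last one, where one has to track the constants produced by differentiating $v^\ell/v^0$ and verify that the boundary contributions in the $v$-integration by parts vanish (this uses the regularity and decay of $g$ in $v$, as well as its vanishing near $v=0$ in the setting in which the lemma will be applied); every other manipulation is purely algebraic.
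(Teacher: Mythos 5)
Your proof is correct and takes essentially the same route as the paper: both express the angular part of $\frac{v^i}{v^0}\partial_i$ through $\frac{1}{r^2}\sum_{i<j}z_{ij}\Omega_{ij}$ (the paper phrases this via the spherical frame $(e_A)$ and the bounded coefficients $C_A^{ij}(\omega)$, whereas your exact identity $\frac{v^j}{v^0}\slashed{\partial}_j g=\frac{1}{r^2}\sum_{i<j}z_{ij}\Omega_{ij}g$ is equivalent and slightly cleaner), then bound $|\Omega_{ij}g|\lesssim r|\nabla_x g|$ for the first estimate and, for the second, expand $\Omega_{ij}=\widehat{\Omega}_{ij}-(v^i\partial_{v^j}-v^j\partial_{v^i})$ and integrate by parts in $v$ using $|\partial_{v^k}(z_{ij}v^\ell)|\lesssim r$ (the paper instead pulls $z_{ij}/r$ out first and uses $|\partial_{v^l}(v^mv^k/v^0)|\lesssim 1$, but this is the same computation). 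One small nit: the bound $|z_{ij}|\leq 2r$ is a direct consequence of the definition of $z_{ij}$, not of Lemma~\ref{weightpreserv}, which only records that the weights are preserved by $\T$ and by the vector fields of $\VV$.
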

\begin{proof}
For the first formula, Lemma \ref{weightdecay} gives us
$$\left| v^i \partial_i (g) -v^r \partial_r (g) \right| \hspace{2mm} = \hspace{2mm} \left| v^A e_A (g) \right|  \hspace{2mm} \lesssim \hspace{2mm} \frac{v^0}{r} \sum_{i < j} |z_{ij}| \cdot |\nabla_x g |.$$
For the second one, recall that $rv_A= C^{i,j}_A v^0z_{ij}$ and $r e_A = C_A^{i,j} \Omega_{ij}$, with $C^{i,j}_A$ bounded functions depending only on the spherical variables $(\omega_1,\omega_2)$. Hence,
\begin{eqnarray}
\nonumber \left| \int_{\R^3_v}  \frac{v^i}{v^0} \partial_i (|g|) -\frac{v^r}{v^0} \partial_r (|g|) dv \right| & = & \left| \int_{\R^3_v}  \frac{v^A}{v^0} e_A (|g|)  dv \right| \hspace{2mm} \lesssim \hspace{2mm} \frac{1}{r} \sum_{i < j} \sum_{k<l} \left| \int_{\R^3_v}  \frac{z_{ij}}{r} \Omega_{kl} (|g|)  dv \right| \\ \nonumber 
& \lesssim & \frac{1}{r} \sum_{i=1}^3 \sum_{k<l} \left| \int_{\R^3_v} \frac{v^i}{v^0}  \widehat{\Omega}_{kl} (|g|)  dv \right|+ \frac{1}{r} \sum_{i =1}^3 \sum_{k<l} \left| \int_{\R^3_v}  \frac{v^i}{v^0} (v^k \partial_{v^l}-v^l \partial_{v^k}) (|g|)  dv \right|.
\end{eqnarray}
It remains to integrate by parts in the last integral, use the triangle inequality and notice that $\left| \partial_{v^l} \left( \frac{v^iv^k}{v^0} \right) \right| \lesssim 1$.
\end{proof}
Recall that the proof of the Klainerman-Sobolev \eqref{th6} crucially relies on the equality
$$(t-r)\partial_r = -\frac{r}{t+r}S+\frac{tx^i}{(t+r)r}\Omega_{0i}.$$
The purpose of the following Lemma is to obtain a similar identity which does not involve any Lorentz boosts $\Omega_{0i}$.
\begin{Lem}\label{recovertauminus}
We have,
 $$ \tau_- \left| \partial_r \int_{\R^3_v} |g| dv \right| \hspace{2mm} \lesssim \hspace{2mm} r  \int_{\R^3_v} \left| \T_{\phi}(g)  \right| \frac{dv}{v^0}+\int_{\R^3_v} |s \nabla_{t,x} g | dv+ \int_{\R^3_v}|g|dv+\sum_{\ZZ \in \VV}\int_{\R^3_v} | \ZZ g| dv .$$
\end{Lem}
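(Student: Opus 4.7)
The plan is to find a substitute, bypassing the Lorentz boosts, for the classical identity $(t-r)\partial_r = -\tfrac{r}{t+r}S + \tfrac{tx^i}{(t+r)r}\Omega_{0i}$, in which $\Omega_{0i}$ is replaced by the operator $\T$ together with an extra $s\partial_r$ remainder. Starting from $\T = v^0\partial_t + v^r\partial_r + v^A e_A$ and $S = t\partial_t + r\partial_r$, I first eliminate $\partial_t$ by computing the combination $t\T(g) - v^0 Sg = (tv^r - rv^0)\partial_r g + tv^A e_A g$. The definition $s = t - rv^r/v^0$ of the scaling weight then yields $r(tv^r - rv^0) = v^0((t-r)(t+r) - ts)$, so that after multiplying by $r/v^0$ and dividing by $t+r$ one obtains the key pointwise identity
\[
 (t-r)\partial_r g \;=\; -\frac{r}{t+r}Sg \;+\; \frac{tr}{(t+r)v^0}\T(g) \;+\; \frac{ts}{t+r}\partial_r g \;-\; \frac{tr}{t+r}\frac{v^A}{v^0}e_A g.
\]
Since $|g|$ is Lipschitz, this identity extends a.e.\ to $|g|$, each derivative of $|g|$ equaling $\mathrm{sgn}(g)$ times the corresponding derivative of $g$.

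I then integrate in $v$ and estimate each of the four terms using $r/(t+r),\,t/(t+r)\le 1$, $tr/(t+r)\le r$ and $|ts|/(t+r)\le|s|$. Three pieces are immediate: the $Sg$ term contributes $\int|Sg|dv\le\sum_{\widehat{Z}\in\VV}\int|\widehat{Z}g|dv$ since $S\in\VV$; the $s\partial_r g$ term is absorbed in $\int|s\nabla_{t,x}g|dv$; and for the angular term, Lemma \ref{angupart} controls $\int(v^A/v^0)e_A|g|dv$ by $r^{-1}\sum_{i<j}\int(|g|+|\widehat{\Omega}_{ij}g|)dv$, so that the factor $r$ coming from $tr/(t+r)\le r$ absorbs the $r^{-1}$ and yields $\int|g|dv + \sum_{\widehat{Z}\in\VV}\int|\widehat{Z}g|dv$.

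The only delicate step is the $\T$-term. Using $\T(g) = \T_{\phi}(g) - \sigma v^0\nabla_x\phi\cdot\nabla_v g$ and $\mathrm{sgn}(g)\nabla_v g = \nabla_v|g|$ a.e., we rewrite
\[
 \int\frac{\T(|g|)}{v^0}dv \;=\; \int\mathrm{sgn}(g)\frac{\T_{\phi}(g)}{v^0}dv \;-\; \sigma\,\nabla_x\phi\cdot\int\nabla_v|g|\,dv ,
\]
and the second integral vanishes by integration by parts in $v$: the boundary contributions at $|v|\to\infty$ and at $v\to 0$ both vanish under sufficient regularity and decay of $g$ (in the Vlasov-Poisson application, once the velocity support is shown to be bounded away from $0$). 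Hence the $\T$-piece is bounded by $r\int|\T_{\phi}(g)|dv/v^0$, and combining the four bounds yields the inequality of the statement with $|t-r|$ in place of $\tau_-$. The case $|t-r|\ge 1$ then follows from $\tau_-\le\sqrt{2}|t-r|$, while the case $|t-r|\le 1$ is handled by the cruder bound $|\partial_r\int|g|dv|\le\int|\nabla_x g|dv\le\sum_{\widehat{Z}\in\VV}\int|\widehat{Z}g|dv$ (each $\partial_i$ belongs to $\VV$). The main obstacle is the first step: one must identify the right combination $t\T - v^0 S$ and recognise $(t-r)(t+r)-ts$ inside the coefficient of $\partial_r$, so that the Lorentz boost of the classical Klainerman-Sobolev identity is replaced, modulo the controllable remainder $s\partial_r$, by $\T$ itself.
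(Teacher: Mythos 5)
Your proof is correct, and it takes a genuinely streamlined route compared to the paper's. The paper proceeds in two stages: it first derives a pointwise decomposition of $(t^2-r^2)\partial_t g$ (obtaining along the way a bound on $\tau_-\bigl|\partial_t\int_{\R^3_v}|g|\,dv\bigr|$, equation \eqref{taupartialt}), and then derives a \emph{second} identity for $(t-r)\partial_r g$ whose right-hand side still contains $\partial_t$-terms and must therefore call back on the first step. You instead identify the combination $t\T - v^0 S$ which eliminates $\partial_t$ outright, and after the algebraic observation $r(tv^r - rv^0) = v^0\bigl((t-r)(t+r)-ts\bigr)$ you land in a single step on an identity expressing $(t-r)\partial_r g$ directly in terms of $\T(g)$, $Sg$, $s\,\partial_r g$ and the angular derivative $v^A e_A g$, with all coefficients uniformly bounded after dividing by $t+r$. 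All four pieces are then treated exactly as in the paper: the $S$-term via $\frac{r}{t+r}\le 1$, the $s\,\partial_r g$ term via $\frac{t}{t+r}\le 1$ into $\int|s\nabla_{t,x}g|\,dv$ (in fact you only ever produce $s\,\partial_r g$, so your variant of the lemma is marginally sharper), the angular piece via Lemma \ref{angupart} with the favourable factor $\frac{tr}{t+r}\le r$, and the $\T$-piece via the same $\T=\T_\phi-\sigma v^0\nabla_x\phi\cdot\nabla_v$ decomposition followed by integration by parts in $v$. Your explicit mention of where the boundary terms at $v\to 0$ and $|v|\to\infty$ live is a welcome precision the paper leaves implicit. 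The only stylistic divergence at the end is how the gap between $|t-r|$ and $\tau_-$ is closed: the paper absorbs it inline by writing $\tau_-\lesssim 1+\frac{|t^2-r^2|}{\tau_+}$, while you split into the regimes $|t-r|\le 1$ (handled by the crude bound $|\partial_r\int|g|\,dv|\le\sum_i\int|\partial_i g|\,dv$) and $|t-r|\ge 1$; both are fine. Net effect: your version avoids one auxiliary estimate and makes the role of the null combination $t\T - v^0 S$ as the boost substitute more transparent.
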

\begin{proof}
Start by noticing that
\begin{eqnarray}
\nonumber  (t^2-r^2)  \partial_t g \hspace{2mm} = \hspace{2mm} ts \partial_t g+tr \frac{v^r}{v^0} \partial_t g-r^2 \partial_t g & = & ts \partial_t g +r \frac{v^r}{v^0} Sg-r^2 \left(\frac{v^r}{v^0} \partial_r g+\partial_t g \right) \\ \nonumber
& = & ts \partial_t g+r \frac{v^r}{v^0} Sg-\frac{r^2}{v^0} \T(g)+r^2 \left( \frac{v^i}{v^0} \partial_i g-\frac{v^r}{v^0} \partial_r g \right) .
\end{eqnarray}
We then deduce, using Lemmas \ref{Comaverage} and \ref{angupart}, that
\begin{eqnarray}
 \nonumber \tau_- \left| \partial_t \int_{\R^3_v} |g| dv \right| & \lesssim & \left| \partial_t \int_{\R^3_v} |g| dv \right|+ \frac{1}{\tau_+}\left| \int_{\R^3_v} \frac{g}{|g|} \left( ts \partial_t g+r \frac{v^r}{v^0} Sg-\frac{r^2}{v^0} \T(g)+r^2 \left( \frac{v^i}{v^0} \partial_i g-\frac{v^r}{v^0} \partial_r g \right) \right) dv \right|  \\
 & \lesssim & r \left| \int_{\R^3_v} \T(|g|) \frac{dv}{v^0} \right|+\int_{\R^3_v} |s \partial_t g | dv+ \int_{\R^3_v} |g|dv+ \sum_{\ZZ \in \VV}\int_{\R^3_v} | \ZZ g| dv . \label{taupartialt}
 \end{eqnarray}
To conclude the proof, remark that
\begin{eqnarray}
\nonumber \left| \int_{\R^3_v} \T(|g|) \frac{dv}{v^0} \right| & \leq & \left| \int_{\R^3_v} \T_{\phi}(|g|) \frac{dv}{v^0} \right|+\left| \int_{\R^3_v} -\sigma \nabla_x \phi \cdot \nabla_v |g| dv \right| \hspace{2mm} \leq \hspace{2mm} \int_{\R^3_v} \left| \T_{\phi}(g)\right| \frac{dv}{v^0}, \\
\nonumber (t-r)  \partial_r g & = & s \partial_r g+r\frac{v^r}{v^0} \partial_r g-r \partial_r g \hspace{2mm} = \hspace{2mm} s \partial_r g+\left(\frac{v^r}{v^0}-1 \right) Sg+\left(1-\frac{v^r}{v^0} \right) t \partial_t g \\ \nonumber
& = & s \partial_r g+\left(\frac{v^r}{v^0}-1 \right) Sg+s\partial_t g+(r-t)\frac{v^r}{v^0} \partial_t g,
\end{eqnarray}
and use Lemma \ref{Comaverage} as well as $|v^r| \leq v^0$ and \eqref{taupartialt}.
\end{proof}
\begin{Rq}
Note that we only used the space translations in order to control $\partial_r g$ in the region $r \leq 1$. If $r \geq 1$, we could take advantage of the relation
$$ \partial_r  \hspace{2mm} = \hspace{2mm} \frac{S}{r}-\frac{t}{r} \partial_t \hspace{2mm} = \hspace{2mm} \frac{S}{r}-\frac{s}{r} \partial_t+\frac{v^r}{v^0} \partial_t.$$
\end{Rq}
Before proving pointwise decay estimates for $\int_{\R^3_v} |g| dv$, let us recall two classical Sobolev inequalities for velocity averages (on $\R^3_x$ and on the sphere $\mathbb{S}^2$).

\begin{Lem}\label{Sobsphere}
Let $H : \R^3_x \times  \R^3_v \rightarrow \R$ and $h : \mathbb{S}^2 \times  \R^3_v \rightarrow \R$ be sufficiently regular functions. Then, with $\widehat{\Omega}^{\beta} \in \{ \widehat{\Omega}_{12}, \widehat{\Omega}_{13}, \widehat{\Omega}_{23} \}^{|\beta|}$,
\begin{eqnarray}
\nonumber \left\| \int_{\R^3_v}  |H| dv \right\|_{L^{\infty}(\R^3_x)} & \leq  & \left\| \int_{\R^3_v}   \left| \partial_1 \partial_2 \partial_3 H \right| dv \right\|_{L^1(\R^3_x)},  \\ \nonumber
\left\| \int_{\R^3_v}  |h| dv \right\|_{L^{\infty}(\mathbb{S}^2)} & \lesssim  & \sum_{ |\beta| \leq 2 } \left\| \int_{\R^3_v}   \left| \widehat{\Omega}^{\beta} h \right| dv \right\|_{L^1(\mathbb{S}^2)} .
\end{eqnarray}
\end{Lem}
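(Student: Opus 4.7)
Both inequalities are variants of the critical Sobolev embedding $W^{n,1}\hookrightarrow L^\infty$ in dimension $n$: the first is the case $n=3$ on $\R^3_x$ and the second is the case $n=2$ on $\mathbb{S}^2$. In both situations my plan is to exploit the anisotropic mixed-partials form of this embedding rather than the isotropic chain of Sobolev embeddings, which fails at the borderline in two dimensions.

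For the first inequality, I would fix $v\in\R^3_v$ and use the decay at spatial infinity built into ``sufficiently regular'' to write $H(\cdot,v)$ as an iterated primitive,
\[
H(x,v) \hspace{2mm} = \hspace{2mm} \int_{-\infty}^{x^1}\!\int_{-\infty}^{x^2}\!\int_{-\infty}^{x^3} \partial_1\partial_2\partial_3 H(y,v)\, dy.
\]
This delivers the pointwise bound $|H(x,v)|\leq \int_{\R^3_y} |\partial_1\partial_2\partial_3 H|(y,v)\,dy$. Integrating in $v$, swapping the order of integration by Fubini, and taking the supremum in $x$ produces the first claim.

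For the second inequality, I would first establish the intermediate Sobolev estimate
\[
\|u\|_{L^\infty(\mathbb{S}^2)} \hspace{2mm} \lesssim \hspace{2mm} \sum_{|\beta|\leq 2} \|\Omega^{\beta} u\|_{L^1(\mathbb{S}^2)},
\]
where $\Omega^\beta$ is a product of at most two elements of $\{\Omega_{12},\Omega_{13},\Omega_{23}\}$. The idea is to introduce a finite partition of unity $\{\chi_k\}$ subordinate to an atlas of $\mathbb{S}^2$ in which two of the rotations are transverse and span the tangent bundle; on each chart, the Euclidean mixed partial $\partial_1\partial_2$ can be written as a linear combination of products of at most two $\Omega_{ij}$ with smooth bounded coefficients, so the two-dimensional analogue of the iterated-primitive argument above, applied to $\chi_k u$, yields the bound.

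To conclude, I would apply this inequality to $u(\omega):=\int_{\R^3_v}|h(\omega,v)|\,dv$ and invoke the commutation-with-velocity-average mechanism already used in the proof of Lemma \ref{Comaverage}, namely integration by parts in $v$ against the $v^i\partial_{v^j}-v^j\partial_{v^i}$ part of $\widehat{\Omega}_{ij}$. Iterating this for each factor yields, for every $|\beta|\leq 2$, the pointwise bound $|\Omega^\beta u(\omega)|\leq \int_{\R^3_v}|\widehat{\Omega}^\beta h|(\omega,v)\,dv$, after which taking $L^1$ in $\omega$ finishes the proof. The only step requiring genuine care is the critical $\mathbb{S}^2$ Sobolev estimate at the exponent $W^{2,1}\hookrightarrow L^\infty$: the anisotropic mixed-partials representation is essential, since the chain $W^{2,1}\hookrightarrow W^{1,2}$ in two dimensions does not reach $L^\infty$.
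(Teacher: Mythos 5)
Your proposal is correct and matches the paper's approach. For the first inequality, the paper iterates one-dimensional Sobolev inequalities on $\int_{\R^3_v}|H|\,dv$ and uses Lemma \ref{Comaverage} to pass $\partial_i$ through the absolute value and the $v$-average, while you iterate the fundamental theorem of calculus on $H$ itself before integrating in $v$ and applying Fubini; this is a cosmetic reordering of the same argument. For the second inequality the paper simply cites Lemma $3.5$ of \cite{massless}, and your sketch (partition of unity on $\mathbb{S}^2$, expressing local mixed partials as products of at most two $\Omega_{ij}$ with smooth bounded coefficients, followed by integration by parts in $v$ to trade $\Omega_{ij}$ for $\widehat{\Omega}_{ij}$) is the expected content of that reference, and you correctly identify the anisotropic $W^{2,1}\hookrightarrow L^\infty$ embedding as the crux. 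One small technical point: at order two, $\widehat{\Omega}_{kl}\widehat{\Omega}_{ij}(|h|)$ is not literally $\pm\widehat{\Omega}_{kl}\widehat{\Omega}_{ij}(h)$ because of the nonsmoothness of $|\cdot|$ at $0$, so your stated pointwise bound $|\Omega^{\beta}u|\leq\int_{\R^3_v}|\widehat{\Omega}^{\beta}h|\,dv$ for $|\beta|=2$ needs a regularization $|h|\approx\sqrt{h^2+\varepsilon}$ or should be relaxed to $|\Omega^{\beta}u|\lesssim\sum_{|\beta'|\leq|\beta|}\int_{\R^3_v}|\widehat{\Omega}^{\beta'}h|\,dv$; since the right-hand side of the lemma already sums over $|\beta|\leq 2$, this does not affect the final estimate.
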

\begin{proof}
We refer to Lemma $3.5$ of \cite{massless} for a proof of the second inequality. For the first one, use successively three times a one dimensional Sobolev inequality and then Lemma \ref{Comaverage} in order to get
\begin{eqnarray}
\nonumber \int_{\R^3_v}  |H|(x,v) dv & = & \int^{x^1}_{- \infty} \partial_1 \left( \int_{\R^3_v}  |H|(y^1,x^2,x^3,v) dv \right) dy^1 \hspace{2mm} \leq \hspace{2mm} \int^{x^1}_{- \infty} \int_{\R^3_v} |\partial_1 H|(y^1,x^2,x^3,v) dv dy^1 \\ \nonumber
& = & \int^{x^1}_{- \infty} \int_{-\infty}^{x^2} \partial_2 \left( \int_{\R^3_v}  |\partial_1 H|(y^1,y^2,x^3,v) dv \right) dy^2 dy^1 \\ \nonumber & \leq & \int^{x^1}_{- \infty} \int_{- \infty}^{x^2} \int_{\R^3_v} | \partial_2 \partial_1 H|(y^1,y^2,x^3,v) dv dy^2 dy^1 \\ \nonumber
& = & \int^{x^1}_{- \infty} \int_{-\infty}^{x^2} \int_{-\infty}^{x^3} \partial_3 \left( \int_{\R^3_v}  |\partial_2 \partial_1 H|(y^1,y^2,y^3,v) dv \right) dy^3 dy^2 dy^1 \hspace{1mm} \leq \hspace{1mm} \int_{\R^3_y} \int_{\R^3_v} | \partial_3 \partial_2 \partial_1 H| dv dy.
\end{eqnarray}
\end{proof}
We are now ready for the main result of this section.

\begin{Pro}\label{decay}
For all $(t,x) \in [0,T[ \times \R^3$ and $p \geq 0$, we have, with $\widehat{\Omega}^{\beta} \in \{ \widehat{\Omega}_{12}, \widehat{\Omega}_{13}, \widehat{\Omega}_{23} \}^{|\beta|}$ and $\ZZ^{\xi} \in \VV^{|\xi|}$,
\begin{eqnarray}
\nonumber \int_{\R^3_v} |g|(t,x,v) dv & \lesssim & \frac{1}{(1+|x|)^2 \tau_-} \left( \sum_{|\beta| \leq 2}  \int_{\Sigma_t} \int_{\R^3_v} r \left| \widehat{\Omega}^{\beta} \left( \T_{\phi} ( g) \right) \right| \frac{dv}{v^0} dx+\sum_{|\xi| \leq 3}  \int_{\Sigma_t} \int_{\R^3_v} (1+|s|)\left| \ZZ^{\xi} g \right| dv dx \right), \\ \nonumber
\int_{\R^3_v} |g|(t,x,v) dv & \lesssim & \frac{1}{(1+|x|)^2 \tau_-^{1+p}} \left( \sum_{ |\beta| \leq 2 }   \int_{\Sigma_t} \int_{\R^3_v} r \left| z^p \widehat{\Omega}^{\beta} \left( \T_{\phi} ( g) \right) \right| \frac{dv}{v^0} dx +  \sum_{ |\xi| \leq 3 } \int_{\Sigma_t} \int_{\R^3_v} \left| z^{p+1} \ZZ^{\xi} g \right| dv dx \right).
\end{eqnarray}
\end{Pro}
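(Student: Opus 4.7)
The strategy is to combine the two Sobolev inequalities of Lemma \ref{Sobsphere} with the weighted radial identity of Lemma \ref{recovertauminus}, treating the regions $|x|\geq 1$ and $|x|\leq 1$ separately.

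For $|x|\geq 1$, set $r=|x|$ and apply the spherical Sobolev inequality of Lemma \ref{Sobsphere} to the velocity average on the sphere of radius $r$, which yields
$$\int_{\R^3_v}|g|(t,x,v)\,dv\,\leq\, C\sum_{|\beta|\leq 2}G_\beta(r),\qquad G_\beta(r)\,:=\,\int_{\mathbb{S}^2}\int_{\R^3_v}|\widehat{\Omega}^\beta g|(t,r\omega',v)\,dv\,d\omega'.$$
I would then multiply by $r^2\tau_-(t,r)$ and apply the fundamental theorem of calculus in the radial variable (which is legitimate as soon as the right-hand side of the proposition is finite, so that $\rho^2\tau_-G_\beta\to 0$ at infinity), using $|\partial_\rho\tau_-|\leq 1$, to get
$$r^2\tau_-(t,r)\,G_\beta(r)\,\leq\,\int_r^\infty\bigl[2\rho\tau_-G_\beta(\rho)+\rho^2 G_\beta(\rho)+\rho^2\tau_-|G_\beta'(\rho)|\bigr]d\rho.$$
For the derivative piece, commuting $\partial_\rho$ with the angular integral and invoking Lemma \ref{recovertauminus} applied to $\widehat{\Omega}^\beta g$ in place of $g$ produces
$$\rho^2\tau_-|G_\beta'(\rho)|\,\lesssim\,\int_{\mathbb{S}^2}\left[\rho^3\int\frac{|\T_\phi(\widehat{\Omega}^\beta g)|}{v^0}\,dv\,+\,\rho^2(1+|s|)\sum_{|\xi|\leq 1}\int|\ZZ^\xi\widehat{\Omega}^\beta g|\,dv\right]d\omega'.$$
Converting $\rho^2\,d\rho\,d\omega'=dy$ transforms the $\rho$-integrals over $\rho\geq r$ into the two desired spatial integrals on $\Sigma_t$, with $|\xi|+|\beta|\leq 3$; the commutator $[\widehat{\Omega}^\beta,\T_\phi]g$ produces only lower-order terms absorbed into the $(1+|s|)|\ZZ^\xi g|$ contribution.

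The remaining two pieces $\rho\tau_- G_\beta$ and $\rho^2 G_\beta$ are converted by the same change of variables; the only delicate point is absorbing the leftover factor $\tau_-(t,|y|)/|y|$ into $(1+|s|)$. I would split the domain of integration: when $|y|\leq t$, Lemma \ref{weightdecay} gives $\tau_-(t,|y|)\leq 1+|s(t,y,v)|$ pointwise in $v$, and when $|y|>t$, the crude bound $\tau_-\leq 1+|y|\leq 2|y|$ combined with $|y|\geq 1$ yields $\tau_-/|y|\lesssim 1$. In both cases the piece reduces to the $(1+|s|)|\ZZ^\xi g|$ term. For $|x|\leq 1$, where $(1+|x|)^2\lesssim 1$, I would use instead the flat $\R^3$ Sobolev inequality of Lemma \ref{Sobsphere} and note that $\tau_-(t,x)\lesssim 1+|s(t,x,v)|$ uniformly in $v$ on $|x|\leq 1$ (Lemma \ref{weightdecay} when $t\geq|x|$, and $\tau_-\leq 2$ trivially when $t\leq|x|\leq 1$); applying Sobolev to the product $(1+|s|)|g|$ and observing that $\partial_i(1+|s|)$ is bounded concludes this region with $(1+|s|)|\ZZ^\xi g|$ norms, $|\xi|\leq 3$.

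The second, $z^p$-weighted inequality is obtained by the same scheme with $\tau_-^{1+p}$ in place of $\tau_-$, exploiting the pointwise bound $\tau_-\leq z$ from Lemma \ref{zprop} and the estimate $|\ZZ(z^p)|\lesssim z^p$ to turn every $\tau_-$-power arising along the way into a $z$-power on the integrand (noting that $z\geq 1$ because $v^0/v^0=1\in\V_0$). The main obstacle is the control of $\tau_-(t,|y|)$ outside the light cone $|y|>t$ without invoking Lorentz boost weights; for the first inequality this is resolved precisely by the extra $|y|$ factor produced by the radial integration, which is why the weight $r$ appears in front of $|\widehat{\Omega}^\beta\T_\phi g|/v^0$ in the right-hand side.
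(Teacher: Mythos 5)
Your approach uses the same ingredients as the paper's proof (fundamental theorem of calculus in $r$, the weighted radial identity of Lemma~\ref{recovertauminus}, the two Sobolev inequalities of Lemma~\ref{Sobsphere}, and $\tau_-\leq z$ from Lemma~\ref{zprop}), and your handling of the region $|x|\leq 1$, the decay at infinity, and the absorption of $\tau_-(t,|y|)/|y|$ into $(1+|s|)$ are all correct. However, you reverse the order of the two main steps: you apply the spherical Sobolev inequality first and then Lemma~\ref{recovertauminus}, whereas the paper differentiates $\tau_-^{1+p}\int_{\R^3_v}|g|\,dv$ in $r$ first (factoring out $|x|^2$ and converting $|x|^2\leq r^2$ afterward) and only then applies the spherical Sobolev inequality to each of the resulting integrands $\tfrac{1}{v^0}\T_\phi(g)$, $g$, $s\nabla_{t,x}g$, $\ZZ g$. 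This reversal is not an innocent permutation.

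The genuine gap is the claim that the commutator $[\widehat{\Omega}^\beta,\T_\phi]g$ is lower order and absorbed into the $(1+|s|)|\ZZ^\xi g|$ contribution. Since you apply Lemma~\ref{recovertauminus} to $\widehat{\Omega}^\beta g$, you produce $\T_\phi(\widehat{\Omega}^\beta g)$, whereas the statement requires $\widehat{\Omega}^\beta(\T_\phi(g))$. By Proposition~\ref{comu0}, for a rotation $\Omega$ one has $[\T_\phi,\widehat\Omega]\,g=-\sigma v^0\nabla_x\Omega\phi\cdot\nabla_v g$; iterating gives terms of the form $v^0\nabla_x\Omega^\gamma\phi\cdot\nabla_v\widehat\Omega^\alpha g$. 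For an arbitrary sufficiently regular $\phi$ (as in the statement, which is a general functional inequality), these terms are not controlled by anything on the right-hand side of Proposition~\ref{decay}: they carry $\nabla_x\phi$, and $\nabla_v g$ behaves like $r\nabla_x g$ plus $\ZZ g$, so it cannot be bounded by $(1+|s|)|\ZZ^\xi g|$ alone. In the paper's order, the Sobolev inequality acts directly on the quantity $\T_\phi(g)$ already sitting under the velocity integral, yielding $\widehat{\Omega}^\beta(\T_\phi g)$ with no commutator at all. To repair your argument you should run the radial step first, as the paper does, and apply the spherical Sobolev inequality only to the final integrands.
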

\begin{proof}
We fix $p \geq 0$, $(t,x) \in [0,T[ \times \R^3$ and we start by considering the case where $|x| \leq 1$. Then, as $\tau_- \leq 1+|s|$ by Lemma \ref{weightdecay} and using the $L^1(\R^3)-L^{\infty}(\R^3)$ Sobolev inequality on $\R^3_x$ of Lemma \ref{Sobsphere}, we have
$$ \tau_-^{1+p} \int_{\R^3_v} |g| dv \hspace{2mm} \lesssim \hspace{2mm} \int_{\R^3_v} |g|+|s^{1+p} g| dv \hspace{2mm} \lesssim \hspace{2mm} \sum_{|\beta| \leq 3} \int_{\Sigma_t} \int_{\R^3_v} (1+|s|)^{1+p}|\partial^{\beta}_x g| dv dx.$$
Otherwise, $|x| \geq 1$ and Lemma \ref{recovertauminus} allows us to obtain
\begin{eqnarray}
\nonumber \tau_-^{1+p} |x|^2 \int_{\R^3_v} |g|(t,|x| \omega,v) dv & = & -|x|^2 \int_{r=|x|}^{+ \infty} \partial_r \left( \tau_-^{1+p} \int_{\R^3_v} |g|(t,r \omega,v) dv \right) dr \\ \nonumber
& \lesssim & \int_{r=|x|}^{+ \infty} \left( \int_{\R^3_v} \tau_-^p |g| dv+ \tau_-^{1+p} \left| \partial_r \int_{\R^3_v} |g|(t,r \omega,v) dv \right| \right) r^2dr \\ \nonumber
& \lesssim & \int_{r=0}^{+ \infty} \tau_-^p \left( r  \int_{\R^3_v} \left| \T_{\phi}(g)  \right| \frac{dv}{v^0}+ \int_{\R^3_v} |s \nabla_{t,x} g | dv+ \sum_{|\kappa| \leq 1} \int_{\R^3_v} | \ZZ^{\kappa} g| dv \right) r^2 dr.
\end{eqnarray}
It remains to apply the $L^1(\mathbb{S}^2)-L^{\infty}(\mathbb{S}^2)$ Sobolev inequality of Lemma \ref{Sobsphere} to $\frac{1}{v^0}\T_{\phi}(g)$, $g$, $s \nabla_{t,x} g$ as well as $\ZZ g$ and then to use the inequality $\tau_-^p \leq z^p$ which comes from Lemma \ref{zprop}.
\end{proof}
\begin{Rq}
We could precise these estimates as follows. In the first one, the proof gives that the weight $s$ only hits derivatives $\ZZ^{\xi} g$ composed of at least one translation and a similar observation can be done for the second estimate. These properties can be useful in order to exploit certain hierarchies in the commuted equations when one studies a Vlasov system (see \cite{dim3} and \cite{ext}) but will not be used in this paper.
\end{Rq}
\begin{Rq}
If one wants to fully recover the decay rate given by a standard Klainerman-Sobolev inequality as \eqref{th6}, which relies on Lorentz boosts, for a function $f$ solution so $\T(f)=0$, one needs to take $p =2$ and then assume more decay initially on $f$ (two powers of $|x|$).
\end{Rq}
\begin{Rq}\label{Lorentzweight}
The boosts $x^i-t\frac{v^i}{v^0}$ are used here to gain decay in $t-r$ in the region $r \geq t$. We could prove a similar result without using them by propagating weighted $L^1$ norms in the region $r \geq t$ with the multiplier $\tau_-^p \partial_t$. Note that it would not require stronger initial decay on $f$ and we then choose to work with $z_{0i}$ for simplicity.
\end{Rq}

\section{Preliminaries for the study of the relativistic massless Vlasov-Poisson system}\label{sec3phi}

We consider, for all this section, $(f,\phi)$ a sufficiently regular solution to \eqref{VP1}-\eqref{VP2} defined on $[0,T[$.

\subsection{Commutation properties and energy estimates}

In order to use a vector field method to study the Vlasov field, we will have to commute both equations.
\begin{Pro}\label{comu0}
Let $ Z \in \Vv \setminus \{ S \}$. We have
$$ \Delta Z \phi \hspace{2mm} = \hspace{2mm} \int_{\R^3_v} \widehat{Z} f dv, \hspace{10mm} \Delta S \phi \hspace{2mm} = \hspace{2mm} \int_{\R^3_v} S f dv+2 \int_{\R^3_v} f dv \hspace{10mm} \text{and} \hspace{10mm} \T_{\phi}(\widehat{Z} f) \hspace{2mm} = \hspace{2mm} - \sigma v^0 \nabla_x Z \phi \cdot \nabla_v f .$$
For the scaling in $(t,x)$ and the one in $v$, we have
$$ \T_{\phi} ( S f) \hspace{2mm} = \hspace{2mm} -\sigma v^0 \nabla_x S \phi \cdot \nabla_v f+2 \sigma v^0 \nabla_x  \phi \cdot \nabla_v f  \hspace{12mm} \text{and} \hspace{12mm}  \T_{\phi}( S_v f ) \hspace{2mm} = \hspace{2mm} 3 \sigma v^0 \nabla_x Z \phi \cdot \nabla_v f.$$
\end{Pro}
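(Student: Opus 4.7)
The plan is to treat the two groups of identities separately. For the Poisson identities, I would start from the equation $\Delta \phi = \int_{\R^3_v} f\,dv$ and differentiate by $Z \in \Vv$. For $Z = \partial_\mu$ or $Z = \Omega_{ij}$, the operator $Z$ commutes with the spatial Laplacian $\Delta$, so the left-hand side becomes $\Delta(Z\phi)$ immediately. On the right-hand side, one must show that the outside $v$-integral can absorb $Z$ as $\widehat{Z}$: for a translation this is trivial since $\widehat{\partial_\mu} = \partial_\mu$; for a rotation one writes $\Omega_{ij} = \widehat{\Omega}_{ij} - v^i \partial_{v^j} + v^j\partial_{v^i}$ and kills the extra velocity terms by integration by parts in $v$, exactly as in Lemma \ref{Comaverage}. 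For the scaling $S$, the same integration-by-parts argument gives $S \int f\,dv = \int Sf\,dv$; the discrepancy with $\Delta$ comes from the commutator $[\Delta, S] = 2\Delta$ on $\R^3$, which I would verify by a one-line computation writing $S = x^i\partial_i + t\partial_t$ and using that $\Delta$ is the spatial Laplacian. Combining these gives $\Delta(S\phi) = S\Delta\phi + 2\Delta\phi = \int Sf\,dv + 2\int f\,dv$.

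For the transport identities, the general strategy is to compute each commutator $[\T_\phi, \widehat{Z}]$ by splitting $\T_\phi = \T + A$, where $A = \sigma v^0 \nabla_x\phi \cdot \nabla_v$, so that $[\T_\phi, \widehat Z] = [\T, \widehat Z] + [A, \widehat Z]$, and then applying the result to $f$ using $\T_\phi f = 0$. For $Z = \partial_\mu$, we have $[\T, \partial_\mu] = 0$ (since $v^\mu$ is independent of $(t,x)$) and a direct computation gives $[A, \partial_\mu]f = -\sigma v^0 \nabla_x(\partial_\mu \phi) \cdot \nabla_v f$. For $Z = \Omega_{ij}$, the whole point of the complete lift is that $[\T, \widehat{\Omega}_{ij}] = 0$: one checks this by splitting $\widehat{\Omega}_{ij}$ into its $(x)$ and $(v)$ parts and verifying that the commutator of $v^\mu\partial_\mu$ with the $x$-part cancels the commutator with the $v$-part (using $v^i\partial_{v^j} v^0 - v^j\partial_{v^i} v^0 = 0$ since $v^0 = |v|$). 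Then $[A, \widehat{\Omega}_{ij}] f = -\sigma v^0 \nabla_x(\Omega_{ij}\phi)\cdot \nabla_v f$ by a similar expansion, and together with $\T_\phi f = 0$ this yields the stated formula.

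The scaling vector fields $S$ and $S_v$ introduce the lower-order corrections by genuinely failing to commute with $\T$. For $S = t\partial_t + x^i\partial_i$, a direct computation shows $[\T, S] = \T$, and expanding $[A, S]$ one finds a contribution of the form $-\sigma v^0 (\nabla_x S\phi - \nabla_x\phi)\cdot\nabla_v$ from the action of $S$ on $\nabla_x\phi$ (using $\nabla_x(S\phi) = S\nabla_x\phi + \nabla_x\phi$), while $S$ commutes with $\partial_{v^j}$ and annihilates $v^0$. Combining these via $\T_\phi f = 0$ produces the prescribed linear combination of $\nabla_x S\phi$ and $\nabla_x\phi$ terms. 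For $S_v = v^i\partial_{v^i}$, one uses $\sum_i v^i \partial_{v^i} v^0 = v^0$ and $\sum_i v^i\partial_{v^i}v^j = v^j$ to get $[\T, S_v] = -\T$, whereas $S_v$ commutes with $A$ because the factor $v^0$ and the derivative $\partial_{v^j}$ conspire (the $v^k (v^k/v^0) \partial_{v^j} f$ term exactly cancels the $v^0 \partial_{v^j}f$ term). This gives the $S_v$ identity directly.

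None of this is conceptually difficult; the only thing I expect to have to be careful about is bookkeeping of signs and numerical prefactors (in particular the precise combination of $\nabla_x S\phi$ and $\nabla_x\phi$ in the $S$-formula, and similarly the coefficient in the $S_v$-formula), since all of the identities reduce to first-order differential geometry computations using the explicit forms $S = x^\mu \partial_\mu$, $S_v = v^i\partial_{v^i}$ and $\widehat{\Omega}_{ij} = \Omega_{ij} + v^i\partial_{v^j} - v^j \partial_{v^i}$, together with $\partial_{v^i}v^0 = v^i/v^0$ and the fact that $v^\mu$ does not depend on $(t,x)$.
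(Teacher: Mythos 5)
Your strategy coincides with the paper's in every respect: integration by parts in $v$ (as in Lemma~\ref{Comaverage}) together with $[\Delta,Z]=0$, $[\Delta,S]=2\Delta$ for the Poisson identities, and direct computation of $[\T,\widehat{Z}]$ and $[A,\widehat{Z}]$ with $A:=\sigma v^0\nabla_x\phi\cdot\nabla_v$, followed by $\T_\phi f=0$, for the Vlasov identities. The mechanisms you describe are all correct: $[\T,\widehat{\Omega}_{ij}]=0$ from the two cancelling contributions of the complete lift, $[A,S_v]=0$ from the cancellation $v^k(v^k/v^0)=v^0$, and $[A,S]=-\sigma v^0\big(\nabla_x(S\phi)-\nabla_x\phi\big)\cdot\nabla_v$ from $[S,\partial_i]=-\partial_i$.

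The gap is precisely the bookkeeping you flag at the end and then do not carry out. You assert that the above "gives the $S_v$ identity directly" and "produces the prescribed linear combination" for $S$, but your own commutators plus $\T f=-Af$ actually yield $\T_\phi(S_v f)=-\T f=\sigma v^0\nabla_x\phi\cdot\nabla_v f$ (coefficient $1$, no $Z$) and $\T_\phi(Sf)=\T f+[A,S]f=-\sigma v^0\nabla_x(S\phi)\cdot\nabla_v f$, with no extra $\nabla_x\phi$ term (sanity-check either against $\phi=tx^1$). Neither matches the proposition as printed. You should have pushed the computation to the end and noticed the discrepancy: the two scaling formulas in the statement, and the step in the paper's own proof that writes $S_v(\nabla_v f)=\nabla_v(S_v f)-\partial_{v^i}(v^i)\,\nabla_v f$ rather than $\nabla_v(S_v f)-\nabla_v f$ (since $[S_v,\partial_{v^j}]=-\partial_{v^j}$), carry incorrect numerical coefficients. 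This is harmless downstream, since only the abstract structure recorded in Proposition~\ref{comu} is used and the precise constants never enter the estimates, but a proof proposal should not claim to reproduce a formula that its own (correct) computation contradicts.
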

\begin{proof}
To commute the Poisson equation, we use that $[\Delta,Z]=0$, $[\Delta,S]=2 \Delta$ as well as
$$ \partial_{\mu} \int_{\R^3_v} f dv = \int_{\R^3_v} \partial_{\mu} f dv, \hspace{1cm} S \int_{\R^3_v}  f dv = \int_{\R^3_v} S f dv \hspace{10mm} \text{and} \hspace{10mm} \Omega_{ij} \int_{\R^3_v}  f dv = \int_{\R^3_v} \widehat{\Omega}_{ij} f dv, $$
which ensues from integration by parts in $v$. For the Vlasov equation, one can check by direct computations that $[\T,\widehat{Z}]=0$, $[\T,S]=\T$ and $[\T, S_v]=-\T$. Then, note for instance that
\begin{eqnarray}
\nonumber S_v \left( v^0 \nabla_x \phi \cdot \nabla_v f \right) & = & v^i \partial_{v^i}(v^0) \nabla_x \phi \cdot \nabla_v f+  v^0\nabla_x   \phi \cdot \nabla_v S_v f-v^0\partial_{v^i}(v^i) \nabla_x   \phi \cdot \nabla_v f \\ \nonumber
& = & v^0 \nabla_x   \phi \cdot \nabla_v S_v f-2 v^0 \nabla_x   \phi \cdot \nabla_v f .
\end{eqnarray}
\end{proof}
Iterating this Proposition, one obtains
\begin{Pro}\label{comu}
Let $Z^{\gamma} \in \Vv^{|\gamma|}$ and $\widehat{Z}^{\beta} \in \VV^{|\beta|}$. Then, there exist integers $C^{\gamma}_{\xi}$ and $C^{\beta}_{\kappa, \alpha}$ such that 
\begin{eqnarray}
\nonumber [T_{\phi}, \widehat{Z}^{\beta}](f) \hspace{2mm} = \hspace{2mm} T_{\phi} ( \widehat{Z}^{\beta} f ) & = & \sum_{|\kappa|+|\alpha| \leq |\beta|} C^{\beta}_{\kappa,\alpha}v^0 \nabla_x Z^{\kappa} \phi \cdot \nabla_v \widehat{Z}^{\alpha} f, \\
\nonumber \Delta Z^{\gamma} \phi & = & \sum_{|\xi| \leq |\gamma|} C^{\gamma}_{\xi} \int_{\R^3_v} \widehat{Z}^{\xi} f dv.
\end{eqnarray}
\end{Pro}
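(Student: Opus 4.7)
The proof proceeds by induction on $|\beta|$ and $|\gamma|$, with Proposition \ref{comu0} providing both the $|\beta|=|\gamma|=1$ base cases and the one-step commutator identities needed for the induction. Observe first that $[\T_{\phi},\widehat{Z}^{\beta}](f)=\T_{\phi}(\widehat{Z}^{\beta} f)$ follows immediately from the Vlasov equation $\T_{\phi} f=0$, so the two expressions on the left of the first displayed identity are automatically equal.

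For the Poisson identity, I would split $Z^{\gamma}=Z^{\gamma_1}Z^{\gamma'}$ with $|\gamma'|=|\gamma|-1$, use $[\Delta,Z^{\gamma_1}]=0$ if $Z^{\gamma_1}\in\Vv\setminus\{S\}$ and $[\Delta,S]=2\Delta$ otherwise, and write
$$\Delta Z^{\gamma}\phi \;=\; Z^{\gamma_1}\bigl(\Delta Z^{\gamma'}\phi\bigr)+c_{\gamma_1}\,\Delta Z^{\gamma'}\phi,\qquad c_{\gamma_1}\in\{0,2\}.$$
The inductive hypothesis expresses $\Delta Z^{\gamma'}\phi$ as a $\mathbb{Z}$-linear combination of $\int_{\R^3_v}\widehat{Z}^{\xi}f\,dv$ with $|\xi|\leq|\gamma'|$, and I then pull $Z^{\gamma_1}$ inside the velocity integral exactly as in the proof of Proposition \ref{comu0} (translations and $S$ pass through trivially, while $\Omega_{ij}$ becomes $\widehat{\Omega}_{ij}$ after integration by parts in $v$). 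The resulting multi-indices have order at most $|\gamma|$, as required.

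For the transport identity, decompose $\widehat{Z}^{\beta}=\widehat{Z}^{\beta_1}\widehat{Z}^{\beta'}$ and split
$$\T_{\phi}(\widehat{Z}^{\beta}f)\;=\;\widehat{Z}^{\beta_1}\bigl(\T_{\phi}(\widehat{Z}^{\beta'}f)\bigr)+[\T_{\phi},\widehat{Z}^{\beta_1}](\widehat{Z}^{\beta'}f).$$
The bracket is handled by direct application of Proposition \ref{comu0} to $\widehat{Z}^{\beta'}f$, producing an integer multiple of $v^0\nabla_xZ^{\beta_1}\phi\cdot\nabla_v\widehat{Z}^{\beta'}f$ together with, when $\widehat{Z}^{\beta_1}\in\{S,S_v\}$, an extra term of the form $v^0\nabla_x\phi\cdot\nabla_v\widehat{Z}^{\beta'}f$. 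For the first term, the induction hypothesis recasts $\T_{\phi}(\widehat{Z}^{\beta'}f)$ as a finite $\mathbb{Z}$-linear combination of $v^0\nabla_xZ^{\kappa}\phi\cdot\nabla_v\widehat{Z}^{\alpha}f$ with $|\kappa|+|\alpha|\leq|\beta|-1$; it then remains to distribute $\widehat{Z}^{\beta_1}$ across each summand by Leibniz.

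The only genuinely non-mechanical step, and the main obstacle in my view, is checking that each such distribution stays inside the same template with integer coefficients and total order at most $|\beta|$. This reduces to a small table of elementary commutation identities between the vector fields of $\VV$: $\partial_{\mu}v^0=\widehat{\Omega}_{ij}v^0=Sv^0=0$ while $S_v v^0=v^0$; translations commute with $\nabla_x$ and $\nabla_v$; $[S,\nabla_x]=-\nabla_x$, $[S,\nabla_v]=0$, $[S_v,\nabla_x]=0$, $[S_v,\nabla_v]=-\nabla_v$; and $[\widehat{\Omega}_{ij},\partial_k]=-\delta^i_k\partial_j+\delta^j_k\partial_i$ with the analogous formula for $\partial_{v^k}$. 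Each identity is integer-valued and raises $|\kappa|+|\alpha|$ by at most one, so the induction closes with integer coefficients $C^{\beta}_{\kappa,\alpha}$ and $C^{\gamma}_{\xi}$ as claimed.
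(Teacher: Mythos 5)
Your inductive argument is correct and is precisely the iteration the paper invokes in its one-line justification (``Iterating this Proposition, one obtains''), just carried out explicitly. The paper provides no further details, so your proof simply fills in the bookkeeping (Leibniz distribution and the commutation table among the vector fields of $\VV$) that the paper leaves implicit.
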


In order to control the $L^1_{x,v}$ norm of the derivatives of $f$, we will apply several times the following approximate conservation law.

\begin{Pro}\label{energyparti}
Let $g:[0,T[ \times \R^3_x \times \R^3_v \rightarrow \R$ be a sufficiently regular function. Then,
$$\forall \hspace{0.5mm} t \in [0,T[, \hspace{1cm} \left\| g (t, \cdot , \cdot ) \right\|_{L^1_{x,v}} \hspace{2mm} \leq \hspace{2mm} \left\| g (0, \cdot , \cdot ) \right\|_{L^1_{x,v}} + \int_0^t \int_{\Sigma_s} \int_{\R^3_v} \left| \T_{\phi}(g) \right| \frac{dv}{v^0} dx ds.$$ 
\end{Pro}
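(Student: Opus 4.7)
The plan is to derive a transport equation for $|g|$ and integrate it over $\Sigma_t \times \R^3_v$. Starting from the identity $\T_\phi(g) = v^0 \partial_t g + v^i \partial_i g + \sigma v^0 \nabla_x \phi \cdot \nabla_v g$, multiplying $\partial_t|g| = (g/|g|)\partial_t g$ by $v^0$, and rearranging, one obtains the pointwise identity
$$ v^0 \partial_t |g| + v^i \partial_i |g| + \sigma v^0 \nabla_x \phi \cdot \nabla_v |g| \hspace{2mm} = \hspace{2mm} \frac{g}{|g|} \hspace{1mm} \T_\phi(g). $$
Dividing by $v^0$ (which is harmless since $v \in \R^3_v$ means $v \neq 0$) yields an equation of the form $\partial_t|g| + \mathrm{div}_x(\cdot) + \mathrm{div}_v(\cdot) = \frac{g}{|g|} \T_\phi(g)/v^0$, where the fluxes depend on $x$ through $v^i/v^0$ (independent of $x$) and on $v$ through $\sigma \nabla_x \phi$ (independent of $v$).

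Next I would integrate over $\Sigma_s \times \R^3_v$. The spatial flux $\int_{\R^3_x} \partial_i(v^i |g|/v^0) dx$ vanishes by integration by parts since $v^i/v^0$ does not depend on $x$ and $g$ has sufficient decay. The velocity flux $\int_{\R^3_v} \partial_{v^i}(\sigma \partial^i \phi \cdot |g|) dv$ vanishes analogously since $\nabla_x \phi$ does not depend on $v$. Applying the triangle inequality $|(g/|g|) \T_\phi(g)| \leq |\T_\phi(g)|$ and integrating in $s$ from $0$ to $t$ then gives
$$ \int_{\Sigma_t}\int_{\R^3_v}|g| \hspace{1mm} dv \hspace{1mm} dx \hspace{2mm} \leq \hspace{2mm} \int_{\Sigma_0}\int_{\R^3_v}|g| \hspace{1mm} dv \hspace{1mm} dx+\int_0^t \int_{\Sigma_s}\int_{\R^3_v} |\T_\phi(g)| \hspace{1mm} \frac{dv}{v^0} \hspace{1mm} dx \hspace{1mm} ds, $$
which is the claimed bound.

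The only real obstacle is the non-smoothness of $|g|$ on the set $\{g = 0\}$, which prevents a direct justification of $\partial_t|g| = (g/|g|) \partial_t g$. The standard remedy is to replace $|g|$ by the smooth approximation $|g|_\varepsilon := \sqrt{g^2+\varepsilon^2}-\varepsilon$, for which $\partial|g|_\varepsilon = (g/\sqrt{g^2+\varepsilon^2}) \partial g$ holds classically and satisfies $|g/\sqrt{g^2+\varepsilon^2}| \leq 1$. Running the argument above with $|g|_\varepsilon$ produces the inequality with $|g|_\varepsilon$ in place of $|g|$, and a dominated convergence argument as $\varepsilon \to 0$ (using the assumed integrability of $g$ and $\T_\phi(g)/v^0$) recovers the stated estimate.
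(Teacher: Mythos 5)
Your proof is correct and follows essentially the same route as the paper: write $\T_\phi(|g|) = (g/|g|)\,\T_\phi(g)$, integrate over $\Sigma_s \times \R^3_v$, observe that the spatial and velocity flux terms vanish by integration by parts (since $v^i/v^0$ is $x$-independent and $\nabla_x\phi$ is $v$-independent), and finish with the triangle inequality. The only difference is that you spell out the regularization $|g|_\varepsilon = \sqrt{g^2+\varepsilon^2}-\varepsilon$ to justify differentiating $|g|$, a point the paper leaves implicit under its ``sufficiently regular'' hypothesis.
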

\begin{proof}
Using the definition of $\T_{\phi}$ and integration by parts in $x$ and in $v$, one has
\begin{eqnarray}
\nonumber \int_0^t \int_{\Sigma_s} \int_{\R^3_v} \T_{\phi}(|g|)  \frac{dv}{v^0} dx ds \hspace{-1mm} & = & \hspace{-1mm} \int_0^t \left( \int_{\Sigma_s} \int_{\R^3_v} \partial_t |g| dv dx + \int_{\R^3_v} \int_{\Sigma_s} \frac{v^i}{v^0} \partial_i |g| dv dx  + \sigma \int_{\Sigma_s} \partial^i \phi \int_{\R^3_v} \partial_{v^i} |g| dv dx \right) ds \\ \nonumber
& = & \hspace{-1mm} \int_0^t \partial_t \int_{\Sigma_s} \int_{\R^3_v} |g| dv dx ds \hspace{2mm} = \hspace{2mm} \int_{\Sigma_t} \int_{\R^3_v} |g| dv dx -\int_{\Sigma_0} \int_{\R^3_v} |g| dv dx .
\end{eqnarray}
It remains to use the triangle inequality and that $ \left| \T_{\phi}(|g|) \right| = \left| \frac{g}{|g|} \T(g) \right| \leq \left| \T_{\phi}(g) \right|$.
\end{proof}

\subsection{The problem caused by the small velocities}

In order to circumvent any problem with the characteristics of the operator $\T_{\phi}$ reaching the value $v=0 \notin \R^3_v$, we introduce 
$$ \T_{\phi}^{\chi} : g \mapsto v^{\mu} \partial_{\mu}g + \chi(|v|) \sigma v^0 \nabla_x \phi \cdot \nabla_v g, $$
where $\chi$ is the cutoff function defined by \eqref{defchi}, and we will first define our solution $(f,\phi)$ as a solution to the system
\begin{eqnarray}
 \T_{\phi}^{\chi} (f) & = & 0, \label{VP1bis} \\
\Delta \phi & = & \int_{\R^3_v} f dv. \label{VP2bis}
\end{eqnarray}
We will suppose that $f(0, \cdot, v)$ vanishes for all $|v| \leq 2$ and one step of the proof will consist in proving that 
\begin{equation}\label{eq:goalsupp}
\forall \hspace{0.5mm} |v| \leq 1, \hspace{1cm} f( \cdot , \cdot, v) \hspace{1mm} = \hspace{1mm} 0, \hspace{1cm} \text{ so that} \hspace{1cm} \T_{\phi}(f) \hspace{2mm} = \hspace{2mm} \T_{\phi}^{\chi} (f) \hspace{2mm} = \hspace{2mm} 0.
\end{equation}
For this, we will study the characteristics $(X,V)$ of the operator $\T_{\phi}^{\chi}$, which satisfy
$$ \dot{X}(t) \hspace{2mm} = \hspace{2mm} \frac{V(t)}{|V|(t)}, \hspace{2cm} \dot{V}(t) \hspace{2mm} = \hspace{2mm} \chi \Big( |V|(t) \Big) \hspace{0.5mm} \nabla_x \hspace{0.3mm} \phi \left( t,X(t) \right).$$
In view of the definition of $\chi$, none of the characteristics of $\T^{\chi}_{\phi}$ can reach $v=0$ since $\dot{V} =0$ if $|V| < 1$. Consequently, we can use the method of the characteristics to express $f(t,\cdot , \cdot )$ in terms of $f(0,\cdot , \cdot )$. More precisely, for each $(t,x,v) \in [0,T[ \times \R^3_x \times \R^3_v$, there exists a characteristic $(X,V)$ of $\T_{\phi}^{\chi}$ which is well defined on $[0,t]$, which takes its value in $\R^3_x \times \R^3_v$ and such that $(X(t),V(t))=(x,v)$, so that
\begin{equation}\label{charac}
 f(t,x,v) \hspace{2mm} = \hspace{2mm} f(t,X(t),V(t)) \hspace{2mm} = \hspace{2mm} f(0,X(0),V(0)).
\end{equation}
The problem of the operator $\T_{\phi}$ is that certain of its characteristics can reach $v=0$ and because of that,
\begin{itemize}
\item if the initial data do not vanish for small velocities, the massless relativistic Vlasov-Poisson could fail to admit a local $C^1$ solution (see Section $8$ of \cite{dim4}).
\item We cannot prove directly that \eqref{eq:goalsupp} holds using the characteristics $(\overline{X},\overline{V})$ of the operator $T_{\phi}$. Indeed, if $\overline{V}$ reaches the value $v=0 \notin \R^3_v$, no formula such as \eqref{charac} holds for $(\overline{X},\overline{V})$ since the characteristic would not be well defined for $t=0$.
\end{itemize}

\subsection{The problem caused by the non zero total charge}\label{subseccharge}

The total charge of the plasma
$$ Q \hspace{2mm} := \hspace{2mm} \int_{\R^3_x} \int_{\R^3_v} f dv dx$$
is a conserved quantity in time. If $Q \neq 0$, i.e. if the plasma is not electrically neutral, the potential $\phi$ has restricted decay properties. Indeed, denoting by $\mathbb{S}_{t,r}$ the sphere of radius $r$ centered in the origin of $\Sigma_t$, the divergence theorem gives
$$Q_{\phi}(t) \hspace{2mm} := \hspace{2mm} \lim_{r \rightarrow + \infty} \int_{\mathbb{S}_{t,r}} \partial_r \phi d \mathbb{S}_{t,r} \hspace{2mm} = \hspace{2mm} \hspace{2mm} \int_{\Sigma_t} \int_{\R^3_v} \Delta \phi dv dx \hspace{2mm} = \hspace{2mm} Q,$$
which implies that $\nabla_x \phi$ cannot decay faster than $r^{-2}$ if $Q \neq 0$. Consequently, one cannot expect to prove a better $L^2$ estimate than
$$ \| \nabla_x \phi \|_{L^2(\Sigma_t)} \hspace{2mm} \lesssim \hspace{2mm} \frac{\epsilon}{\sqrt{1+t}}.$$
In contrast, if the total charge $Q_{\psi}$ of a sufficiently regular function $\psi :[0,T[ \times \R^3 \rightarrow \R$ is equal to zero, one can expect $\nabla_x \psi$ to decay faster than $r^{-2}$. This will allow us to prove that the $L^2(\Sigma_t)$ norms of the chargeless derivatives of the potential $\phi$ decay as $(1+t)^{-1}$. The following lemma is then particularly interesting.
\begin{Pro}\label{Procharge}
For all $\widehat{Z}^{\gamma} \in \VV^{|\gamma|}$, we have
\begin{itemize}
\item $Q_{Z^{\gamma} \phi} = (-1)^{|\gamma|} Q_{\phi}$ if $Z^{\gamma} = S^{|\gamma|}$,
\item $Q_{Z^{\gamma} \phi} =0$ otherwise.
\end{itemize}
\end{Pro}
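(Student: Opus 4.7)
The plan is to proceed by induction on $|\gamma|$, starting from the divergence-theorem reformulation
$$Q_{Z^\gamma \phi}(t) \; = \; \int_{\Sigma_t} \Delta\bigl(Z^\gamma \phi\bigr)\, dx,$$
which requires only that $\Delta Z^\gamma \phi$ decay sufficiently fast in $x$. This decay is supplied by Proposition \ref{comu}, together with the pointwise bound on $\int_{\R^3_v} |\widehat{Z}^\xi f|\, dv$ already built into our framework. The base case $|\gamma|=0$ is exactly the computation displayed just before the statement, which gives $Q_\phi = Q$.

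For the induction step I would write $Z^\gamma = Z \cdot Z^{\gamma'}$ with $Z \in \Vv$ and $|\gamma'| = |\gamma|-1$, then split on whether $Z = S$ or not, using the commutators $[\Delta,\partial_\mu]=0$, $[\Delta,\Omega_{ij}]=0$ and $[\Delta,S]=2\Delta$ on functions of $(t,x)$. If $Z \in \Vv \setminus \{S\}$, then $\Delta(Z^\gamma\phi) = Z(\Delta Z^{\gamma'}\phi)$ and I would integrate over $\Sigma_t$: for $Z = \partial_i$ (spatial) one gets $0$ by integration by parts; for $Z = \Omega_{ij}$, IBP turns the integrand into $(-\delta_{ij}+\delta_{ji})\Delta Z^{\gamma'}\phi = 0$; for $Z = \partial_t$, commuting the $t$-derivative with the spatial integral produces $\partial_t Q_{Z^{\gamma'}\phi}(t)$, which vanishes because the induction hypothesis makes $Q_{Z^{\gamma'}\phi}$ either $0$ or $(-1)^{|\gamma'|}Q$, both constants in $t$ thanks to the conservation of $Q$. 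In all three subcases $Z^\gamma \neq S^{|\gamma|}$, matching the claim $Q_{Z^\gamma\phi}=0$.

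If $Z = S$, then $\Delta(SZ^{\gamma'}\phi) = (S+2)\Delta Z^{\gamma'}\phi$, so with $\psi := \Delta Z^{\gamma'}\phi$ the identity
$$\int_{\Sigma_t} S\psi\, dx \; = \; t\,\partial_t\!\int_{\Sigma_t} \psi\, dx \; - \; 3\int_{\Sigma_t} \psi\, dx$$
(integration by parts on the spatial part of $S$) combined with the inductive time-independence of $\int \psi\, dx = Q_{Z^{\gamma'}\phi}$ yields $Q_{Z^\gamma\phi} = -3\,Q_{Z^{\gamma'}\phi} + 2\,Q_{Z^{\gamma'}\phi} = -Q_{Z^{\gamma'}\phi}$. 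Applying the inductive hypothesis closes the two remaining subcases: if $Z^{\gamma'} = S^{|\gamma'|}$ then $Z^\gamma = S^{|\gamma|}$ and $Q_{Z^\gamma\phi} = -(-1)^{|\gamma'|}Q = (-1)^{|\gamma|}Q$; otherwise $Z^\gamma \neq S^{|\gamma|}$ and $Q_{Z^\gamma\phi} = 0$.

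The main technical obstacle is not the algebra, which is a short induction, but the justification of every boundary term and every exchange of $\partial_t$ with a spatial integral. For this I would rely on the pointwise bound of the Theorem (or the bootstrap assumption available at the point where Proposition \ref{Procharge} is invoked), giving $|\Delta Z^{\gamma'}\phi|(t,x) = \bigl|\sum_\xi C^{\gamma'}_\xi \int_{\R^3_v} \widehat{Z}^\xi f\, dv\bigr| \lesssim \epsilon (1+|x|)^{-2}\tau_-^{-2}$, which makes both $\int_{\mathbb{S}_R}|\Delta Z^{\gamma'}\phi|\, d\mathbb{S}_R$ and $R\int_{\mathbb{S}_R}|\Delta Z^{\gamma'}\phi|\, d\mathbb{S}_R$ vanish as $R \to \infty$ on every $\Sigma_t$, so that the divergence-theorem identity, the IBP producing $-3\int\psi\, dx$, and the differentiation under the integral in $t$ are all legitimate.
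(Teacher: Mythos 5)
Your proof is correct, and in the cases $Z=\partial_t$ and $Z=S$ it takes a genuinely different route from the paper. The paper works at the level of the Vlasov field: after the commutation formula reduces $Q_{Z\phi}$ to $\int_{\Sigma_t}\int_{\R^3_v}\widehat{Z}f\,dv\,dx$, the $\partial_t$ case is handled by inserting the Vlasov equation $\T_\phi(f)=0$ and integrating by parts in $x$ and $v$ (this is \eqref{eq:partialt}), and the $S$ case then combines that identity with a further $x$-integration by parts. You instead stay entirely at the level of the potential and the Laplacian: you use $[\Delta,\partial_t]=0$ to write $Q_{\partial_t Z^{\gamma'}\phi}=\partial_t Q_{Z^{\gamma'}\phi}$, which vanishes because the inductive hypothesis pins $Q_{Z^{\gamma'}\phi}(t)$ to a time-constant value; and for $S$ you use $[\Delta,S]=2\Delta$ together with the $x$-integration by parts producing $-3\int\psi\,dx$, again exploiting the time-constancy of the lower-order charge to drop the $t\partial_t$ term. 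Both arguments ultimately rest on the conservation of the base-case charge $Q$, but yours replaces repeated use of the Vlasov equation (and $v$-integration by parts, needed in the paper for the lifted rotations $\widehat{\Omega}_{ij}$) with a single use at $|\gamma|=0$ and the algebraic facts $[\Delta,Z]=0$, $[\Delta,S]=2\Delta$. What this buys is a shorter, more self-contained induction step and a cleaner accounting of exactly which boundary terms must vanish (only $x$-boundary terms on each $\Sigma_t$), which you correctly tie to the pointwise decay $|\Delta Z^{\gamma'}\phi|\lesssim\epsilon(1+|x|)^{-2}\tau_-^{-2}$. The paper's version has the advantage of being entirely local in $|\gamma|$ (it doesn't invoke the conclusion at lower order to get the vanishing of a $t$-derivative), but in the end both rely on the same conserved quantity, and the paper likewise punts the general $|\gamma|$ case to an induction. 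One cosmetic note: in the $\Omega_{ij}$ subcase, the clean way to see the vanishing is that integration by parts yields $-\int(\delta^i_j-\delta^j_i)\psi\,dx$, which is identically zero by symmetry of the Kronecker delta (your formula says the same thing, just phrased less transparently).
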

\begin{proof}
Let $Z \in \Vv \setminus \{S \}$. Then, by the commutation formula of Proposition \ref{comu0} and the divergence theorem,
$$Q_{Z \phi}(t) \hspace{2mm} = \hspace{2mm}  \int_{\Sigma_t} \int_{\R^3_v} \Delta Z \phi \hspace{0.5mm} dv dx \hspace{2mm} = \hspace{2mm} \int_{\Sigma_t} \int_{\R^3_v} \widehat{Z} f \hspace{0.5mm} dv dx.$$
If $Z = \partial_k$, $1 \leq k \leq 3$ (respectively $Z= \Omega_{ij}$, $1 \leq i < j \leq 3$), integration by parts in $x$ (respectively $x$ and $v$) gives $Q_{Z \phi}(t)=0$. If $Z = \partial_t$, then, using $\T_{\phi}(f)=0$, we get
\begin{equation}\label{eq:partialt}
 \int_{\Sigma_t} \int_{\R^3_v} \partial_t f \hspace{0.5mm} dv dx \hspace{2mm} = \hspace{2mm}  -\int_{\R^3_v} \frac{v^i}{v^0} \int_{\Sigma_t} \partial_i f \hspace{0.5mm} dv dx-\sigma \int_{\Sigma_t} \nabla_x \phi \cdot \int_{\R^3_v}  \nabla_v f \hspace{0.5mm} dv dx \hspace{2mm} = \hspace{2mm} 0 . 
\end{equation}
Applying Proposition \ref{comu0}, integrating by parts in $x$ and using \eqref{eq:partialt}, we get 
$$Q_{S \phi}(t)  \hspace{2mm} = \hspace{2mm} \int_{\Sigma_t} \int_{\R^3_v} t \partial_t f+x^i \partial_i f+2f \hspace{0.5mm} dv dx \hspace{2mm} = \hspace{2mm} - \int_{\Sigma_t} \int_{\R^3_v} f \hspace{0.5mm} dv dx .$$
The general case can be treated similarly making an induction on $|\gamma|$ and using the commutation formula of Proposition \ref{comu}) (we refer to the Appendix $C$ of \cite{massless} for a more detailed proof in the context of the Vlasov-Maxwell system). 
\end{proof}

\subsection{Derivatives preserving the symmetries of the solutions}\label{subsecsphrisymm}

\begin{Def}
Let $g : [0,\overline{T}] \times \R^3_x \times \R^3_v \rightarrow \R$ and $\psi : [0,\overline{T}] \times \R^3_x \rightarrow \R$, with $\overline{T} \geq 0$, be sufficiently regular function. We say that $g$ (respectively $\psi$) is spherically symmetric if
$$ \forall \hspace{0.5mm} 1 \leq i < j \leq 3, \hspace{1cm} \widehat{\Omega}_{ij} g \hspace{2mm} = \hspace{2mm} 0 \hspace{1cm} \text{(respectively} \hspace{0.4cm} \Omega_{ij} \psi \hspace{2mm} = \hspace{2mm} 0 \hspace{1mm} \text{).}$$
In that case, $\psi$ does not depend on the spherical variables $(\omega_1, \omega_2)$ and $g$ is a function of $(t,|x|,|v|,x \cdot v)$. We say that $(f,\phi)$ is a spherically symmetric solution to \eqref{VP1}-\eqref{VP2} if $f$ and $\phi$ are both spherically symmetric.
\end{Def}
The following properties will be useful for the remaining of this paper.
\begin{Pro}\label{Prospheri}
If $f(0,\cdot, \cdot )$ is spherically symmetric, then $(f,\phi)$ is a spherically symmetric solution to \eqref{VP1}-\eqref{VP2}. Moreover, 
\begin{itemize}
\item for all $\widehat{Z}^{\beta} \in \VV_{\Sp}^{|\beta|}$, $\widehat{Z}^{\beta} f$ is spherically symmetric provided that it is well defined.
\item For $Z^{\gamma} \in \Vv_{\Sp}^{|\gamma|}$, $Z^{\gamma} \phi$ is spherically symmetric provided that it is well defined.
\end{itemize}
\end{Pro}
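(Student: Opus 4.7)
\smallskip
\noindent\textit{Proof plan.~}
The strategy combines the rotational invariance of \eqref{VP1}--\eqref{VP2} with uniqueness of its classical solutions. For any $R \in \mathrm{SO}(3)$, I would set $\tilde{f}(t,x,v) := f(t,Rx,Rv)$ and $\tilde{\phi}(t,x) := \phi(t,Rx)$; since $|Rv|=|v|$, the change of variables $v \mapsto Rv$ has unit Jacobian, and the Laplacian is rotation invariant, one checks directly that $(\tilde{f},\tilde{\phi})$ is another classical solution of \eqref{VP1}--\eqref{VP2} with initial datum $f_0(R\,\cdot\,,R\,\cdot\,)$. By spherical symmetry of $f_0$ this rotated datum coincides with $f_0$, and uniqueness of the classical solution---which follows from the characteristic representation \eqref{charac} of the regularized operator $\T_{\phi}^{\chi}$ and a standard Gronwall argument---forces $\tilde{f}=f$ and $\tilde{\phi}=\phi$. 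Specializing $R$ to the one-parameter subgroup generated by rotation in the $(i,j)$-plane and differentiating at $R=\mathrm{Id}$ then yields $\widehat{\Omega}_{ij} f = 0$ and $\Omega_{ij}\phi = 0$, which is spherical symmetry of $(f,\phi)$.

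For the two bullet points I would argue by induction on $|\beta|$ (resp. $|\gamma|$), relying on the fact that every vector field in $\VV_{\Sp}$ (resp. $\Vv_{\Sp}$) commutes with the rotational generators \emph{modulo} the rotational generators themselves. Indeed $\partial_t$ commutes with every $\widehat{\Omega}_{ij}$; the two scalings $S$ and $S_v$ commute with $\widehat{\Omega}_{ij}$ by the degree-zero homogeneity of rotations in $x$ and in $v$; and $[\widehat{\Omega}_{kl},\widehat{\Omega}_{ij}]$ is itself a linear combination of rotational lifts, since the complete lift is a Lie algebra homomorphism and $\mathfrak{so}(3)$ is closed under brackets. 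Consequently
$$ \widehat{\Omega}_{ij}\bigl(\widehat{Z}^{\beta} f\bigr) \; = \; \widehat{Z}^{\beta}\bigl(\widehat{\Omega}_{ij} f\bigr) \; + \; \bigl[\widehat{\Omega}_{ij},\widehat{Z}^{\beta}\bigr] f, $$
and expanding the commutator by Leibniz one produces a sum of operators, each of which is a product of at most $|\beta|$ elements of $\VV_{\Sp}$ containing at least one factor $\widehat{\Omega}_{mn}$. Applied to $f$ each such term thus reads $\widehat{Y}^{\alpha}\bigl(\widehat{\Omega}_{mn} g\bigr)$ where $g$ is $f$ acted on by strictly fewer vector fields; by the induction hypothesis $\widehat{\Omega}_{mn} g = 0$, so the whole expression vanishes. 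Replacing $\VV_{\Sp}$ by $\Vv_{\Sp}$ and the Vlasov commutation formulas of Proposition \ref{comu0} by their Poisson analogues, the same computation proves that $Z^{\gamma}\phi$ is spherically symmetric.

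The only non-cosmetic obstacle is the uniqueness statement used in the first step: because of the integro-differential coupling with $\phi$ determined elliptically from $\int f\,dv$, uniqueness cannot be read off the characteristics of $\T_\phi$ directly. However, within the class of small-data $C^1$ solutions considered here, the regularization via $\chi$ together with standard elliptic estimates make $\nabla_x\phi$ locally Lipschitz in $x$, and the contraction argument for the characteristic system of $\T_{\phi}^{\chi}$ then gives uniqueness as usual.
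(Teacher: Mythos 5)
Your argument is correct, but it follows a genuinely different route from the paper's. For the first assertion, the paper does not invoke rotational invariance of the full nonlinear system: it simply commutes the equations with a rotation $\Omega_{ij}$ using Proposition~\ref{comu0}, which yields a \emph{linear} coupled system $\T_{\phi}(\widehat{\Omega}_{ij} f) = -\sigma v^0 \nabla_x \Omega_{ij}\phi \cdot \nabla_v f$, $\Delta\Omega_{ij}\phi = \int_{\R^3_v}\widehat{\Omega}_{ij} f\,dv$ in the unknowns $(\widehat{\Omega}_{ij} f, \Omega_{ij}\phi)$, with $(f,\phi)$ regarded as fixed coefficients. Since the pair vanishes at $t=0$, it vanishes identically. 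This shifts the uniqueness burden to a linear problem with Lipschitz coefficients, whereas your symmetry-group argument needs uniqueness for the full nonlinear Vlasov--Poisson system; you correctly flag this and sketch why it holds via the $\chi$-regularization, so there is no gap, but the paper's linearization is leaner and plugs directly into the commutation machinery (Propositions~\ref{comu0}--\ref{comu}) already established. Differentiating your family $R\mapsto(\tilde f,\tilde\phi)$ at $R=\mathrm{Id}$ of course reproduces exactly the paper's linear system, so the two views are dual.

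For the two bullet points, your induction is correct and is essentially what the paper does, with one simplification you might note: for the non-rotational elements of $\VV_{\Sp}$ and $\Vv_{\Sp}$ the relevant commutators are \emph{exactly} zero, $[\widehat{\Omega}_{ij},S]=[\widehat{\Omega}_{ij},S_v]=[\widehat{\Omega}_{ij},\partial_t]=0$ and likewise on the base, so the "modulo rotations" bookkeeping in your Leibniz expansion is only needed for factors that are themselves rotations, and in that case the corresponding term vanishes at the previous level of the induction because $\widehat{\Omega}_{mn}$ kills a spherically symmetric function. Your closing observation that the rotational complete lifts close under brackets, $[\widehat{\Omega}_{12},\widehat{\Omega}_{13}]=-\widehat{\Omega}_{23}$ and permutations, is exactly what justifies that last step and matches the Lie-algebra fact the paper tacitly relies on.
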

\begin{proof}
According to Proposition \ref{comu}, we have, for $\Omega$ a rotational vector field,
$$ \T(\widehat{\Omega}  f) \hspace{2mm} = - \sigma v^0 \nabla_x \Omega \phi \cdot \nabla_v f \hspace{1cm} \text{and} \hspace{1cm} \Delta \Omega \hspace{2mm} = \hspace{2mm} \int_{\R^3_v} \widehat{\Omega} f dv.$$
We then deduce that $\widehat{\Omega} f$ and $\Omega \phi$ vanish since it is initially true, which imply that $(f, \phi)$ is spherically symmetric. The last two points of the Proposition then ensue from $[ \Omega, S] = [ \Omega, \partial_t ] = [\widehat{\Omega}, S] = [\widehat{\Omega}, S_v]=[ \widehat{\Omega}, \partial_t ] =0$.
\end{proof}

\section{Bootstrap assumptions and strategy of the proof}\label{sec4phi}

In view of Proposition \ref{decay}, we introduce for all $(M,n) \in \mathbb{N} \times \mathbb{N}^*$ the following energy norms
$$\E_M^n[g](t) \hspace{2mm} = \hspace{2mm} \sum_{ |\alpha|+|\beta| \leq M } \int_{\Sigma_t} \int_{\R^3_v} r z^{n-1} \left| \widehat{Z}^{\beta} \left( \T_{\phi} \left( \widehat{Z}^{\alpha} g \right) \right) \right| \frac{dv}{v^0} dx + \sum_{ |\xi| \leq M } \int_{\Sigma_t} \int_{\R^3_v} \left| z^n \ZZ^{\xi} g \right| dv dx.$$
The goal of the remaining of this article is to prove Theorem \ref{theorem}. We then consider $f_0$ a function satisfying the assumptions of Theorem \ref{theorem} and $(f,\phi)$ the maximal solution to the modified massless relativistic Vlasov-Poisson system \eqref{VP1bis}-\eqref{VP2bis} such that $f(0,\cdot , \cdot ) = f_0$. Note that there exists $C>0$ such that\footnote{We refer to Appendix $B$ of \cite{massless} for the proof of a similar result which ensues from straightforward but tedious computations.} $\E_{N}^5[f](0) \leq C \epsilon$ and, considering possibly $\widetilde{\epsilon}=C \epsilon$, we can suppose without loss of generality that $C=1$. We fix for all the remaining of the proof $0 < \delta \leq \frac{1}{4}$ and we consider $T \geq 0$ the largest time such that
\begin{eqnarray}
\forall \hspace{0.5mm} t \in [0,T], \hspace{2.8cm} \E_{N-3}^5[f](t) & \leq & 3 \epsilon (1+t)^{\delta}, \label{boot1} \\ 
\forall \hspace{0.5mm} t \in [0,T], \hspace{3.15cm} \E_N^3[f](t) & \leq & 3 \epsilon (1+t)^{\delta}, \label{boot2} \\
\forall \hspace{0.5mm} t \in [0,T], \hspace{3.1cm} \E_N^2[f](t) & \leq & 3 \epsilon, \label{boot3} \\
\forall \hspace{0.5mm} |\beta| \leq N, \hspace{3mm} \forall \hspace{0.5mm} t \in [0,T], \hspace{1.2cm} \left\| \int_{\R^3_v} \left| \widehat{Z}^{\beta} f \right| dv \right\|_{L^2(\Sigma_t)} & \leq & C_{L^2} \frac{ \epsilon }{1+t}, \label{bootL2}
\end{eqnarray}
where $C_{L^2}>0$ is a sufficiently large constant which will be fixed in Section \ref{secL2}. In view of standard well-posedness arguments and the smallness assumptions on the particle density $f$, we have $T>0$. Theorem \ref{theorem} will then follow if we improve these boostrap assumptions, for $\epsilon$ small enough, independently of $T$. Using Proposition \ref{decay}, one immediately obtains from \eqref{boot1} and \eqref{boot3} that
\begin{eqnarray}\label{decayf}
 \forall \hspace{0.5mm} (t,x) \in [0,T] \times \R^3, \hspace{2mm} |\beta| \leq N-6, \hspace{1cm} \int_{v \in \R^3_v} z^3 |\widehat{Z}^{\beta} f|(t,x,v) dv & \lesssim & \epsilon \frac{(1+t)^{\delta}}{\tau_-^2 (1+r)^2}, \\ 
\forall \hspace{0.5mm} (t,x) \in [0,T] \times \R^3, \hspace{2mm} |\beta| \leq N-3, \hspace{1cm} \int_{v \in \R^3_v} |\widehat{Z}^{\beta} f|(t,x,v) dv & \lesssim &  \frac{\epsilon}{\tau_-^2 (1+r)^2} \label{decayf2}
\end{eqnarray}
The remaining of the proof is divided in three parts. First, we prove boundedness and pointwise decay estimates on the potential $\phi$, allowing us to prove that $f$ vanishes for all $|v| \leq 1$. Then, we improve the bootstrap assumptions \eqref{boot1}-\eqref{boot3} concerning the $L^1$ norms of the particle density. Finally, we improve the boostrap assumption \eqref{bootL2} on the $L^2$ norms of $f$.

\section{Estimates for the potential}\label{sec5phi}
The purpose of this section is to prove the following result.
\begin{Pro}\label{estiphi}
Let $Z^{\gamma} \in \Vv^{|\gamma|}$.
\begin{itemize}
\item If $|\gamma| \leq N$, then
$$ \forall \hspace{0.5mm} t \in [0,T[, \hspace{1cm} \left\| \nabla_x Z^{\gamma} \phi \right\|_{L^2(\Sigma_t)} \hspace{2mm} \lesssim \hspace{2mm} \frac{\epsilon}{\sqrt{1+t}}.$$
Moreover, if the total charge $Q_{Z^{\gamma} \phi}$ is equal to $0$, then
$$ \forall \hspace{0.5mm} t \in [0,T[, \hspace{1cm} \left\| \nabla_x Z^{\gamma} \phi \right\|_{L^2(\Sigma_t)}  \hspace{2mm} \lesssim  \hspace{2mm} \frac{\epsilon}{1+t}.$$
\item If $|\gamma| \leq N-3$, then
$$ \forall \hspace{0.5mm} (t,x)  \in [0,T[ \times \R^3, \hspace{1cm} \left| \nabla_x Z^{\gamma}  \phi \right|(t,x)  \hspace{2mm} \lesssim  \hspace{2mm} \frac{\epsilon}{(1+t)(1+|x|)}.$$
Moreover, if $Z^{\gamma} \phi$ is spherically symmetric, i.e. if $Z^{\gamma} \in \VV_{\mathbb{S}}^{|\gamma|}$, then
$$ \forall \hspace{0.5mm} (t,x)  \in [0,T[ \times \R^3, \hspace{1cm} \left| \nabla_x Z^{\gamma}  \phi \right|(t,x)  \hspace{2mm} \lesssim  \hspace{2mm} \frac{\epsilon}{(1+t+|x|)^2}.$$
\end{itemize}
\end{Pro}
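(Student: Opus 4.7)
The plan is to first commute the Poisson equation via Proposition~\ref{comu} to rewrite $\Delta Z^\gamma \phi = \rho_\gamma := \sum_{|\xi|\leq|\gamma|} C^\gamma_\xi \int_{\R^3_v} \widehat{Z}^\xi f\, dv$, then to establish the $L^2$ bounds on $\nabla Z^\gamma \phi$ from elliptic identities, and finally to deduce the pointwise estimates either from these $L^2$ bounds (via a weighted Sobolev inequality on $\R^3$) or, in the spherically symmetric case, from the explicit one-dimensional representation formula. Throughout I write $\psi := Z^\gamma \phi$.

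For the $L^2$ bound I would integrate by parts on $\Sigma_t$ to obtain $\|\nabla \psi\|_{L^2(\Sigma_t)}^2 = -\int_{\Sigma_t} \psi \rho_\gamma\, dx$ and apply Cauchy--Schwarz with a weight $q(t,r)$ of the form $q = (1+|x|)^{1-\alpha}\tau_-^{1/2+\alpha}$, for a small parameter $\alpha \in (0,1/2)$:
\begin{equation*}
\|\nabla \psi\|_{L^2(\Sigma_t)}^2 \leq \|\psi/q\|_{L^2(\Sigma_t)} \, \|q \rho_\gamma\|_{L^2(\Sigma_t)}.
\end{equation*}
A weighted Hardy-type inequality in the spirit of Appendix~B of \cite{LR} gives $\|\psi/q\|_{L^2} \lesssim \|\nabla \psi\|_{L^2}$, so that $\|\nabla \psi\|_{L^2} \lesssim \|q \rho_\gamma\|_{L^2}$. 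I would bound the right-hand side by splitting $\Sigma_t$ into the interior region $\{r \leq t/2\}$, the light-cone region $\{t/2 \leq r \leq 2t\}$ and the exterior region $\{r \geq 2t\}$. Away from the light cone, $\tau_- \gtrsim 1+t$, so the pointwise decay \eqref{decayf}--\eqref{decayf2} together with the $L^1$ bounds on the velocity averages of $f$ yield the required $t$-decay. Near the light cone, the pointwise decay is too weak and one must rely on the bootstrap assumption \eqref{bootL2}, which gives $\|\rho_\gamma\|_{L^2(\Sigma_t)} \lesssim \epsilon/(1+t)$. Combining these contributions produces the $\epsilon/\sqrt{1+t}$ rate; this rate is sharp in general because of the charge contribution $\nabla \psi \sim Q_\psi/|x|^2$ at spatial infinity discussed in Subsection~\ref{subseccharge}. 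When $Q_{Z^\gamma \phi} = 0$, which by Proposition~\ref{Procharge} holds whenever $Z^\gamma$ is not a pure power of $S$, the function $\psi$ decays faster at infinity and one can use a stronger weighted Hardy inequality tailored for chargeless functions to gain an extra factor $(1+t)^{-1/2}$, improving the rate to $\epsilon/(1+t)$.

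For the pointwise decay in the general case I would apply a weighted Sobolev inequality on $\R^3$ of the form
\begin{equation*}
(1+|x|)\, |\nabla Z^\gamma \phi|(t,x) \lesssim \sum_{|\kappa|\leq 2}\|\nabla Z^{\kappa} Z^\gamma \phi\|_{L^2(\Sigma_t)}
\end{equation*}
and insert the $L^2$ bound of the previous step to obtain $|\nabla Z^\gamma \phi| \lesssim \epsilon/((1+t)(1+|x|))$. For the spherically symmetric improvement, Proposition~\ref{Prospheri} ensures that $Z^\gamma \phi$ is spherically symmetric as soon as $Z^\gamma \in \Vv_{\Sp}^{|\gamma|}$, so the commuted Poisson equation reduces to a single ODE in $r$ that integrates to
\begin{equation*}
\partial_r Z^\gamma \phi(t,r) = \frac{1}{r^2}\int_0^r s^2\, \rho_\gamma(t,s)\, ds.
\end{equation*}
I would split the $s$-integration according to whether $s$ lies in $\{s \leq t/2\}$, $\{t/2 \leq s \leq 2t\}$ or $\{s \geq 2t\}$ and use \eqref{decayf2} in each piece, handling separately the cases $r$ small, $r \sim t$ and $r \gg t$. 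For $r \gg t$ I would rewrite $\int_0^r s^2 \rho_\gamma\, ds = -\int_r^\infty s^2 \rho_\gamma\, ds$ when $Z^\gamma \phi$ is chargeless, and otherwise retain the $O(\epsilon)$ total-charge contribution; in both cases the final rate is $\epsilon/(1+t+|x|)^2$.

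The main obstacle is the $L^2$ estimate in the chargeful case: the pointwise decay $\tau_-^{-2}$ of the velocity average is too weak to be converted into $t$-decay by the usual Hardy inequality $\|\psi/(1+|x|)\|_{L^2} \lesssim \|\nabla \psi\|_{L^2}$, which gives only an $\epsilon$ bound with no time decay, while at the same time the $1/|x|^2$ tail of $\nabla \psi$ forbids any stronger Hardy with a purely spatial weight. The resolution is to choose $q$ as a fractional balance between $(1+|x|)$ and $\tau_-$, to use the Lindblad--Rodnianski weighted Hardy inequality to absorb the $\psi$-factor, and to argue separately in the light-cone region using the bootstrap assumption \eqref{bootL2} (which is itself improved only later in Section~\ref{secL2}), where the pointwise estimates are insufficient.
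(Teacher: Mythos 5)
Your plan diverges from the paper's on the key structural point: the paper never invokes a weighted Hardy inequality or the identity $\|\nabla\psi\|_{L^2}^2 = -\int \psi\,\rho_\gamma$. Instead it splits the commutators into two families. For $Z^\gamma\in\Vv_\Sp^{|\gamma|}$ (in particular for $S^{|\gamma|}$, the only chargeful derivatives), it uses that $Z^\gamma\phi$ is radial and works directly with the one-dimensional representation $r^2\partial_r Z^\gamma\phi=\int_0^r\rho_\gamma\,\rho^2\,d\rho$; both the $\epsilon/\sqrt{1+t}$ bound and the chargeless improvement $\epsilon/(1+t)$ are obtained by splitting $r\leq 1$, $1\leq r\leq t$, $r\geq t$ and, crucially, by using $Q_{Z^\gamma\phi}=0$ to rewrite $r^2\partial_r Z^\gamma\phi=-\int_r^\infty\rho_\gamma\,\rho^2\,d\rho$ in the exterior region, then extracting $t$-decay from the weight $z\geq\tau_-$ in the energy $\mathbb{E}_N^2[f]$. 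For $Z^\gamma$ containing a spatial translation (hence automatically chargeless by Proposition~\ref{Procharge}), the paper does not use Hardy at all: it applies the Calder\'on--Zygmund identity $\|\nabla_x\nabla_x Z^\alpha\phi\|_{L^2}=\|\int_v g_\alpha\,dv\|_{L^2}$ to the once-reduced multi-index, and the bootstrap $L^2$ bound \eqref{bootL2} immediately gives $\epsilon/(1+t)$; pointwise decay then follows from a radial--spherical Sobolev inequality applied to $\nabla_x\nabla_x Z^\alpha\phi$, not to $Z^\gamma\phi$ directly. This two-track structure is what your proposal misses: the Calder\'on--Zygmund step, which is the reason no Hardy-type machinery is needed for non-radial derivatives, does not appear anywhere in your argument.

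Two genuine gaps remain. First, the claimed ``stronger weighted Hardy inequality tailored for chargeless functions'' is not an identified tool, and the paper explicitly remarks (in the discussion surrounding \eqref{eq:intro0}) that in the non-spherically-symmetric setting the Hardy route stalls precisely at the rate $\epsilon/\sqrt{1+t}$; the chargeless improvement is obtained here only through the explicit radial integration formula, which genuinely exploits spherical symmetry. As written, your argument does not prove the second displayed estimate. Second, your proposed Sobolev step $(1+|x|)|\nabla Z^\gamma\phi|\lesssim\sum_{|\kappa|\leq 2}\|\nabla Z^\kappa Z^\gamma\phi\|_{L^2}$ cannot yield $\epsilon/((1+t)(1+|x|))$ from the general $L^2$ bound: the sum on the right includes multi-indices $Z^\kappa Z^\gamma$ that may be pure powers of $S$ and hence chargeful, for which only $\epsilon/\sqrt{1+t}$ is available, giving the weaker pointwise rate $\epsilon/((1+|x|)\sqrt{1+t})$. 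The paper sidesteps this by first peeling off a translation, passing to $\nabla_x\nabla_x Z^\alpha\phi$, and applying the Sobolev inequality only to quantities already controlled by Calder\'on--Zygmund at the rate $\epsilon/(1+t)$; the remaining (radial) case is covered by the ODE computation. To repair your proof you should isolate the translation case, insert the Calder\'on--Zygmund identity, and restrict the Sobolev step to second-order spatial derivatives of $Z^\alpha\phi$.
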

\begin{Rq}
Recall from Proposition \ref{Procharge} that $Q_{Z^{\gamma} \phi} = (-1)^{|\gamma|} Q_{\phi}$ if $ Z^{\gamma} = S^{|\gamma|}$ and $ Q_{Z^{\gamma} \phi} = 0$ otherwise.
\end{Rq}
Proposition \ref{comu} gives us that, for all $|\gamma| \leq N$, there exists $C^{\gamma}_{\xi}$ such that
\begin{equation}\label{eq:lapla}
 \Delta Z^{\gamma} \phi \hspace{2mm} = \hspace{2mm} \int_{\R^3_v} g_{\gamma} dv  , \hspace{1cm} \text{with} \hspace{1cm} g_{\gamma} \hspace{2mm} := \hspace{2mm} \sum_{|\xi| \leq |\gamma|} C^{\gamma}_{\xi}  \widehat{Z}^{\xi} f .
 \end{equation}
Hence, by the bootstrap assumptions \eqref{boot3} and \eqref{bootL2} as well as the pointwise decay estimate \eqref{decayf2}, we have
\begin{eqnarray}
\forall \hspace{0.5mm} t \in [0,T[, \hspace{1cm} \int_{\Sigma_t} \int_{\R^3_v} z^2 |g_{\gamma}| dv dx & \lesssim & \epsilon \label{gL1} \\ 
\forall \hspace{0.5mm} t \in [0,T[, \hspace{1cm} \left\| \int_{\R^3_v} |g_{\gamma}| dv \right\|_{L^2(\Sigma_t)} & \lesssim & \frac{\epsilon}{1+t} \label{gL2} \\ 
\forall \hspace{0.5mm} (t,x) \in [0,T[ \times \R^3, \hspace{1cm}  \int_{\R^3_v}  |g_{\gamma}|(t,x,v) dv  & \lesssim & \frac{\epsilon}{(1+t+|x|)^2}, \hspace{1cm} \text{if} \hspace{2mm} |\gamma| \leq N-3 \label{gL8} .
\end{eqnarray}

\subsection{The spherically symmetric derivatives}

We derive here global bounds and pointwise decay estimates for the cases where $Z^{\gamma} \in \Vv_{\Sp}^{|\gamma|}$, so that, according to Proposition \ref{Prospheri}, $Z^{\gamma} \phi$ is spherically symmetric and \eqref{eq:lapla} then becomes
\begin{equation}\label{eq:laplar} 
\Delta Z^{\gamma} \phi \hspace{2mm} = \hspace{2mm} \frac{1}{r^2} \partial_r \left( r^2 \partial_r Z^{\gamma} \phi \right) \hspace{2mm} = \hspace{2mm} \int_{\R^3_v} g_{\gamma} dv.
\end{equation}

\begin{Pro}\label{phi0}
If $Z^{\gamma} \in \Vv_{\Sp}^{|\gamma|}$ and $|\gamma| \leq N-3$, we have
$$ \forall \hspace{0.5mm} t \in [0,T[ \times \R^3,  \hspace{1.5cm} |\nabla_x Z^{\gamma} \phi|(t,x) \hspace{2mm} \lesssim \hspace{2mm} \frac{\epsilon}{(1+t+|x|)^2}.$$
Moreover, this estimate also holds for the higher order derivatives $N-2 \leq |\gamma| \leq N$ in the region $|x| \geq 1$.
\end{Pro}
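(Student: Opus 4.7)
My plan is to exploit the spherical symmetry of $Z^{\gamma}\phi$ to turn \eqref{eq:laplar} into a one-dimensional ODE in $r$. Since $Z^{\gamma}\phi$ is smooth and spherically symmetric, regularity at the origin forces $r^2 \partial_r Z^{\gamma}\phi \to 0$ as $r \to 0$. Integrating $(r^2 \partial_r Z^{\gamma}\phi)_r = r^2 \int_{\R^3_v} g_{\gamma}\, dv$ from $0$ to $r$ and using spherical symmetry to identify the result with an integral over a ball, I obtain the key representation
$$ r^2 \partial_r Z^{\gamma}\phi(t,r) \hspace{2mm} = \hspace{2mm} \frac{1}{4\pi}\int_{B(0,r)}\int_{\R^3_v} g_{\gamma}(t,y,v)\, dv\, dy.$$
Since $\nabla_x Z^{\gamma}\phi = \partial_r Z^{\gamma}\phi \cdot (x/r)$, it suffices to bound $|\partial_r Z^{\gamma}\phi|$.

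I would then split into three regimes in $r$ and use the three bounds \eqref{gL1}--\eqref{gL8} as follows. In the exterior region $r \geq t/2$ with $r \geq 1$, the global $L^1$ bound \eqref{gL1} directly yields $r^2|\partial_r Z^{\gamma}\phi| \lesssim \epsilon$, hence $|\partial_r Z^{\gamma}\phi| \lesssim \epsilon/r^2 \lesssim \epsilon/(1+t+r)^2$. In the interior region $1 \leq r \leq t/2$, every point of $B(0,r)$ satisfies $\tau_- \gtrsim 1+t$ and hence $z \gtrsim 1+t$ by Lemma \ref{zprop}; the weighted bound \eqref{gL1} then gives $\int_{B_r}\int|g_{\gamma}|\,dv\,dy \lesssim (1+t)^{-2}\int z^2|g_{\gamma}|\,dv\,dy \lesssim \epsilon(1+t)^{-2}$, so that $|\partial_r Z^{\gamma}\phi| \lesssim \epsilon r^{-2}(1+t)^{-2} \lesssim \epsilon/(1+t+r)^2$. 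These two cases already cover every $r \geq 1$ and only use ingredients available at the top order, which proves the final assertion of the Proposition. For the remaining small-radius region $r \leq 1$ (only needed for $|\gamma| \leq N-3$), I use the pointwise decay \eqref{gL8}, which gives $\int_{B_r}\int|g_{\gamma}|\,dv\,dy \lesssim r^3\epsilon(1+t)^{-2}$ and hence $|\partial_r Z^{\gamma}\phi| \lesssim r\epsilon(1+t)^{-2} \lesssim \epsilon/(1+t+r)^2$ since $1+t+r \sim 1+t$ there.

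The main obstacle would naively be the interior estimate at top order, where we cannot invoke the pointwise estimate \eqref{gL8} on $\int|g_{\gamma}|\,dv$. What saves the day is precisely the presence of the weighted $L^1$ norm controlled by $\E_N^2[f]$ in the bootstrap hierarchy: the factor $z^2 \geq \tau_-^2$ plays exactly the role that a pointwise bound would play, since on a ball of radius $r \leq t/2$ it can be pulled out as a constant $\gtrsim (1+t)^2$. This is the only place in the argument where the spherical symmetry assumption is used in an essential way, as it collapses the Poisson equation to an ODE whose solution is explicitly controlled by the mass inside a ball.
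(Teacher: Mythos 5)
Your proof is correct and follows essentially the same route as the paper: the same fundamental theorem of calculus identity $r^2\partial_r Z^\gamma\phi = \int_0^r\int_{\R^3_v} g_\gamma\, dv\, \rho^2 d\rho$, the same three regimes $r\leq 1$, $1\leq r\leq t/2$, $r\geq\max(1,t/2)$, and the same ingredients (\eqref{gL8} near the origin, \eqref{gL1} combined with $\tau_-\leq z$ from Lemma \ref{zprop} in the interior, \eqref{gL1} alone in the exterior). The only cosmetic differences are that you phrase the radial integral as a ball integral with the $1/(4\pi)$ prefactor, justify $r^2\partial_r Z^\gamma\phi\to 0$ by regularity at the origin where the paper leaves this implicit, and you correctly observe at the end that the two cases with $r\geq 1$ require nothing beyond top-order control, giving the final assertion.
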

\begin{proof}
Let $|\gamma| \leq N$. Start by integrating \eqref{eq:laplar} between $0$ and $r$ in order to get
\begin{equation}\label{eq:laplar1}
 r^2 \partial_r Z^{\gamma} \phi \hspace{2mm} = \hspace{2mm} \int_{\rho=0}^r \int_{\R^3_v} g_{\gamma} dv \rho^2 d\rho.
 \end{equation}
Let us consider several cases.
\begin{itemize}
\item If $r \leq 1$ and $|\gamma| \leq N-3$, \eqref{gL8} gives
$$ \left| \partial_r Z^{\gamma} \phi \right|(t,r) \hspace{2mm} \lesssim \hspace{2mm} \frac{1}{r^2} \int_0^r \frac{\epsilon}{(1+t)^2} \rho^2 d \rho \hspace{2mm} \lesssim \hspace{2mm} \frac{\epsilon r^3}{(1+t)^2 r^2} \hspace{2mm} \leq \hspace{2mm} \frac{\epsilon}{(1+t)^2} .$$
\item If $r \geq 1$ and $t \geq 2r$, we get by Lemma \ref{zprop} and \eqref{gL1},
\begin{eqnarray}
\nonumber \left| \partial_r Z^{\gamma} \phi \right|(t,r) & \lesssim & \frac{1}{r^2} \int_0^r \int_{\R^3_v} |g_{\gamma}| dv \rho^2 d \rho \hspace{2mm} \lesssim \hspace{2mm} \int_0^r \int_{\R^3_v} \frac{z^2}{1+(t-\rho)^2} |g_{\gamma}| dv \rho^2 d \rho \\ \nonumber
 & \lesssim & \frac{1}{1+t^2} \int_0^r \int_{\R^3_v} z^2|g_{\gamma}| dv \rho^2 d \rho \hspace{2mm} \lesssim \hspace{2mm} \frac{1}{(1+t)^2} \int_{\Sigma_t} \int_{\R^3_v} z^2|g_{\gamma}| dv dx \hspace{2mm} \lesssim \hspace{2mm} \frac{\epsilon}{(1+t)^2}.
 \end{eqnarray}
\item Otherwise, $r \geq 1$ and $r \geq \frac{t}{2}$, so that $4r^2 \geq (1+r)^2$. One then gets, using again \eqref{gL1},
$$ \left| \partial_r Z^{\gamma} \phi \right|(t,r) \hspace{2mm} \lesssim \hspace{2mm} \frac{1}{r^2} \int_0^r \int_{\R^3_v} |g_{\gamma}| dv \rho^2 d \rho \hspace{2mm} \lesssim \hspace{2mm} \frac{1}{(1+r)^2} \int_{\Sigma_t} \int_{\R^3_v} |g_{\gamma}| dv  dx \hspace{2mm} \lesssim \hspace{2mm} \frac{\epsilon}{(1+r)^2}.$$
\end{itemize}
\end{proof}
We now prove the following $L^2$ bounds.
\begin{Pro}\label{phil}
Let $Z^{\gamma} \in \Vv^{|\gamma|}_{\Sp}$ with $|\gamma| \leq N$. Then,
\begin{eqnarray}
\nonumber \forall \hspace{0.5mm} t \in [0,T[, \hspace{1cm}  \left\| \nabla_x Z^{\gamma} \phi \right\|_{L^2(\Sigma_t)} & \lesssim & \frac{\epsilon}{1+t} \hspace{1.4cm} \text{if} \hspace{2mm} Q_{Z^{\gamma} \phi}=0, \\ \nonumber
\forall \hspace{0.5mm} t \in [0,T[, \hspace{1cm}  \left\| \nabla_x Z^{\gamma} \phi \right\|_{L^2(\Sigma_t)} & \lesssim & \frac{\epsilon}{\sqrt{1+t}} \hspace{1cm} \text{otherwise}.  
\end{eqnarray}
\end{Pro}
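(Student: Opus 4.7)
The plan is to exploit the spherical symmetry of $Z^{\gamma} \phi$ to reduce to a radial identity. Integrating \eqref{eq:laplar} gives
\[
r^2 \partial_r Z^{\gamma} \phi (t,r) \; = \; M(t,r) \; := \; \int_0^r \widetilde h(t,\rho) \rho^2 \, d\rho, \qquad \widetilde h(t,\rho) \; := \; \int_{\R^3_v} g_{\gamma}(t,\rho \omega, v) \, dv,
\]
and spherical symmetry yields $\| \nabla_x Z^{\gamma} \phi \|_{L^2(\Sigma_t)}^2 \lesssim \int_0^{\infty} r^{-2} M(t,r)^2 \, dr$. The three ingredients I would use are: the $L^2$ bound \eqref{gL2} on $\widetilde h$; the pointwise inequality $|\widetilde h (\rho)| \leq \tau_-^{-2}(\rho) H(\rho)$ with $H(\rho) := \int_{\R^3_v} z^2 |g_{\gamma}| \, dv$, which follows from $z \geq \tau_-$ (Lemma \ref{zprop}) together with the weighted $L^1$ bound $\int_0^{\infty} H(\rho) \rho^2 \, d\rho \lesssim \epsilon$ coming from \eqref{gL1}; and the monotonicity $\tau_-(\rho) \geq \tau_-(r)$ whenever $\rho \leq r \leq t$ (and, symmetrically, $\tau_-(\rho) \geq \tau_-(r)$ when $\rho \geq r \geq t$).

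In the inner region $r \leq 1$, to deal with the singularity at the origin, I apply Cauchy-Schwarz with \eqref{gL2}, obtaining $|M(r)| \lesssim \epsilon r^{3/2} (1+t)^{-1}$, and thus $\int_0^1 M^2 r^{-2} \, dr \lesssim \epsilon^2 (1+t)^{-2}$. In the intermediate region $1 \leq r \leq t$, the monotonicity of $\tau_-$ allows me to pull the weight outside the integral:
\[
|M(r)| \; \leq \; \tau_-^{-2}(r) \int_0^r H(\rho) \rho^2 \, d\rho \; \lesssim \; \frac{\epsilon}{\tau_-^2(r)}.
\]
Splitting this region at $r = t/2$ and integrating $\epsilon^2 \tau_-^{-4}(r) r^{-2}$ directly yields a contribution $\lesssim \epsilon^2(1+t)^{-2}$.

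The outer region $r \geq t$ is where the chargeless assumption enters. In the general case, the crude bound $|M(r)| \leq \int_{\R^3} |\widetilde h| \, dx \leq \int z^2 |g_{\gamma}| \, dv \, dx \lesssim \epsilon$ gives $\int_t^{\infty} M^2 r^{-2} \, dr \lesssim \epsilon^2 (1+t)^{-1}$; summing the three regions delivers $\|\nabla_x Z^{\gamma} \phi\|_{L^2(\Sigma_t)} \lesssim \epsilon (1+t)^{-1/2}$. If $Q_{Z^{\gamma}\phi} = 0$, I rewrite $M(r) = - \int_r^{\infty} \widetilde h(\rho) \rho^2 \, d\rho$ and apply the $\tau_-$-monotonicity in reverse to get $|M(r)| \lesssim \epsilon \tau_-^{-2}(r)$ for $r \geq t$ as well. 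Splitting at $r = 2t$ and computing the weighted integrals bounds this piece by $\epsilon^2(1+t)^{-2}$, improving the total to $\|\nabla_x Z^{\gamma} \phi\|_{L^2(\Sigma_t)} \lesssim \epsilon (1+t)^{-1}$.

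The main delicate point I expect is the transition region $r \simeq t$, where $\tau_-(r)$ degenerates to the value $1$, precisely where the source has the weakest $\tau_-$-decay. Once the monotonicity argument has been invoked to pull the weight out, the remaining integrals $\int \tau_-^{-4}(r) r^{-2} \, dr$ are finite on each sub-interval and carry exactly the desired powers of $(1+t)$; the only genuine care is in the bookkeeping of which exponent appears in which sub-region.
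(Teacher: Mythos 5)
Your proposal is correct and rests on the same core mechanism as the paper's proof: reduction to a radial ODE, the monotonicity of $\tau_-$ to pull the $z$-weight out of the $\rho$-integral and trade $\tau_-^{-2}(\rho)$ for $\tau_-^{-2}(r)$, the bound \eqref{gL1} for bulk mass, the bound \eqref{gL2} near the origin, and, in the chargeless case, rewriting $r^2\partial_r Z^{\gamma}\phi$ as the tail integral $-\int_r^{\infty}$ so that the monotonicity works in the outer region as well.

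The two formulations differ in technique at two places, both in your favor for clarity. First, at the origin: the paper multiplies the radial ODE by $r^4\partial_r Z^{\gamma}\phi$, integrates, applies Cauchy--Schwarz to the resulting product, and absorbs $\|\partial_r Z^{\gamma}\phi\|_{L^2(|x|\le 1)}$ from the right-hand side; you instead apply Cauchy--Schwarz directly on $M(r)=\int_0^r\widetilde h\,\rho^2 d\rho$ to get $|M(r)|\lesssim \epsilon\, r^{3/2}(1+t)^{-1}$, which kills the $r^{-2}$ singularity in $\int M^2 r^{-2}\,dr$ outright. Both use \eqref{gL2}; yours avoids the absorption step. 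Second, for $r\ge 1$: the paper feeds the pointwise bound $|\partial_r Z^{\gamma}\phi|\lesssim\epsilon(1+t+r)^{-2}$ from Proposition~\ref{phi0} into a quadratic estimate (which requires extending that proposition to all $|\gamma|\le N$ in $|x|\ge 1$), whereas you bound $M(r)$ directly by $\epsilon\,\tau_-^{-2}(r)$, which is self-contained and does not invoke Proposition~\ref{phi0} at all. These are equivalent because $|\partial_r Z^{\gamma}\phi|=|M|/r^2$. The only point your sketch glosses over, which the paper addresses explicitly, is the justification that $r^2\partial_r Z^{\gamma}\phi\to 0$ as $r\to\infty$ when $Q_{Z^{\gamma}\phi}=0$; this follows from $M(\infty)=\frac{1}{4\pi}\int_{\Sigma_t}\int_{\R^3_v}g_{\gamma}\,dv\,dx=\frac{1}{4\pi}Q_{Z^{\gamma}\phi}$, and it is worth stating since it is precisely where the charge hypothesis enters.
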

\begin{proof}
We fix $Z^{\gamma} \in \Vv^{|\gamma|}_{\Sp}$ with $|\gamma| \leq N$ and we first consider the region $r \leq 1$. Multiplying \eqref{eq:laplar} by $r^4 \partial_r Z^{\gamma} \phi $ and integrating between $0$ and $r$, we get
\begin{equation}\label{eq:proofL2phi}
 r^4 |\partial_r Z^{\gamma} \phi |^2 \hspace{2mm} = \hspace{2mm} \int_{\rho=0}^r \int_{\R^3_v} g_{\gamma} dv \cdot  \partial_r Z^{\gamma} \phi \cdot \rho^4 d\rho \hspace{2mm} \leq \hspace{2mm} \Big| \int_{\rho=0}^r \Big| \rho^2 \int_{\R^3_v} g_{\gamma} dv \Big|^2 \rho^2 d\rho \int_{\rho=0}^r  |\partial_r Z^{\gamma} \phi |^2 \rho^2 d \rho \Big|^{\frac{1}{2}} .
 \end{equation}
Integrating in the variable $r$ between $0$ and $1$, we obtain
$$ \left\| \partial_r Z^{\gamma} \phi \right\|_{L^2(|x| \leq 1)}^2 \hspace{2mm} \lesssim \hspace{2mm} \int_{r=0}^1  \left\| \frac{\rho^2}{r^2} \int_{\R^3_v} g_{\gamma} dv \right\|_{L^2(\rho \leq r)} \hspace{-0.3mm} \left\| \partial_r Z^{\gamma} \phi \right\|_{L^2(\rho \leq r)} dr \hspace{2mm} \lesssim \hspace{2mm} \left\| \int_{\R^3_v}  g_{\gamma} dv \right\|_{L^2(\Sigma_t)} \hspace{-0.3mm} \left\| \partial_r Z^{\gamma} \phi \right\|_{L^2(|x| \leq 1)} \hspace{-0.3mm} .$$
Using \eqref{gL2}, we finally get
\begin{equation}\label{regionrsmall}
 \left\| \partial_r Z^{\gamma} \phi \right\|_{L^2(|x| \leq 1)} \hspace{2mm} \lesssim \hspace{2mm} \frac{\epsilon}{1+t}.
 \end{equation}
Now, recall from Proposition \ref{phi0} that $ |\partial_r Z^{\gamma} \phi |(t,x) \lesssim \epsilon (1+t+|x|)^{-2}$ for all $|x| \geq 1$. Hence,
$$ \left\| \partial_r Z^{\gamma} \phi \right\|_{L^2(|x| \geq 1)}^2 \hspace{2mm} \lesssim \hspace{2mm} \epsilon \int_{r=1}^{+ \infty} \frac{  r^2 }{(1+t+r)^4} dr \hspace{2mm} \lesssim \hspace{2mm} \epsilon  \int_{r=0}^{+ \infty} \frac{ dr }{(1+t+r)^2}  \hspace{2mm} \lesssim \hspace{2mm} \frac{\epsilon}{1+t}.$$
This implies the result in the general case, which will be sufficient for us to improve all the bootstrap assumptions. Let us now improve the decay rate for the chargeless derivatives of the potential, i.e. we suppose that $Q_{Z^{\gamma} \phi} =0$. According to \eqref{regionrsmall}, we only need to consider the regions $1 \leq r \leq t$ and $r \geq \max(1,t)$, so that $ |Z^{\gamma} \phi |(t,x) \lesssim \epsilon (1+t+|x|)^{-2}$. By \eqref{eq:laplar1} and using $\tau_- \leq z$ (see Lemma \ref{zprop}), we have
$$\forall \hspace{0.5mm} r \geq 1, \hspace{1cm} r^2 | \partial_r Z^{\gamma} \phi |^2 \hspace{2mm} \leq \hspace{2mm} \int_{\rho=0}^r \int_{\R^3_v} g_{\gamma} dv \rho^2 d \rho \cdot \partial_r Z^{\gamma} \phi \hspace{2mm} \lesssim \hspace{2mm} \frac{\epsilon}{(1+t)^2} \int_0^r \int_{\R^3_v} \frac{z^2}{(1+|t-\rho|)^2} g_{\gamma} dv \rho^2 d \rho .$$
Integrating the last inequality between $r=1$ and $t$, we obtain, using \eqref{gL1},
\begin{equation}\label{L2phichargeless}
\hspace{-1mm} \int_{r=1}^t  | \partial_r Z^{\gamma} \phi |^2 r^2 dr \hspace{2mm} \lesssim \hspace{2mm} \frac{\epsilon}{(1+t)^2} \int_{r=1}^t \frac{1}{(1+t-r)^2} \int_0^r \int_{\R^3_v} z^2 g_{\gamma} dv \rho^2 d \rho dr \hspace{2mm} \lesssim \hspace{2mm} \frac{\epsilon^2}{(1+t)^2}  \int_{r=0}^t \frac{dr}{(1+t-r)^2} .
\end{equation}
Finally, note that, as $Z^{\gamma} \phi$ is chargeless and spherically symmetric,
$$\lim_{r \rightarrow +\infty} r^2 \partial_r Z^{\gamma} \phi \hspace{2mm} = \hspace{2mm} \frac{1}{4 \pi} \lim_{r \rightarrow +\infty} \int_{\Sp_{t,r}} \partial_r Z^{\gamma} \phi d \Sp_{t,r} \hspace{2mm} = \hspace{2mm} Q_{Z^{\gamma} \phi} \hspace{2mm} = \hspace{2mm} 0.$$  Hence, multiplying \eqref{eq:laplar} by $r^2$ and integrating between $r \geq t$ and $+ \infty$, we get
$$ r^2 |\partial_r Z^{\gamma} \phi |^2 \hspace{2mm} = \hspace{2mm} -\int_{\rho = r}^{+\infty} \int_{\R^3_v} g_{\gamma} dv \rho^2 d \rho \cdot \partial_r Z^{\gamma} \phi \hspace{2mm} \lesssim \hspace{2mm} \frac{\epsilon}{(1+t)^2} \int_{\rho = r}^{+\infty} \int_{\R^3_v} \frac{z^2}{(1+|t-\rho|)^2} |g_{\gamma}| dv \rho^2 d \rho .$$
Integrating the last inequality between $r=t$ and $+\infty$, one obtains, using \eqref{gL1},
$$ \int_{r=t}^{+\infty}  | \partial_r Z^{\gamma} \phi |^2 r^2 dr \hspace{1.9mm} \lesssim \hspace{1.9mm} \frac{\epsilon}{(1+t)^2} \int_{r=t}^{+\infty} \frac{1}{(1+r-t)^2} \int_r^{+ \infty} \int_{\R^3_v} z^2 g_{\gamma} dv \rho^2 d \rho dr \hspace{1.9mm} \lesssim \hspace{1.9mm} \frac{\epsilon^2}{(1+t)^2}  \int_{r=t}^{+\infty} \frac{dr}{(1+r-t)^2} .$$
Then, $\left\| \nabla_x Z^{\gamma} \phi \right\|_{L^2(\Sigma_t)}\lesssim \epsilon (1+t)^{-1}$ follows from the last estimate, \eqref{L2phichargeless} and \eqref{regionrsmall}.
\end{proof} 

\subsection{The other derivatives}

If there is at least one space translation composing $Z^{\gamma}$, the function $Z^{\gamma} \phi$ is not spherically symmetric and we cannot make the same computations as in the previous subsection in order to prove estimates on it. Instead, we take advantage of the fact that for $\psi$ solution to 
$$ \Delta \psi \hspace{2mm} = \hspace{2mm} F,$$
with $F$ a sufficiently regular function, $\nabla_x \nabla_x \psi$ has a better behavior than $\nabla_x \psi$. More precisely, applying the Calder\'on-Zygmund equality and then using the estimate \eqref{gL2}, one has
\begin{equation}\label{calde}
\forall \hspace{0.5mm} |\alpha| \leq N, \hspace{1cm} \left\| \nabla_x \nabla_x Z^{\alpha} \phi \right\|_{L^2(\Sigma_t)} \hspace{2mm} = \hspace{2mm} \left\| \int_{\R^3_v} g_{\alpha} dv \right\|_{L^2(\Sigma_t)} \hspace{2mm} \lesssim \hspace{2mm} \frac{\epsilon}{1+t},
\end{equation}
which will allow us to prove the following result.
\begin{Pro}\label{estinosph}
Let $Z^{\gamma} \in \Vv^{|\gamma|}$, with $1 \leq |\gamma| \leq N$ and which is composed of at least one space translation $\partial_i$, $i \in \{1,2,3\}$. Then,
\begin{eqnarray}
\forall \hspace{0.5mm} t \in [0,T[, \hspace{1cm} \left\| \nabla_x Z^{\gamma} \phi \right\|_{L^2(\Sigma_t)} & \lesssim & \frac{\epsilon}{1+t} , \label{energyphi2} \\
\forall \hspace{0.5mm} (t,x) \in [0,T[ \times \R^3, \hspace{1cm} \left| \nabla_x Z^{\gamma} \phi \right| (t,x) & \lesssim & \frac{\epsilon}{(1+t)(1+r)} \hspace{1cm} \text{if} \hspace{2mm} |\gamma| \leq N-2.
\end{eqnarray}
\end{Pro}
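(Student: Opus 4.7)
The key observation is that since $Z^\gamma$ contains a spatial translation $\partial_j$, one may commute $\partial_j$ to the leftmost position of $Z^\gamma$ and thereby convert the outer $\nabla_x$ into a second spatial derivative of a lower-order $Z^{\gamma'}\phi$. The Calderón-Zygmund bound \eqref{calde} then delivers the $L^2$ estimate, and the pointwise bound follows from this $L^2$ control together with a spatial Klainerman-Sobolev inequality.

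\textbf{$L^2$ estimate.} For every $Z\in\Vv$, the bracket $[\partial_j,Z]$ is either $0$ or $\pm\partial_k$ for some $k$: one checks $[\partial_j,S]=\partial_j$, $[\partial_j,\widehat{\Omega}_{ik}]=\delta_{ji}\partial_k-\delta_{jk}\partial_i$, and the remaining brackets vanish. Iterating this commutation inside $Z^\gamma = Z_{i_1}\cdots Z_{i_n}$, starting from the leftmost occurrence of a spatial translation and pushing it all the way to position one, produces a decomposition
\begin{equation*}
Z^\gamma\phi \;=\; \sum_\alpha c_\alpha\,\partial_{j_\alpha}\widetilde Z^{\beta_\alpha}\phi,\qquad |\beta_\alpha|=|\gamma|-1\leq N-1,\quad \widetilde Z^{\beta_\alpha}\in\Vv^{|\beta_\alpha|}.
\end{equation*}
Hence $\nabla_x Z^\gamma\phi$ is a finite sum of pure second-order spatial derivatives $\nabla_x\partial_{j_\alpha}\widetilde Z^{\beta_\alpha}\phi$, and \eqref{calde} applied to each $\widetilde Z^{\beta_\alpha}$ yields $\|\nabla_x Z^\gamma\phi\|_{L^2(\Sigma_t)}\lesssim \epsilon/(1+t)$.

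\textbf{Pointwise estimate.} Combining the Sobolev inequality on the sphere (Lemma \ref{Sobsphere}) with a one-dimensional Sobolev embedding in the logarithmic radial variable $\rho=\log r$, one obtains the spatial Klainerman-Sobolev-type inequality
\begin{equation*}
(1+|x|)\,|\psi|(t,x)\;\lesssim\;\sum_{|\alpha|\leq 2,\;Z^\alpha\in\Vv^{|\alpha|}}\|Z^\alpha\psi\|_{L^2(\Sigma_t)},
\end{equation*}
valid for any sufficiently decaying $\psi:\Sigma_t\to\R$ (in the region $|x|\leq 1$, a standard three-dimensional Sobolev embedding is used instead). Applied with $\psi=\nabla_x Z^\gamma\phi$, after commuting each $Z^\alpha$ through $\nabla_x Z^\gamma$---the brackets $[\nabla_x,Z^\alpha]$ being again spatial translations or zero---the right-hand side is bounded by a sum of terms $\|\nabla_x Z^{\gamma''}\phi\|_{L^2(\Sigma_t)}$ with $|\gamma''|\leq|\gamma|+2\leq N$, each $Z^{\gamma''}$ still containing a spatial translation (inherited from $Z^\gamma$). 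The first step then provides $\epsilon/(1+t)$ for every such term, yielding the desired pointwise bound.

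\textbf{Main obstacle.} The subtle point is the bookkeeping of the commutations: at every stage of reducing $Z^\gamma$, respectively $Z^\alpha(\nabla_x Z^\gamma)$, one must verify that the resulting operators retain a spatial translation so that the Calderón-Zygmund-based $L^2$ estimate can be applied to each summand. The loss of two derivatives between $|\gamma|\leq N$ (for $L^2$) and $|\gamma|\leq N-2$ (for $L^\infty$) is the natural Sobolev cost in three dimensions and is consistent with the statement.
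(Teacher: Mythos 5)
Your proof is correct and follows essentially the same route as the paper: commute a spatial translation to the leftmost position (via $[Z,\partial_i]\in\{0,\pm\partial_k\}$) and invoke the Calder\'on--Zygmund bound \eqref{calde} for the $L^2$ estimate, then apply a spatial Klainerman--Sobolev argument (radial Morrey plus angular $L^2$ Sobolev, which the paper carries out explicitly rather than citing a named inequality) together with the same commutation bookkeeping for the pointwise bound. Two cosmetic imprecisions worth noting but not affecting validity: the iterated brackets only give $|\beta_\alpha|\leq|\gamma|-1$, not equality; and the Sobolev ingredient on the sphere is the $L^2$-based inequality $\|v\|_{L^{\infty}(\Sp^2)}\lesssim\|v\|_{L^2(\Sp^2)}+\|\slashed{\nabla}v\|_{L^2(\Sp^2)}+\|\slashed{\nabla}\slashed{\nabla}v\|_{L^2(\Sp^2)}$ used in the paper's proof, not Lemma~\ref{Sobsphere} as stated, which gives $L^1$--$L^{\infty}$ bounds for velocity averages.
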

\begin{proof}
Note first that, for all $i \in \{1,2,3\}$ and $Z \in \Vv$,
 \begin{equation}\label{comuonetra}
 [Z, \partial_i]=0 \hspace{1cm} \text{or} \hspace{1cm} \exists \hspace{0.5mm} k \in \{1,2,3\}, \hspace{3mm} [Z, \partial_i]= \pm \partial_k.
 \end{equation}
Consequently, if $Z^{\gamma}$ satisfies the hypotheses of the proposition, we have
\begin{equation}\label{eq:wlog}
 \left| \nabla_x Z^{\gamma} \phi \right| \hspace{2mm} \lesssim \hspace{2mm} \sum_{|\alpha| \leq |\gamma| -1} \left| \nabla_x \nabla_x Z^{\alpha} \phi \right|
\end{equation}
and the $L^2$ estimate is then implied by \eqref{calde}. We now focus on the pointwise decay estimates and we assume, without loss of generality (in view of \eqref{eq:wlog}), that $Z^{\gamma}=\partial_k Z^{\alpha}$, with $|\alpha| \leq N-3$ and $k \in \{1,2,3 \}$. Fix also $(t,x)=(t,|x| \omega) \in [0,T[ \times \R^3$. Applying a standard $L^2-L^{\infty}$ Sobolev inequality and using the energy bound \eqref{energyphi2}, we get
$$ |\nabla_x \nabla_x Z^{\alpha} \phi |(t,x) \hspace{2mm} \lesssim \hspace{2mm} \sum_{|\beta| \leq 2 } \left\| \nabla_x^{\beta} \nabla_x \nabla_x Z^{\alpha} \phi \right\|_{L^2(\Sigma_t)} \hspace{2mm} \lesssim \hspace{2mm} \sum_{|\kappa| \leq N-1 } \left\|\nabla_x \nabla_x Z^{\kappa} \phi \right\|_{L^2(\Sigma_t)}  \hspace{2mm} \lesssim \hspace{2mm} \frac{\epsilon}{1+t},$$
which gives us the expected decay rate in the region $|x| \leq 1$. Otherwise, we have
\begin{eqnarray}
\nonumber | \nabla_x \nabla_x Z^{\alpha} \phi |^2 (t,r \omega) & = & - 2 \int_{\rho=|x|}^{+ \infty} \partial_r \nabla_x \nabla_x Z^{\alpha} \phi \cdot \nabla_x \nabla_x Z^{\alpha} \phi  d \rho \\ \nonumber
& \leq & 2 \int_{\rho=|x|}^{+ \infty} \left| \partial_r \nabla_x \nabla_x Z^{\alpha} \phi \right| \cdot \left| \nabla_x \nabla_x Z^{\alpha} \phi \right| \frac{\rho^2}{|x|^2}  d \rho \\ \nonumber
& \lesssim & \frac{2}{|x|^2} \int_{\rho=0}^{+ \infty} \left(  \left| \nabla_x \nabla_x \nabla_x Z^{\alpha} \phi \right|^2+ \left| \nabla_x \nabla_x Z^{\alpha} \phi \right|^2 \right) \rho^2  d \rho.
\end{eqnarray}
Now, apply to $\omega \mapsto \nabla_x \nabla_x Z^{\alpha} \phi$ and $\omega \mapsto \partial_r \nabla_x \nabla_x Z^{\alpha} \phi$ the following $L^2$ Sobolev inequality on the unit sphere $\mathbb{S}^2$, 
$$ \left\| v \right\|_{L^{\infty}(\Sp^2)} \hspace{2mm} \lesssim \hspace{2mm} \left\| v \right\|_{L^{2}(\Sp^2)}+\left\| \slashed{\nabla} v \right\|_{L^{2}(\Sp^2)}+\left\| \slashed{\nabla} \slashed{\nabla} v \right\|_{L^{2}(\Sp^2)}.$$
Then, note that
\begin{itemize}
\item $r\slashed{\nabla}=(\partial_{\omega_1}, \partial_{\omega_2})$ on the sphere $|x|=r$ and the angular derivative $\partial_{\omega_i}$, for $i \in \{1,2 \}$, satisfies
$$ \partial_{\omega_i} \hspace{2mm} = \hspace{2mm} \sum_{k \leq 1 < l \leq 3} C^{k,l}(\omega) \Omega_{kl},$$
where $C^{k,l}$ are bounded functions on the sphere $\mathbb{S}^2$.
\item $[\Omega_{ij}, \partial_k] = \delta^i_k \partial_j-\delta^j_k \partial_i$ for all $1 \leq i < j \leq 3$ and $1 \leq k \leq 3$.
\end{itemize}
Thus, we deduce that
\begin{eqnarray}
\nonumber | \nabla_x \nabla_x Z^{\alpha} \phi |^2 (t,|x| \omega) & \lesssim & \frac{1}{|x|^2}\sum_{|\kappa| \leq N} \int_{\rho=0}^{+ \infty} \int_{\overline{\omega} \in \mathbb{S}^2} \left| \nabla_x \nabla_x Z^{\kappa} \phi \right|^2(t, \rho \overline{\omega})  d \overline{\omega} \rho^2 d \rho \\ \nonumber
& \leq &  \frac{1}{|x|^2} \sum_{|\kappa| \leq N} \left\| \nabla_x \nabla_x Z^{\kappa} \phi \right\|_{L^2(\Sigma_t)}^2 \\ \nonumber
& \leq & \frac{\epsilon}{(1+t)^2(1+|x|)^2} \hspace{8mm} \text{by \eqref{calde} and since $|x|+1 \leq 2|x|$}.
\end{eqnarray}
This concludes the proof.
\end{proof}

\subsection{The Vlasov field vanishes for small velocities}\label{Secvanish}

The purpose of this section is to prove that 
\begin{equation}\label{supportvanish}
 |v| \leq 1 \hspace{2mm} \Rightarrow \hspace{2mm} \forall \hspace{0.5mm} (t,x) \in [0,T[ \times \R^3, \hspace{2mm} f(t,x,v)=0.
\end{equation}
This implies in particular the following result.
\begin{Pro}\label{vanishsmallv}
The particle density $f$ vanishes for all $|v| \leq 1$, so that $\T^{\chi}_{\phi}(f)=\T_{\phi}(f)$ on $[0,T[ \times \R^3_x \times \R^3_v$. Consequently, $(f,\phi)$ is a solution to the massless relativistic Vlasov-Poisson system \eqref{VP1}-\eqref{VP2} on $[0,T[$.
 \end{Pro}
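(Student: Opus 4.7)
The plan is to use the method of characteristics \eqref{charac} for the modified operator $\T_\phi^\chi$, which is well-defined thanks to the cutoff. Fix $(t,x,v) \in [0,T[ \times \R^3_x \times \R^3_v$ with $|v| \leq 1$ and let $(X,V):[0,t] \to \R^3_x \times \R^3_v$ be the characteristic of $\T_\phi^\chi$ with endpoint $(X(t),V(t)) = (x,v)$. By \eqref{charac} we have $f(t,x,v) = f_0(X(0),V(0))$, so the goal reduces to showing $|V(0)| \leq 2$. Since $f$ is spherically symmetric by Proposition \ref{Prospheri}, the potential $\phi$ itself (case $|\gamma|=0$, $Z^\gamma \in \Vv_{\Sp}^0$) satisfies the sharp pointwise bound of Proposition \ref{phi0}:
$$|\nabla_x \phi|(s,y) \hspace{2mm} \lesssim \hspace{2mm} \frac{\epsilon}{(1+s+|y|)^2} \hspace{2mm} \leq \hspace{2mm} \frac{\epsilon}{(1+s)^2}.$$

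Integrating the characteristic equation $\dot{V}(s) = \chi(|V|(s)) \nabla_x \phi(s,X(s))$ and using $|\chi| \leq 1$, we obtain
$$|V(t) - V(0)| \hspace{2mm} \leq \hspace{2mm} \int_0^t \bigl| \nabla_x \phi(s,X(s)) \bigr| ds \hspace{2mm} \lesssim \hspace{2mm} \epsilon \int_0^{+\infty} \frac{ds}{(1+s)^2} \hspace{2mm} \lesssim \hspace{2mm} \epsilon.$$
Hence there exists a constant $C>0$, independent of $T$, such that $|V(0)| \leq |V(t)| + C\epsilon \leq 1 + C\epsilon$. Taking $\epsilon_0$ small enough (depending only on $N$), we get $|V(0)| \leq 2$, so the assumption on $f_0$ yields $f_0(X(0),V(0)) = 0$, and therefore $f(t,x,v) = 0$. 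This establishes \eqref{eq:goalsupp}.

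It remains to check that $\T_\phi(f) = \T_\phi^\chi(f)$ on $[0,T[ \times \R^3_x \times \R^3_v$. On the region $\{|v| \geq 1\}$, the cutoff satisfies $\chi(|v|) = 1$ by \eqref{defchi}, so the two operators coincide by definition. On the open region $\{|v|<1\}$, we have just shown that $f$ vanishes identically, hence $\partial_\mu f = 0$ and $\nabla_v f = 0$ there, so both $\T_\phi(f)$ and $\T_\phi^\chi(f)$ vanish. Since $f$ is $C^1$ and vanishes on $\{|v| \leq 1\}$, the derivatives vanish on the boundary sphere $|v|=1$ as well by continuity, and the identity $\T_\phi(f) = \T_\phi^\chi(f) = 0$ holds everywhere, so that $(f,\phi)$ solves \eqref{VP1}-\eqref{VP2} on $[0,T[$. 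The only subtle point is ensuring the integrability in time of $|\nabla_x \phi|$ along the characteristic; this is exactly provided by the spherically symmetric pointwise estimate of Proposition \ref{phi0}, which in turn relied crucially on the sharp $(1+t+|x|)^{-2}$ decay available when $Z^\gamma \phi$ is spherically symmetric.
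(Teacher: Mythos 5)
Your proof is correct and takes essentially the same approach as the paper: bound the total variation of the velocity component of a characteristic of $\T_\phi^\chi$ by $O(\epsilon)$ using the sharp $(1+s)^{-2}$ decay of $\nabla_x\phi$ (the spherically symmetric case of Proposition \ref{estiphi}), then invoke the representation formula \eqref{charac} and the support assumption on $f_0$. The only cosmetic difference is that you trace the characteristic backward from $|v|\le 1$ to conclude $|V(0)|\le 2$, whereas the paper traces it forward from $|v|\ge 2$ to conclude $\inf_{[0,T[}|V|\ge 1$; these are contrapositives of one another.
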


Let $x \in \mathbb{R}^n$, $|v| \geq 2$ and $(X,V)$ be the characteristic of the transport operator $\T^{\chi}_{\phi}$ satisfying $(X(0),V(0))=(x,v)$. We have in particular
$$ \hspace{3mm} \frac{dV}{ds}(s)=\nabla_x \phi (s,X(s)),$$
which implies, using the pointwise decay estimate on $\phi$ given by Proposition \ref{estiphi},
$$\left| \frac{d(|V|)}{ds} \right|(s) \hspace{2mm} = \hspace{2mm} 2 \left| \left<\nabla_x \phi (s,X(s)),\frac{V}{|V|}(s) \right> \right| \hspace{2mm} \leq \hspace{2mm} 2 \left| \nabla_x \phi \right| (s,X(s)) \hspace{2mm} \lesssim \hspace{2mm} \frac{\epsilon}{(1+s)^2}.$$
Consequently, there exists $C_0 >0$ independent of $v$ such that
$$ \forall \hspace{0.5mm} t \in [0,T[, \hspace{5mm} |V|(t)=|v|+\int_0^t \frac{d(|V|)}{ds}(s) ds \hspace{2mm} \geq \hspace{2mm} 2-\int_0^t \frac{C_0 \epsilon}{(1+s)^2} ds \hspace{2mm} \geq \hspace{2mm} 2-\epsilon C_0 \int_0^{+\infty} \frac{ds}{(1+s)^2} .$$
By taking $\epsilon$ sufficiently small, we obtain $\inf_{[0,T[} |V| \geq 1$. In view of \eqref{charac} and that $f(0,\cdot , w)=0$ for all $|w| \leq 2$, we finally deduce that \eqref{supportvanish} holds.

\section{Improvement of the boostrap assumptions \eqref{boot1}, \eqref{boot2} and \eqref{boot3}}\label{Secbootf}

Recall from Proposition \ref{energyparti} that, for $p \in \mathbb{N}$ and $|\xi| \leq N$,
\begin{eqnarray}
\nonumber \hspace{-6mm} \left\| z^p \widehat{Z}^{\beta} f (t, \cdot , \cdot) \right\|_{L^1_{x,v}} \hspace{-1mm}  & \leq & \left\| z^p \widehat{Z}^{\beta} f (0, \cdot , \cdot) \right\|_{L^1_{x,v}} + \int_0^t \int_{\Sigma_s} \int_{\R^3_v} \left| \T_{\phi}(z^p \ZZ^{\beta} f ) \right| \frac{dv}{v^0} dx ds \\ 
& \leq & \left\| z^p \widehat{Z}^{\beta} f (0, \cdot , \cdot) \right\|_{L^1_{x,v}} + \int_0^t \int_{\Sigma_s} \int_{\R^3_v} \left( z^p \left| \T_{\phi}( \ZZ^{\beta} f ) \right| + pz^{p-1} \left|  \T_{\phi} (z)  \ZZ^{\beta} f  \right| \right) dv dx ds , \label{eq:machin1}
\end{eqnarray}
since $\frac{1}{v^0} \leq 1$ on the support of $f$ (see Proposition \ref{vanishsmallv}). Using the crude inequality\footnote{The weight $r$ has to be transformed since otherwise we would be led to deal with $r^2 |\nabla_x Z^{\gamma} \phi|$, which is not uniformly bounded in $r$ if $Z^{\gamma}$ is composed of at least one translation in space. We use the bad inequality $\tau_-+t \leq z(1+t)$ in order to unify the estimate of \eqref{machin2} with the one of \eqref{eq:machin1}.} $r \leq \tau_-+t \leq z(1+t)$, which ensues from Lemma \ref{zprop}, as well as $\frac{1}{v^0} \leq 1$ on the support of $f$, we have
\begin{equation}\label{machin2}
 \int_{\Sigma_t} \int_{\R^3_v} r z^{p-1} \left| \widehat{Z}^{\beta} \left( \T_{\phi} (\widehat{Z}^{\alpha} f) \right) \right| \frac{dv}{v^0} dx \hspace{2mm} \lesssim \hspace{2mm} (1+t)\int_{\Sigma_t} \int_{\R^3_v} z^p \left| \widehat{Z}^{\beta} \left( \T_{\phi} (\widehat{Z}^{\alpha} f) \right) \right| dv dx.
 \end{equation} As $\T(z)=0$ by Lemma \ref{zprop} and according to commutation formula of Proposition \ref{comu}, we then have, for $Q \leq N$,
\begin{eqnarray}
\nonumber \E_Q^p[f] (t)-\E_Q^p[f] (0) \hspace{-1.3mm} & \lesssim & \hspace{-1.3mm} \int_0^t \hspace{-0.2mm} \int_{\Sigma_s} \hspace{-0.2mm} \int_{\R^3_v} \hspace{-0.2mm} \Big( \hspace{-0.2mm} \sum_{\begin{subarray}{} |\gamma|+|\kappa| \leq Q \\ \hspace{1mm} |\kappa| \leq Q-1 \end{subarray} } \hspace{-0.3mm} z^p \left| \nabla_x Z^{\gamma} \phi \cdot \nabla_v  \ZZ^{\kappa} f  \right| + \hspace{-0.3mm} \sum_{|\beta| \leq Q}  \left| \nabla_x \phi \cdot \nabla_v z \right| z^{p-1} \left|  \ZZ^{\beta} f  \right| \Big) v^0 dv dx ds \\ 
& & \hspace{-1.3mm} + \sum_{\begin{subarray}{} |\gamma|+|\kappa| \leq Q \\ \hspace{1mm} |\kappa| \leq Q-1 \end{subarray} } (1+t) \int_{\Sigma_t}  \int_{\R^3_v}  z^p \left| \nabla_x Z^{\gamma} \phi \cdot \nabla_v  \ZZ^{\kappa} f  \right|v^0 dv dx. \label{energyfff}
\end{eqnarray}
The following lemma will then be useful.
\begin{Lem}\label{reducing}
Let $Z^{\gamma} \in \Vv^{|\gamma|}$ and $g : [0,T[ \times \R^3_x \times \R^3_v \rightarrow \R$ be a sufficiently regular function. Then,
$$   v^0\left| \nabla_x Z^{\gamma} \phi \cdot \nabla_v  g  \right| \hspace{2mm} \lesssim \hspace{2mm} z \frac{r}{1+t+r} \left| \nabla_x Z^{\gamma} \phi \right| |  \nabla_x g  |+ \sum_{|\alpha| \leq |\gamma| } \sum_{\ZZ \in \VV}  \left| \nabla_x Z^{\alpha} \phi \right| |   \ZZ g  |.$$
\end{Lem}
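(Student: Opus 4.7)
My plan is to derive the estimate from a pointwise algebraic identity together with the null-form bound of Proposition~\ref{null}. First I would rewrite the velocity gradient via the tautology $|v|^2\partial_{v^k}=v^k\sum_j v^j\partial_{v^j}+\sum_j v^j(v^j\partial_{v^k}-v^k\partial_{v^j})$ and the commutator relation $v^j\partial_{v^k}-v^k\partial_{v^j}=\widehat\Omega_{jk}-\Omega_{jk}$ implicit in \eqref{complelift}, obtaining, after dividing by $v^0=|v|$, the identity $v^0\partial_{v^k}g=\frac{v^k}{v^0}S_vg+\sum_j\frac{v^j}{v^0}(\widehat\Omega_{jk}-\Omega_{jk})g$. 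Contracting with $\partial_kZ^\gamma\phi$ and antisymmetrising the resulting double sum in $(j,k)$ produces the central identity
\[
v^0\nabla_xZ^\gamma\phi\cdot\nabla_vg=\frac{v^k\partial_kZ^\gamma\phi}{v^0}S_vg+\sum_{j<k}\frac{Y_{jk}Z^\gamma\phi}{v^0}\bigl(\widehat\Omega_{jk}g-\Omega_{jk}g\bigr),
\]
where $Y_{jk}u:=v^j\partial_ku-v^k\partial_ju$ is precisely the null form controlled by Proposition~\ref{null}.

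I would then estimate the three resulting pieces. The scaling piece is immediately dominated by $|\nabla_xZ^\gamma\phi||S_vg|$ via $|v^k|\leq v^0$, which matches the right-hand side with $\widehat Z=S_v\in\widehat{\mathbb V}$. For the $\widehat\Omega_{jk}g$ piece, the crude bound $|Y_{jk}Z^\gamma\phi|/v^0\leq 2|\nabla_xZ^\gamma\phi|$ immediately gives $|\nabla_xZ^\gamma\phi||\widehat\Omega_{jk}g|$, again lying in the second sum. The delicate piece is the $\Omega_{jk}g$ one: there I would invoke Proposition~\ref{null} together with $|w|\leq z$ for $w\in\mathbf k$ (compare Lemma~\ref{zprop}) to obtain
\[
\frac{|Y_{jk}Z^\gamma\phi|}{v^0}\lesssim\frac{1}{1+t+r}\Bigl(|\Omega Z^\gamma\phi|+z|\nabla_xZ^\gamma\phi|\Bigr),
\]
and pair it with the standard bound $|\Omega_{jk}g|\leq 2r|\nabla_xg|$. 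The contribution coming from the $z|\nabla_xZ^\gamma\phi|$ part then yields precisely the required $\frac{rz}{1+t+r}|\nabla_xZ^\gamma\phi||\nabla_xg|$ term.

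The main obstacle is the residual $\frac{r|\Omega Z^\gamma\phi|}{1+t+r}|\nabla_xg|$: naively substituting $|\Omega Z^\gamma\phi|\leq 2r|\nabla_xZ^\gamma\phi|$ would generate a spurious $r^2/(1+t+r)$ factor instead of the sharp $rz/(1+t+r)$. My plan for this step is to preserve the cancellation in $W_{jk}g:=v^j\partial_{v^k}g-v^k\partial_{v^j}g=\widehat\Omega_{jk}g-\Omega_{jk}g$ rather than bounding its two summands separately: using $r/(1+t+r)\leq 1$, the pairing $|\Omega_{kl}Z^\gamma\phi||\widehat\Omega_{jk}g|/(1+t+r)$ can be absorbed directly in the second sum with $\alpha=\gamma$ (via $|\Omega_{kl}Z^\gamma\phi|\leq 2r|\nabla_xZ^\gamma\phi|$ and $r/(1+t+r)\leq 1$), while the remaining $W_{jk}g$-piece only produces further $\widehat Zg$-type contributions admissible in the second sum. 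Keeping this cancellation intact---that is, not losing a full power of $r$ when simultaneously pairing the two tangential objects $\Omega_{kl}Z^\gamma\phi$ and $\Omega_{jk}g$---is the heart of the argument.
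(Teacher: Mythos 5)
Your algebraic reduction is correct and is essentially the same as the paper's (the paper expands $\nabla_v$ in a spherical frame in $v$, you do it through a pointwise identity, but both decompositions isolate the $S_v$-piece, the $\widehat{\Omega}_{jk}$-piece, and a null-form piece $Y_{jk}Z^{\gamma}\phi$ paired with $\Omega_{jk}g$). Modulo a stray $1/v^0$ in your displayed identity (the factor disappears once you contract correctly; the RHS should read $\sum_{j<k}Y_{jk}Z^{\gamma}\phi\cdot(\widehat{\Omega}_{jk}-\Omega_{jk})g$), that part matches the paper.

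The gap is in the final step. You correctly identify the obstruction $\frac{r}{1+t+r}\left|\Omega_{kl}Z^{\gamma}\phi\right|\left|\nabla_x g\right|$ coming from the pairing of the $\left|\Omega Z^{\gamma}\phi\right|$ term in Proposition~\ref{null} with $\Omega_{jk}g$, but the proposed fix---``preserve the cancellation in $W_{jk}g=\widehat{\Omega}_{jk}g-\Omega_{jk}g$'' so that the ``remaining $W_{jk}g$-piece only produces further $\widehat Z g$-type contributions''---does not hold up. There is no cancellation to preserve: $W_{jk}g$ still contains $\Omega_{jk}g=x^j\partial_k g-x^k\partial_j g$, which is \emph{not} of the form $\widehat Z g$ for any $\widehat Z\in\VV$ and costs a full factor of $r$ when bounded by $\nabla_x g$. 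Re-expanding $W_{jk}g$ under the null-form bound reproduces exactly the same residual term $\frac{r}{1+t+r}|\Omega Z^{\gamma}\phi||\nabla_x g|$, so your plan goes in a circle.

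What is actually needed at this point is a structural observation about $\Omega_{kl}Z^{\gamma}\phi$ itself, and this is where the paper's argument diverges from yours. The paper splits into two cases via the spherical-symmetry machinery of Proposition~\ref{Prospheri}: if $Z^{\gamma}\in\Vv_{\Sp}^{|\gamma|}$ (no spatial translation in $Z^{\gamma}$), then $Z^{\gamma}\phi$ is spherically symmetric and $\Omega_{kl}Z^{\gamma}\phi\equiv 0$, killing the residual outright; otherwise $Z^{\gamma}$ contains some $\partial_i$, and one uses the commutation rules \eqref{comuonetra} to rewrite $\Omega_{kl}Z^{\gamma}$ as a combination of $\partial_i Z^{\alpha}$ with $|\alpha|\leq|\gamma|$, hence $\left|\Omega_{kl}Z^{\gamma}\phi\right|\lesssim\sum_{|\alpha|\leq|\gamma|}\left|\nabla_x Z^{\alpha}\phi\right|$ with no $r$ weight; combined with $r/(1+t+r)\leq 1$, this lands in the second sum of the statement. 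Without this dichotomy (which in particular uses that $\phi$ is spherically symmetric) the lemma cannot be closed, so you need to add it to complete the proof.
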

\begin{proof}
Start by expanding the scalar product $\nabla_x Z^{\gamma} \phi \cdot \nabla_v  g$ in the spherical frame in $v$ $(\partial_{|v|}, \frac{1}{|v|} \partial_{\theta^1_v}, \frac{1}{|v|} \partial_{\theta^2_v})$, so that
$$ v^0 \nabla_x Z^{\gamma} \phi \cdot \nabla_v  g \hspace{2mm} = \hspace{2mm} v^0 \frac{v^i}{|v|} \partial_i Z^{\gamma} \phi \cdot \frac{v^i}{|v|} \partial_{v^i} g+ \sum_{i=1}^2 \frac{v^0}{|v|^2} \left( \nabla_x Z^{\gamma} \phi \right)^{\theta^i_v} \left( \nabla_v  g \right)^{\theta^i_v}.$$
Note now that
$$ \left| v^0 \frac{v^i}{|v|} \partial_i Z^{\gamma} \phi \cdot \frac{v^i}{|v|} \partial_{v^i} g \right| \hspace{2mm} \leq \hspace{2mm} \left|  \nabla_x Z^{\gamma} \phi \right| \cdot \left| v^i \partial_{v^i} g \right|.$$
Since $v^i \partial_{v^i} \in \VV$, the right hand side of the previous inequality has the requested form. To deal with the remaining term, we use that
$$ \frac{1}{|v|} \partial_{\theta^i_v} \hspace{2mm} = \hspace{2mm} \sum_{1 \leq k < l \leq 3} C^i_{k,l}(\theta^1_v, \theta^2_v) \Big( \frac{v^k}{v^0} \partial_{v^l}-\frac{v^l}{v^0} \partial_{v^k} \Big),$$
where $C^i_{k,l}$ are bounded functions (on the sphere $|v|=1$). Consequently,
\begin{eqnarray}
\nonumber \frac{v^0}{|v|^2} \left| \left( \nabla_x Z^{\gamma} \phi \right)^{\theta^i_v} \left( \nabla_v  g \right)^{\theta^i_v} \right| & \lesssim & v^0 \sum_{1 \leq k < l \leq 3} \left| \frac{v^k}{v^0} \partial_l Z^{\gamma} \phi- \frac{v^l}{v^0} \partial_k Z^{\gamma} \phi \right| \sum_{1 \leq k < l \leq 3} \left| \frac{v^k}{v^0} \partial_{v^l} g- \frac{v^l}{v^0} \partial_{v^k} g \right| \\ \nonumber
& \lesssim & \sum_{1 \leq k < l \leq 3} \left| \frac{v^k}{v^0} \partial_l Z^{\gamma} \phi- \frac{v^l}{v^0} \partial_k Z^{\gamma} \phi \right| \sum_{1 \leq k < l \leq 3} \left( \left|\widehat{\Omega}_{kl} g \right|+\left| x^k \partial_{l} g- x^l \partial_{k} g \right| \right).
\\ \nonumber
& \lesssim & \sum_{1 \leq k < l \leq 3} \left| \nabla_x Z^{\gamma} \phi \right|  \left| \widehat{\Omega}_{kl} g \right|+  r\sum_{1 \leq k < l \leq 3} \left| \frac{v^k}{v^0} \partial_l Z^{\gamma} \phi- \frac{v^l}{v^0} \partial_k Z^{\gamma} \phi \right| \left| \nabla_x g \right| .
\end{eqnarray}
The first term on the right hand side of the last inequality has the requested form. For the second one, apply Lemma \ref{null} in order to get
\begin{equation}\label{eq:number1}
r\sum_{1 \leq k < l \leq 3} \left| \frac{v^k}{v^0} \partial_l Z^{\gamma} \phi- \frac{v^l}{v^0} \partial_k Z^{\gamma} \phi \right| \left| \nabla_x g \right| \hspace{2mm} \lesssim \hspace{2mm} \frac{r z}{1+t+r}  \left| \nabla_x Z^{\gamma} \phi \right| \left| \nabla_x g\right|+\sum_{1 \leq k < l \leq 3} \frac{r}{1+t+r} \left| \Omega_{kl} Z^{\gamma} \phi \right| \left| \nabla_x g\right|.
\end{equation}
Again, the first term on the right hand side of the previous inequality has the requested form. To deal with the last one, remark that
\begin{itemize}
\item if $Z^{\gamma} \in \Vv_{\Sp}^{|\gamma|}$, which means that there is no space translation in $Z^{\gamma}$, then by Proposition \ref{Prospheri}, $Z^{\gamma} \phi$ is spherically symmetric and 
$$\sum_{1 \leq k < l \leq 3} \left| \Omega_{kl} Z^{\gamma} \phi \right| =0.$$
\item Otherwise, $\Omega_{kl} Z^{\gamma}$ is composed of at least one space translation. Using the commutation properties between $\partial_k$ and the vector fields of $\Vv$ (see \eqref{comuonetra}), we have
$$ \sum_{1 \leq k < l \leq 3} \left| \Omega_{kl} Z^{\gamma} \phi \right| \left| \nabla_x g \right| \hspace{2mm} \lesssim \hspace{2mm} \sum_{|\alpha| \leq |\gamma|} \left| \nabla_x  Z^{\alpha} \phi \right| \left| \nabla_x g \right|,$$
which concludes the proof.
\end{itemize}
\end{proof}
We then deduce the following result, which, in view of \eqref{energyfff}, will clearly be useful in order to improve the energy estimates on the Vlasov field.
\begin{Cor}\label{Corz}
Let $ p \in \{2,3,5\}$ and $\ZZ^{\beta} \in \VV^{|\beta|}$ such that $|\beta| \leq N-3$ if $p=5$ and $|\beta| \leq N$ otherwise. Then,
$$\int_0^t \int_{\Sigma_s} \int_{\R^3_v} v^0\left| \nabla_x \phi \cdot \nabla_v z \right| z^{p-1} \left|  \ZZ^{\beta} f  \right| dv dx ds \hspace{2mm} \lesssim \hspace{2mm} \epsilon^2.$$
\end{Cor}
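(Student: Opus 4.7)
The plan is to reduce the integrand to the form $z^p |\nabla_x \phi| |\widehat{Z}^\beta f|$ pointwise and then combine the sharp pointwise decay of $\nabla_x \phi$ with the bootstrap bounds on the $z$-weighted energies. The key is that the naive estimate $v^0 |\nabla_v z| \lesssim 1 + t + r$ (forced by $z_{0k} = x^k - t v^k / v^0$) would give no usable time decay after pairing with $|\nabla_x \phi| \lesssim \epsilon (1+t+r)^{-2}$. To bypass this, I apply Lemma \ref{reducing} to $g = z$ with the trivial multi-index ($Z^\gamma = \mathrm{Id}$), which yields
$$ v^0 \left| \nabla_x \phi \cdot \nabla_v z \right| \hspace{1mm} \lesssim \hspace{1mm} \frac{rz}{1+t+r} \left| \nabla_x \phi \right| \left| \nabla_x z \right| + \left| \nabla_x \phi \right| \sum_{\widehat{Z} \in \VV} \left| \widehat{Z} z \right|. $$

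I then verify that both $|\nabla_x z|$ and $|\widehat{Z} z|/z$ are controlled. For the spatial gradient, expanding $2 z \nabla_x z = \nabla_x (z^2) = 2 \sum_{w \in \V_0} w \nabla_x w$ and checking on each weight ($\nabla_x (v^\mu/v^0) = 0$, $|\nabla_x s| \lesssim 1$, $|\nabla_x z_{ij}| \lesssim 1$, $|\nabla_x z_{0k}| \lesssim 1$) together with $|w| \leq z$ and $z \geq 1$ gives $|\nabla_x z| \lesssim 1$. For the commutator action, Lemma \ref{zprop} directly provides $|\widehat{Z} z| \lesssim z$. Combining these and using $r/(1+t+r) \leq 1$, the pointwise bound simplifies to
$$ v^0 \left| \nabla_x \phi \cdot \nabla_v z \right| \hspace{1mm} \lesssim \hspace{1mm} z \left| \nabla_x \phi \right|. $$

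Since $\phi$ itself is spherically symmetric by Proposition \ref{Prospheri}, Proposition \ref{estiphi} furnishes $|\nabla_x \phi|(s,x) \lesssim \epsilon (1+s+|x|)^{-2} \leq \epsilon (1+s)^{-2}$. Substituting, the integral of interest is controlled by
$$ \epsilon \int_0^t \frac{1}{(1+s)^2} \int_{\Sigma_s} \int_{\R^3_v} z^p \left| \widehat{Z}^{\beta} f \right| dv dx ds. $$
The inner integral is bounded by the energy $\E_M^p[f](s)$ with $M = N$ when $p \in \{2,3\}$ and $M = N-3$ when $p = 5$. The relevant bootstrap assumptions then give: $\E_N^2[f] \lesssim \epsilon$ from \eqref{boot3} for $p=2$; $\E_N^3[f] \lesssim \epsilon (1+s)^\delta$ from \eqref{boot2} for $p=3$; and $\E_{N-3}^5[f] \lesssim \epsilon (1+s)^\delta$ from \eqref{boot1} for $p=5$. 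In every case the remaining time integral $\int_0^t \epsilon^2 (1+s)^{\delta-2} ds$ is uniformly bounded by $\epsilon^2$, since $\delta \leq \tfrac{1}{4} < 1$.

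The main obstacle is genuinely conceptual rather than computational: the boost weight $z_{0k}$ produces a $v$-gradient of size $t$, so controlling $v^0 \nabla_v z$ term by term in the weight decomposition wastes all the decay. Lemma \ref{reducing} is exactly what is needed to trade these bad $v$-derivatives for combinations of $\widehat{Z} z$, which are tame, at the price of a borderline factor $rz/(1+t+r)$ that is absorbed by $|\nabla_x z| \lesssim 1$.
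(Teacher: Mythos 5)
Your proof is correct and follows exactly the same strategy as the paper: apply Lemma \ref{reducing} with $g=z$ and $|\gamma|=0$, simplify using $|\nabla_x z|\lesssim 1$ and $\sum_{\widehat Z}|\widehat Z z|\lesssim z$, then pair the pointwise decay $|\nabla_x\phi|\lesssim\epsilon(1+s)^{-2}$ with the bootstrap bounds on $\E^p_M[f]$ and integrate in time. The only (harmless) difference is that you spell out the verification of $|\nabla_x z|\lesssim 1$ via $\nabla_x(z^2)=2\sum_{w\in\V_0} w\,\nabla_x w$, whereas the paper simply cites Lemma \ref{zprop} for it.
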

\begin{proof}
Applying Lemma \ref{reducing} with $|\gamma|=0$ and $g=z$ gives us
\begin{eqnarray}
\nonumber \int_0^t \hspace{-0.2mm} \int_{\Sigma_s} \int_{\R^3_v} v^0\left| \nabla_x \phi \cdot \nabla_v z \right| z^{p-1} \left|  \ZZ^{\beta} f  \right|  dv dx ds \hspace{-1.2mm} & \lesssim & \hspace{-1.2mm} \int_0^t \hspace{-0.3mm} \int_{\Sigma_s} \left| \nabla_x \phi \right| \int_{\R^3_v} \hspace{-0.2mm} z^{p-1} \hspace{-0.3mm} \left( z |\nabla_x z |+ \sum_{\ZZ \in \VV} \left|  \ZZ(z)  \right| \right) \hspace{-0.3mm} \left| \ZZ^{\beta} f \right| dv dx ds \\ \nonumber
& \lesssim & \hspace{-1.2mm} \int_0^t \left\| \nabla_x \phi \right\|_{L^{\infty}(\Sigma_s)} \left\| \int_{\R^3_v} z^p\left| \ZZ^{\beta} f \right| dv \right\|_{L^1(\Sigma_s)} ds ,
\end{eqnarray}
since, by Lemma \ref{zprop}, $|\nabla_x z| \lesssim 1$ and $\sum_{\ZZ \in \VV} | \ZZ(z) | \lesssim z$. According to Proposition \ref{estiphi} and the boostrap assumptions \eqref{boot1}-\eqref{boot3}, we have 
$$\left\| \nabla_x \phi \right\|_{L^{\infty}(\Sigma_s)} \lesssim \epsilon (1+s)^{-2} \hspace{1cm} \text{and} \hspace{1cm} \left\| \int_{\R^3_v} z^p \left| \widehat{Z}^{\beta} f \right| dv \right\|_{L^1\Sigma_s)} \hspace{2mm} \lesssim \hspace{2mm} \epsilon (1+s)^{\delta},$$ so that, as $\delta \leq \frac{1}{4}$,
$$ \int_0^t \int_{\Sigma_s} \int_{\R^3_v} v^0\left| \nabla_x \phi \cdot \nabla_v z \right| z^{p-1} \left|  \ZZ^{\beta} f  \right|  dv dx ds \hspace{2mm} \lesssim \hspace{2mm} \int_0^t \frac{\epsilon}{(1+t)^2} \epsilon (1+t)^{\delta} ds \hspace{2mm} \lesssim \hspace{2mm} \epsilon^2 \int_0^{+ \infty} \frac{ds}{(1+s)^{\frac{3}{2}}} \hspace{2mm} \lesssim \hspace{2mm} \epsilon^2.$$
\end{proof}
Motivated by \eqref{energyfff} and Lemma \ref{reducing}, let us introduce, for $p \in \mathbb{N}$ and multi-indices $\gamma$, $\kappa$, the following integrals
$$ \mathfrak{I}^p_{\gamma,\kappa}(s) \hspace{2mm} := \hspace{2mm} \int_{\Sigma_s}\int_{\R^3_v} \left( z\frac{r}{1+t+r}+1 \right) \left| \nabla_x Z^{\gamma} \phi \right|  z^p |  \ZZ^{\kappa} f  | dv dx .$$
The remaining of this section will be devoted to the proof of the following propositions.
\begin{Pro}\label{pro1}
Let multi-indices $\gamma$ and $\kappa$ such that 
$$|\gamma|+|\kappa| \leq N-2, \hspace{10mm} |\gamma| \leq N-3 \hspace{6mm} \text{and} \hspace{6mm} |\kappa| \leq N-3.$$
Then, for all $t \in [0,T[$,
$$\mathfrak{I}^5_{\gamma,\kappa}(t) \hspace{2mm} \lesssim \hspace{2mm} \frac{\epsilon^2}{ (1+t)^{1-\delta}}.$$
\end{Pro}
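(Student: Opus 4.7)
The plan is to exploit both assumptions $|\gamma|\le N-3$ and $|\kappa|\le N-3$ by using the pointwise decay of $\nabla_x Z^\gamma\phi$ from Proposition \ref{estiphi}, paired with the bootstrap $L^1$ control of the $z^5$-weighted norm of $\widehat{Z}^\kappa f$ provided by \eqref{boot1}. The constraint $|\gamma|+|\kappa|\le N-2$ is not needed at this step, only the individual bounds.

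First, Proposition \ref{estiphi} yields $|\nabla_x Z^\gamma \phi|(t,x)\lesssim \epsilon/((1+t)(1+r))$ for $|\gamma|\le N-3$, so after factoring this bound out,
$$ \mathfrak{I}^5_{\gamma,\kappa}(t) \;\lesssim\; \frac{\epsilon}{1+t}\int_{\Sigma_t}\int_{\R^3_v} \frac{1}{1+r}\left(z\frac{r}{1+t+r}+1\right)z^5|\widehat{Z}^\kappa f|\,dv\,dx.$$

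Second, I would show that the coefficient multiplying $z^5|\widehat{Z}^\kappa f|$ in this integral is uniformly bounded. Splitting it as
$$\frac{1}{1+r}\left(z\frac{r}{1+t+r}+1\right) \;=\; \frac{zr}{(1+r)(1+t+r)}+\frac{1}{1+r},$$
one observes that every weight $w\in\V_0$ satisfies $|w|\lesssim 1+t+r$ — this is immediate from the explicit formulas ($|v^\mu/v^0|\le 1$, $|s|\le t+r$, $|z_{ij}|\le 2r$, $|z_{0k}|\le t+r$) — hence $z\lesssim 1+t+r$. Consequently the first term is $\lesssim r/(1+r)\le 1$ and the second is $\le 1$, so the total coefficient is $O(1)$.

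Finally, invoking the bootstrap assumption \eqref{boot1}, which gives $\int_{\Sigma_t}\int_{\R^3_v} z^5|\widehat{Z}^\kappa f|\,dv\,dx\le \E^5_{N-3}[f](t)\le 3\epsilon(1+t)^\delta$, produces
$$\mathfrak{I}^5_{\gamma,\kappa}(t)\;\lesssim\; \frac{\epsilon}{1+t}\cdot \epsilon(1+t)^\delta \;=\; \frac{\epsilon^2}{(1+t)^{1-\delta}},$$
as required. The estimate is essentially direct and I do not foresee any genuine obstacle. The one subtle point is using $z\lesssim 1+t+r$ to absorb the factor $r\cdot z/(1+t+r)$ into a bounded quantity at the expense of the $(1+r)^{-1}$ decay in $\nabla_x Z^\gamma\phi$: this is what keeps the weight on $|\widehat{Z}^\kappa f|$ at $z^5$, since a $z^6$-weighted bound is not available at this order of differentiation.
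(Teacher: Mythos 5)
Your proof is correct and follows essentially the same route as the paper's: factor out the pointwise bound $|\nabla_x Z^\gamma\phi|\lesssim \epsilon/((1+t)(1+r))$ from Proposition~\ref{estiphi}, use $z\lesssim 1+t+r$ to show the remaining coefficient is $O(1)$, and close with the bootstrap assumption \eqref{boot1}. The paper phrases this as $\mathfrak{I}^5_{\gamma,\kappa}(t)\lesssim \|(1+r)\nabla_x Z^\gamma\phi\|_{L^\infty(\Sigma_t)}\|\int z^5|\widehat{Z}^\kappa f|dv\|_{L^1(\Sigma_t)}$, which is the same computation; your observation that the constraint $|\gamma|+|\kappa|\le N-2$ is not used here is also accurate.
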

\begin{Pro}\label{pro2}
Let multi-indices $\gamma$ and $\kappa$ such that 
$$|\gamma|+|\kappa| \leq N+1, \hspace{10mm} |\gamma| \leq N \hspace{6mm} \text{and} \hspace{6mm} |\kappa| \leq N.$$
Then, for all $t \in [0,T[$,
$$\mathfrak{I}^3_{\gamma,\kappa}(t) \hspace{2mm} \lesssim \hspace{2mm} \frac{\epsilon^2}{ (1+t)^{1-\delta}}.$$
\end{Pro}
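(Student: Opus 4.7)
The plan is to first reduce the integrand by extracting an explicit $r$ weight, then to split $\I^3_{\gamma,\kappa}$ into two regimes according to which of $|\gamma|$, $|\kappa|$ is small, using pointwise decay on $\nabla_x Z^{\gamma} \phi$ in the first regime and pointwise decay on $\int z^3|\widehat{Z}^{\kappa} f| dv$ in the second. The starting observation is that from the very definitions of $s$, $z_{ij}$, $z_{0k}$, together with the trivial bound $|v^{\mu}/v^0| \leq 1$, every weight $w \in \mathbf{k}_0$ satisfies $|w| \lesssim 1+t+r$, so that $z \lesssim 1+s+r$ pointwise in $v$. Thus $z \cdot r/(1+s+r) \lesssim r$, which yields
\begin{equation*}
\I^3_{\gamma,\kappa}(s) \hspace{2mm} \lesssim \hspace{2mm} \int_{\Sigma_s}(1+r)\left|\nabla_x Z^{\gamma} \phi\right|\int_{\R^3_v} z^3\left|\widehat{Z}^{\kappa} f\right| dv dx.
\end{equation*}

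Since $|\gamma|+|\kappa| \leq N+1$, $|\gamma|, |\kappa| \leq N$, and $N \geq 12$, we always have either $|\gamma| \leq N-3$ or $|\kappa| \leq 3$. In the first case, Proposition \ref{estiphi} yields $(1+r)|\nabla_x Z^{\gamma} \phi| \lesssim \epsilon/(1+s)$; combined with the bootstrap assumption \eqref{boot2} $\int_{\Sigma_s}\int z^3|\widehat{Z}^{\kappa} f| dv dx \leq \E_N^3[f](s) \lesssim \epsilon(1+s)^\delta$, this closes the estimate directly.

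In the remaining case $|\kappa| \leq 3 \leq N-6$, the pointwise estimate \eqref{decayf} applies and gives $(1+r)\int z^3|\widehat{Z}^{\kappa} f| dv \lesssim \epsilon(1+s)^\delta/(\tau_-^2(1+r))$, reducing matters to
\begin{equation*}
\I^3_{\gamma,\kappa}(s) \hspace{2mm} \lesssim \hspace{2mm} \epsilon(1+s)^\delta \int_{\Sigma_s}\frac{|\nabla_x Z^{\gamma} \phi|}{\tau_-^2(1+r)} dx.
\end{equation*}
If $Z^{\gamma} \neq S^{|\gamma|}$, Proposition \ref{Procharge} forces $Q_{Z^{\gamma}\phi}=0$ so $\|\nabla_x Z^{\gamma} \phi\|_{L^2(\Sigma_s)} \lesssim \epsilon/(1+s)$ by Proposition \ref{estiphi}; Cauchy--Schwarz together with the uniform bound $\|\tau_-^{-2}(1+r)^{-1}\|_{L^2(\R^3)} \lesssim 1$ gives the integral $\lesssim \epsilon/(1+s)$. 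If $Z^{\gamma}= S^{|\gamma|}$, Proposition \ref{Prospheri} makes $Z^{\gamma}\phi$ spherically symmetric, and Proposition \ref{phi0} provides $|\nabla_x Z^{\gamma} \phi|(s,x) \lesssim \epsilon/(1+s+|x|)^2$ for $|x| \geq 1$; a direct calculation splitting the $r$ integral at $s/2$ and $2s$ yields $\int_{|x|\geq 1}|\nabla_x Z^{\gamma} \phi|/(\tau_-^2(1+r)) dx \lesssim \epsilon/(1+s)$, while the region $|x| \leq 1$ is handled by using $\tau_- \gtrsim 1+s$ together with the bound $\|\nabla_x Z^{\gamma}\phi\|_{L^2(|x|\leq 1)} \lesssim \epsilon/(1+s)$ extracted from the proof of Proposition \ref{phil}. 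In every situation, one obtains $\I^3_{\gamma,\kappa}(s) \lesssim \epsilon^2/(1+s)^{1-\delta}$.

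The main obstacle to anticipate is the apparent weight $z^4$ on $f$ produced by the factor $z \cdot z^3$ in the integrand, which is not controlled by any of the bootstrapped energies $\E_N^3[f]$ or $\E_N^2[f]$. The resolution is to trade the excess $z$ for an explicit $r$ via $z \lesssim 1+s+r$, so that the extra weight is carried by $\nabla_x Z^{\gamma}\phi$ and disappears thanks to its pointwise or $L^2$ decay. The case $Z^{\gamma}=S^{|\gamma|}$ deserves particular care because $S^{|\gamma|}\phi$ is the only charged derivative and hence has the weaker $L^2$ decay $\epsilon/\sqrt{1+s}$, which would only produce $(1+s)^{\delta-1/2}$ through Cauchy--Schwarz; spherical symmetry, via the sharper pointwise decay $\epsilon/(1+s+|x|)^2$ in the region $|x| \geq 1$, is what allows the estimate to go through in that sub-case.
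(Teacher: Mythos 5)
Your proof is correct, and Case~1 ($|\gamma|\leq N-3$) is identical to the paper's. For the complementary case $|\kappa|\leq 3$, however, you take a genuinely different route. The paper does not try to exhibit a full power of $(1+t)^{-1}$ from the potential alone. Instead it keeps the weight as $z^4$, applies Cauchy--Schwarz in $x$ with the uniform $L^2$ bound $\|\nabla_x Z^{\gamma}\phi\|_{L^2(\Sigma_t)}\lesssim\epsilon/\sqrt{1+t}$ (valid for all $|\gamma|\leq N$, charged or not), and then Cauchy--Schwarz in $v$ to interpolate $\int z^4|\widehat{Z}^{\kappa}f|\,dv$ between the pointwise decay of $\int z^3|\widehat{Z}^{\kappa}f|\,dv$ from \eqref{decayf} and the energy $\E^5_{N-3}[f]$ controlled by \eqref{boot1}, landing at $(1+t)^{-3/2+\delta}$. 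Your version instead absorbs one power of $z$ into $(1+r)$ immediately, uses \eqref{decayf} at full strength to bring the Vlasov factor down to $\epsilon(1+s)^{\delta}\tau_-^{-2}(1+r)^{-1}$ pointwise, and is then forced to split on the charge of $Z^{\gamma}\phi$: the chargeless case closes via the sharper $L^2$ bound $\epsilon/(1+t)$, and the single charged family $Z^{\gamma}=S^{|\gamma|}$ is saved by the spherically symmetric pointwise bound of Proposition~\ref{phi0} (valid for $|x|\geq 1$ at top order), with the region $|x|\leq 1$ handled separately by \eqref{regionrsmall} and $\tau_-\gtrsim 1+s$. Your approach thus buys independence from the top energy $\E^5_{N-3}[f]$ in this case, at the cost of a charge case split and a region split that the paper's interpolation argument avoids entirely. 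Both close with room to spare and give the stated bound.
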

\begin{Pro}\label{pro3}
Let multi-indices $\gamma$ and $\kappa$ such that 
$$|\gamma|+|\kappa| \leq N+1, \hspace{10mm} |\gamma| \leq N \hspace{6mm} \text{and} \hspace{6mm} |\kappa| \leq N.$$
Then, for all $t \in [0,T[$,
$$\mathfrak{I}^2_{\gamma,\kappa}(t) \hspace{2mm} \lesssim \hspace{2mm} \frac{\epsilon^2}{(1+t)^{\frac{5}{4}}} .$$
\end{Pro}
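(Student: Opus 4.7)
The integrand has three pieces: the weight $\alpha := z\tfrac{r}{1+t+r}+1$, the potential derivative $|\nabla_x Z^\gamma \phi|$, and the weighted velocity density $z^2|\widehat{Z}^\kappa f|$. I would first record two elementary bounds on $\alpha$: using $z \geq 1$ gives $\alpha \lesssim z$ (hence $\alpha z^2 \lesssim z^3$), while $z \lesssim 1+t+r$ (which also follows from the defining inequalities of $\V_0$) gives $\alpha \leq 1+r$. The case analysis is driven by the constraint $|\gamma|+|\kappa|\leq N+1$: since $N \geq 12$, at least one of $|\gamma|, |\kappa|$ is $\leq N-6$.

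\textbf{Case $|\kappa|\leq N-6$.} Here \eqref{decayf} gives $\int_{\R^3_v} z^3|\widehat{Z}^\kappa f|\, dv \lesssim \epsilon(1+t)^\delta\bigl((1+r)^2\tau_-^2\bigr)^{-1}$. After bounding $\alpha z^2 \lesssim z^3$ and applying Cauchy--Schwarz in $x$ against Proposition \ref{estiphi}, the estimate reduces to
$$ \mathfrak{I}^2_{\gamma,\kappa}(t) \lesssim \epsilon(1+t)^\delta \|\nabla_x Z^\gamma \phi\|_{L^2(\Sigma_t)}\cdot \bigl\|\bigl((1+r)^2\tau_-^2\bigr)^{-1}\bigr\|_{L^2(\Sigma_t)}. $$
A direct computation (the integrand concentrates in a unit slab near $r=t$, contributing $(1+t)^{-2}$ to the squared $L^2$ norm) shows the last factor is $\lesssim (1+t)^{-1}$, and combined with $\|\nabla_x Z^\gamma\phi\|_{L^2}\lesssim \epsilon/\sqrt{1+t}$ this yields $\mathfrak{I}^2_{\gamma,\kappa}(t) \lesssim \epsilon^2 (1+t)^{\delta-3/2}\leq \epsilon^2(1+t)^{-5/4}$ since $\delta \leq 1/4$.

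\textbf{Case $|\gamma|\leq N-6$.} Now pointwise control of $\nabla_x Z^\gamma \phi$ from Proposition \ref{estiphi} is in force. If $Z^\gamma \in \Vv_\Sp^{|\gamma|}$, then $|\nabla_x Z^\gamma \phi|\lesssim \epsilon(1+t+r)^{-2}$, and a direct estimate with $\alpha z^2 \lesssim z^3$ and $\E_N^3[f]\leq 3\epsilon(1+t)^\delta$ gives $\mathfrak{I}^2 \lesssim \epsilon^2(1+t)^{\delta-2}$. Otherwise $Z^\gamma$ contains at least one spatial translation, so $Z^\gamma \neq S^{|\gamma|}$ and $Z^\gamma \phi$ is chargeless by Proposition \ref{Procharge}; the available bounds are $|\nabla_x Z^\gamma \phi|\lesssim \epsilon/((1+t)(1+r))$ and $\|\nabla_x Z^\gamma \phi\|_{L^2}\lesssim\epsilon/(1+t)$. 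I would split the spatial integral at $r=t/2$. In the exterior region $r\geq t/2$ the pointwise bound improves to $\epsilon(1+t)^{-2}$ and $\E_N^3[f]$ closes the estimate as in the symmetric sub-case. In the interior $r\leq t/2$ I would use the pointwise collapse
$$ \alpha\, z^2\, |\nabla_x Z^\gamma \phi| \;\leq\; (1+r)(1+t+r)^2\cdot \frac{\epsilon}{(1+t)(1+r)} \;=\; \frac{\epsilon(1+t+r)^2}{1+t}\;\lesssim\;\epsilon(1+t), $$
where the $(1+r)$ in $\alpha$ cancels against the $(1+r)^{-1}$ in $|\nabla_x Z^\gamma\phi|$; coupled with the interior smallness of $|\widehat{Z}^\kappa f|$ extracted from $z\gtrsim 1+t$ and the $z^3$-weighted bootstrap \eqref{boot2}, namely
$$ \int_{r\leq t/2}\int_{\R^3_v} |\widehat{Z}^\kappa f|\, dv\, dx \;\lesssim\; (1+t)^{-3}\int_{\Sigma_t}\int_{\R^3_v} z^3|\widehat{Z}^\kappa f|\, dv\, dx \;\lesssim\; \epsilon(1+t)^{\delta-3}, $$
this gives $\mathfrak{I}^2_{r\leq t/2} \lesssim \epsilon^2(1+t)^{\delta-2}$.

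\textbf{Main obstacle.} The delicate point is the interior estimate in the chargeless non--spherically-symmetric sub-case, since no pointwise decay on $\int |\widehat{Z}^\kappa f|\, dv$ is available when $|\kappa|$ is close to the top order $N$, so neither the simple $L^\infty$-$L^1$ pairing (which only gives $(1+t)^{-1}$) nor a crude Cauchy--Schwarz with \eqref{bootL2} (which loses powers of $t$ from the volume of the interior ball) suffices. The proposed rescue is the combination above: the $(1+r)$-cancellation reduces $\alpha z^2 |\nabla_x Z^\gamma \phi|$ to an essentially $r$-independent bound $\lesssim \epsilon(1+t)$, while the interior-$L^1$ smallness of $|\widehat{Z}^\kappa f|$ of order $\epsilon(1+t)^{\delta-3}$ (gained by effectively ``dividing'' the uniform bootstrap \eqref{boot2} by the lower bound $z^3 \gtrsim (1+t)^3$ valid on $r\leq t/2$) supplies exactly the $(1+t)^{-2}$ factor needed to beat the target rate $(1+t)^{-5/4}$ with margin.
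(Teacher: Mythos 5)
Your proof is correct, but it takes a genuinely different route from the paper in both the case decomposition and the key estimate.

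The paper's case split is driven by $|\gamma|$: if $|\gamma| \geq N-2$, it simply notes $\mathfrak{I}^2_{\gamma,\kappa} \leq \mathfrak{I}^3_{\gamma,\kappa}$ (since $z \geq 1$) and reuses the bound $\mathfrak{I}^3_{\gamma,\kappa}(t) \lesssim \epsilon^2(1+t)^{-3/2+\delta}$ already established in the proof of Proposition \ref{pro2}; if $|\gamma| \leq N-3$, the decisive observation is the one-line inequality
$$ z\frac{r}{1+t+r}+1 \hspace{2mm} \leq \hspace{2mm} z\left(\frac{r}{1+t+r}+\frac{1}{\tau_-}\right) $$
(which uses $\tau_- \leq z$ from Lemma \ref{zprop}), followed by the pointwise bound $\left(\tfrac{r}{1+t+r}+\tau_-^{-1}\right)|\nabla_x Z^{\gamma}\phi| \lesssim \epsilon(1+t)^{-2}$, valid uniformly on $[0,T[\times\R^3$. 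This collapses the whole integrand to $\tfrac{\epsilon}{(1+t)^2}z^3|\widehat{Z}^\kappa f|$ with no spatial split, and the bootstrap bound \eqref{boot2} closes it in one step, giving the slightly better rate $\epsilon^2(1+t)^{\delta-2}$ in this regime.

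Your split "$|\kappa|\leq N-6$" versus "$|\gamma|\leq N-6$" is also exhaustive for $N\geq 12$ and both branches go through. Your first branch replaces the paper's reuse of $\mathfrak{I}^3$ by an explicit Cauchy--Schwarz against the $L^2$ norm of $((1+r)^2\tau_-^2)^{-1}$, which you correctly estimate as $\lesssim (1+t)^{-1}$ by the concentration near $r=t$; this is a slightly more self-contained version of the same idea (and strictly more general, since it also covers low-$|\gamma|$ pairs). Your second branch is where the approaches really diverge: in place of the paper's $\tau_-$-absorption trick you do a two-fold split (spherical versus non-spherical $Z^{\gamma}$, then interior versus exterior at $r=t/2$), and in the interior you exploit the $(1+r)$-cancellation between the weight $\alpha$ and $|\nabla_x Z^{\gamma}\phi|$ together with the lower bound $z\gtrsim 1+t$ to divide the $z^3$-weighted bootstrap norm by $(1+t)^3$. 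This is a valid workaround, but it is longer and the chargeless/non-spherical distinction you introduce is ultimately unused since only the pointwise bound $|\nabla_x Z^{\gamma}\phi|\lesssim \epsilon((1+t)(1+r))^{-1}$ (valid for all $|\gamma|\leq N-3$) actually enters. The paper's trick, which bundles the ``$+1$'' of $\alpha$ into $z/\tau_-$ from the start, avoids all the sub-splitting and is worth internalizing.
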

Since $\E^5_N[f] (0) \leq \epsilon$, the combination of the energy inequality \eqref{energyfff}, Lemma \ref{reducing}, Corollary \ref{Corz} and Proposition \ref{pro1} (respectively \ref{pro2} and \ref{pro3}) imply that, for all $t \in [0,T[$ and if $\epsilon$ is small enough,
$$  \E^5_{N-3}[f](t) \hspace{1mm} \leq \hspace{1mm} 2\epsilon (1+t)^{\delta} \hspace{6mm} \text{(respectively} \hspace{2mm} \E^3_N[f](t) \hspace{1mm} \leq \hspace{1mm} 2\epsilon (1+t)^{\delta} \hspace{6mm} \text{and} \hspace{6mm} \E^2_N[f](t) \hspace{1mm} \leq \hspace{1mm} 2\epsilon \text{)}.$$
\subsection{Proof of Propositions \ref{pro1} and \ref{pro2}}

Let us fix $t \in [0,T[$ as well as multi-indices $\gamma$ and $\kappa$ satisfying 
$$|\gamma|+|\kappa| \leq N-2, \hspace{10mm} |\gamma| \leq N-3 \hspace{6mm} \text{and} \hspace{6mm} |\kappa| \leq N-3.$$
Using successively $z \lesssim 1+t+r$, the pointwise estimate on $\nabla_x Z^{\gamma} \phi$ given by Proposition \ref{estiphi} and then the bootstrap assumption \eqref{boot1}, we obtain
\begin{equation}\label{eq:followingthecomp}
 \mathfrak{I}^5_{\gamma, \kappa}(t) \hspace{2mm} \lesssim \hspace{2mm}  \left\|(1+r) \nabla_x Z^{\gamma} \phi \right\|_{L^{\infty}(\Sigma_t)} \left\| \int_{\R^3_v} z^5 |\ZZ^{\kappa} f| dv \right\|_{L^1(\Sigma_t)} \hspace{2mm} \lesssim \hspace{2mm} \frac{\epsilon}{1+t} \E^5_{N-3}[f](t)  \hspace{2mm} \lesssim \hspace{2mm} \frac{\epsilon^2}{ (1+t)^{1-\delta}},
\end{equation}
which concludes the proof of Proposition \ref{pro1}. We now turn on Proposition \ref{pro2} and we fix multi-indices $\gamma$ and $\kappa$ such that
$$|\gamma|+|\kappa| \leq N+1, \hspace{10mm} |\gamma| \leq N \hspace{6mm} \text{and} \hspace{6mm} |\kappa| \leq N.$$
\begin{itemize}
\item If $|\gamma| \leq N-3$, then, following the computations of \eqref{eq:followingthecomp} but using this time the bootstrap assumption \eqref{boot2}, we get
$$ \mathfrak{I}^3_{\gamma, \kappa}(t) \hspace{2mm} \lesssim \hspace{2mm} \frac{\epsilon}{1+t} \E^3_N[f](t) \hspace{2mm} \lesssim \hspace{2mm} \frac{\epsilon^2}{ (1+t)^{1-\delta}}.$$
\item Otherwise, $|\gamma| \geq N-2$ so $|\kappa| \leq 3 \leq N-6$. Hence, the pointwise decay estimate \eqref{decayf} and Proposition \ref{estiphi} gives us 
$$\forall \hspace{0.5mm} t \in [0,T[ , \hspace{1cm} \left\| \int_{\R^3_v} z^3 |\ZZ^{\kappa} f | dv \right\|_{L^{\infty}(\Sigma_t)} \hspace{2mm} \lesssim \hspace{2mm} \frac{\epsilon}{ (1+t)^{2-\delta}} \hspace{6mm} \text{and} \hspace{6mm} \left\| Z^{\gamma} \phi \right\|_{L^2(\Sigma_t)} \hspace{2mm} \lesssim \hspace{2mm} \frac{\epsilon}{\sqrt{1+t}}.$$
Thus, using the Cauchy-Schwarz inequality in $x$ and then in $v$, we obtain
\begin{eqnarray}
\nonumber \mathfrak{I}^3_{\gamma, \kappa}(t) & \lesssim &  \int_{\Sigma_t} \left| \nabla_x Z^{\gamma} \phi \right| \int_{\R^3_v} z^4 \left| \ZZ^{\kappa} f \right| dv dx  \\
 \nonumber & \lesssim &   \left\| \nabla_x Z^{\gamma} \phi \right\|_{L^2(\Sigma_t)} \left\| \int_{\R^3_v} z^4 \left| \ZZ^{\kappa} f \right| dv \right\|_{L^2(\Sigma_t)}  \\ \nonumber
& \lesssim &  \frac{\epsilon}{\sqrt{1+t}} \left\| \int_{\R^3_v} z^3 \left| \ZZ^{\kappa} f \right| dv \right\|^{\frac{1}{2}}_{L^{\infty}(\Sigma_t)} \left\| \int_{\R^3_v} z^5 \left| \ZZ^{\kappa} f \right| dv \right\|^{\frac{1}{2}}_{L^1(\Sigma_t)}  \\ 
& \lesssim &  \frac{\epsilon^2}{(1+t)^{\frac{3-\delta}{2}}} \sqrt{\E^5_{N-3}[f](s)} ds \hspace{2mm} \lesssim \hspace{2mm}  \frac{\epsilon^2}{(1+t)^{\frac{3}{2}-\delta}} \hspace{2mm} \lesssim \hspace{2mm} \frac{\epsilon^2}{(1+t)^{1-\delta}}. \label{eq:final}
\end{eqnarray}
\end{itemize}

\subsection{Proof of Proposition \ref{pro3}}

Let $t \in [0,T[$ and $\gamma$, $\kappa$ be multi-indices satisfying 
$$|\gamma|+|\kappa| \leq N+1, \hspace{10mm} |\gamma| \leq N \hspace{6mm} \text{and} \hspace{6mm} |\kappa| \leq N.$$
\begin{itemize}
\item If $|\gamma| \geq N-2$, then $\mathfrak{I}^2_{\gamma, \kappa}(t) \leq \mathfrak{I}^3_{\gamma, \kappa}(t) \lesssim \epsilon^2 (1+t)^{-\frac{5}{4}}$ by \eqref{eq:final}.
\item Otherwise $|\gamma| \leq N-3$ and we have, as $\tau_-^{-1} \leq z$ by Lemma \ref{zprop},
$$ \mathfrak{I}^2_{\gamma, \kappa}(t) \hspace{2mm} \lesssim \hspace{2mm} \int_{\Sigma_t} \int_{\R^3_v} \left( z\frac{r}{1+t+r}+\frac{z}{\tau_-} \right) \left| \nabla_x Z^{\gamma} \phi \right|  z^2 \left| \ZZ^{\kappa} f \right| dv dx .$$
Note now, according to Proposition \ref{estiphi}, that for all $(t,x) \in [0,T[ \times \R^3$,
$$\left( \frac{r}{1+t+r}+\tau_-^{-1} \right) \left|\nabla_x Z^{\gamma} \phi \right|(t,x) \hspace{2mm} \lesssim \hspace{2mm} \left( \frac{r}{1+t+r}+\frac{1}{1+|t-r|} \right) \frac{\epsilon }{(1+t)(1+r)} \hspace{2mm} \lesssim \hspace{2mm} \frac{\epsilon}{(1+t)^2}.$$
Consequently, using also the bootstrap assumption \eqref{boot2}, we finally get
$$ \mathfrak{I}^2_{\gamma, \kappa}(t) \hspace{2mm} \lesssim \hspace{2mm}  \frac{\epsilon}{(1+t)^2} \E^3_N[f](s) ds \hspace{2mm} \lesssim \hspace{2mm} \frac{\epsilon^2}{(1+t)^{2-\delta}} \hspace{2mm} \lesssim \hspace{2mm} \frac{\epsilon^2}{(1+t)^{\frac{5}{4}}}.$$
\end{itemize}

\section{$L^2$ estimate for the velocity average of the Vlasov field}\label{secL2}

The purpose of this section is to prove the following result.

\begin{Pro}\label{improbootL2}
There exists $\overline{C} >0$, a constant depending only on $N$, such that
$$ \forall \hspace{0.5mm} |\beta| \leq N, \hspace{3mm} \forall \hspace{0.5mm} t \in [0,T[, \hspace{1cm} \left\| \int_{\R^3_v} \left| \ZZ^{\beta} f \right| dv \right\|_{L^2(\Sigma_t)} \hspace{2mm} \leq \hspace{2mm} \overline{C} \frac{\epsilon}{1+t}.$$
\end{Pro}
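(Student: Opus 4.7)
The core interpolation I would rely on is the elementary inequality
\[
\bigg\| \int_{\R^3_v} |\widehat{Z}^\beta f| dv \bigg\|_{L^2(\Sigma_t)}^2 \;\leq\; \bigg\| \int_{\R^3_v} |\widehat{Z}^\beta f| dv \bigg\|_{L^\infty(\Sigma_t)} \cdot \bigg\| \int_{\R^3_v} |\widehat{Z}^\beta f| dv \bigg\|_{L^1(\Sigma_t)},
\]
applied for each $|\beta| \leq N$. The $L^1$ factor is immediately controlled by $\E_N^2[f](t) \lesssim \epsilon$ via \eqref{boot3}. The plan is then to split the proof into a \emph{low-order} range, where a pointwise bound is available, and a \emph{top-order} range, which is genuinely harder.

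For $|\beta| \leq N-3$, the pointwise estimate \eqref{decayf2} yields
\[
\bigg\| \int_{\R^3_v} |\widehat{Z}^\beta f| dv \bigg\|_{L^\infty(\Sigma_t)} \;\lesssim\; \sup_{r \geq 0} \frac{\epsilon}{\tau_-^2 (1+r)^2} \;\lesssim\; \frac{\epsilon}{(1+t)^2},
\]
the supremum being attained near $r = 0$ and near $r = t$. Plugging into the interpolation above gives $\|u_\beta\|_{L^2}^2 \lesssim \epsilon^2/(1+t)^2$, with a constant depending only on $N$, which is the desired improvement.

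For $N-3 < |\beta| \leq N$, the pointwise bound of Proposition \ref{decay} is no longer available, because applying it to $\widehat{Z}^\beta f$ would require control of $\widehat{Z}^{\xi+\beta} f$ for $|\xi| \leq 3$, exceeding the maximal commutation order $N$. Here I would use the Duhamel representation along characteristics of $\T_\phi$, writing $\widehat{Z}^\beta f(t,x,v)$ as the sum of $\widehat{Z}^\beta f_0(X(0),V(0))$ and a time-integrated source built from the commutation formula of Proposition \ref{comu}. Integrating in $v$ and performing the change of variable $v \mapsto V(0)$ (whose Jacobian is close to $1$ by the smallness of $\nabla_x \phi$ from Proposition \ref{estiphi}) converts the initial-data contribution into a spherical-mean-type expression on $\mathbb{S}^2$, for which the classical $(1+t)^{-1}$ $L^2_x$ dispersion (with a small loss of regularity, absorbed by the $N+3$ derivatives on $f_0$) can be invoked. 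The source contribution would be handled by Minkowski in time together with the pointwise decay of $\nabla_x Z^\gamma \phi$ from Proposition \ref{estiphi} and, crucially, the bootstrap \eqref{bootL2} \emph{only} at strictly lower order $|\kappa| \leq |\beta| - 1 \leq N - 1$, so that no circularity arises.

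The main obstacle is, unambiguously, the top-order case: the absence of an $L^\infty$ bound on $u_\beta$ forces one to find an alternative decay mechanism. The null structure established in Lemma \ref{reducing} will likely play a key role, as it permits trading a derivative of $\phi$ (where only weak decay is available at top order) for a weight $z$ on the Vlasov field (where strong energy bounds are available from \eqref{boot2}--\eqref{boot3}). Keeping the final constant $\overline{C}$ independent of $C_{L^2}$ requires that the bootstrap $L^2$ norm \eqref{bootL2} only appear at strictly lower orders, or multiplied by a smallness factor coming from the pointwise decay of $\nabla_x \phi$, so that $\epsilon$ can absorb it.
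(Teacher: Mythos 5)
Your low-order treatment ($|\beta|\le N-3$) is exactly the paper's: square the $L^2$ norm, split as $L^\infty\times L^1$, use \eqref{decayf2} for the pointwise factor and \eqref{boot3} for the $L^1$ factor. Your identification of the top-order case as the genuine obstruction is also correct. But the route you sketch for it is a genuinely different one from the paper's, and as stated it has a gap that I do not think is repairable with the ingredients listed.

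The paper, following \cite{FJS} (subsection $4.5.7$), collects the derivatives of order $N-5$ to $N$ into a vector $R$, the derivatives of order $\le N-6$ into a vector $W$, and exploits the block structure $\T_\phi(R)+AR=BW$, $\T_\phi(W)=DW$ in which the ``diagonal'' matrix $A$ only involves $\nabla_x Z^\gamma\phi$ for $|\gamma|\le N-6$. It splits $R=H+G$. For $H$, the homogeneous piece, the equation $\T_\phi(H)+AH=0$ can still be commuted three more times because $A$ is low-order, so the standard weighted $L^1$ propagation of Section \ref{Secbootf} plus Proposition \ref{decay} gives an $L^\infty$ bound on $\int_v|H|dv$, and hence an $L^2$ bound by interpolation. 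For $G$, the crucial observation is that one \emph{cannot} commute its transport equation (the source $BW$ involves $\nabla_x Z^\alpha\phi$ with $|\alpha|$ up to $N$, whose derivatives are not controlled), and instead one factors $G=KW$ where $K$ solves $\T_\phi^\chi(K)+AK+KD=B$; then $\big\|\int_v|G|dv\big\|_{L^2}^2\lesssim\|\int_v|W|dv\|_{L^\infty}\,\mathcal E_G(t)$, and $\mathcal E_G$ is bounded by a secondary bootstrap using the null structure on $A,B,D$ from Lemma \ref{reducing}. Your Duhamel proposal does not contain this $G=KW$ idea, and it is precisely the device that avoids commuting into the derivative count above $N$.

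The step I believe actually breaks is the appeal to a ``classical $(1+t)^{-1}$ $L^2_x$ dispersion'' for the initial-data contribution. For the free massless transport, $\big\|\int_v g_0(x-t\hat v,v)dv\big\|_{L^2_x}$ does decay like $(1+t)^{-1}$ if one has enough extra integrability in $x$, but this is not an off-the-shelf estimate one can quote: it is established essentially by the same $L^\infty$--$L^1$ argument you are trying to avoid at top order (it is not a Fourier-dispersive estimate with a textbook reference), and in the present nonlinear setting the characteristics are perturbed, $X_0(t,x,v)$ is not an affine function of $(x,v)$, and the change of variables $v\mapsto V_0(t,x,v)$ mixes $x$ and $v$ nontrivially, so ``Jacobian close to $1$'' in $v$ alone is not the full story. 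Making this precise would amount to redeveloping a perturbed Strichartz-type estimate that the paper deliberately avoids (it would undermine the paper's goal of being adaptable to curved backgrounds). Secondly, your claim that \eqref{bootL2} enters only at order $\le|\beta|-1$ is not accurate: after Lemma \ref{reducing}, $\nabla_v\widehat Z^\kappa f$ with $|\kappa|=N-1$ produces terms of the form $\widehat Z\widehat Z^\kappa f$, i.e.\ genuine order-$N$ derivatives with a weight, so the top-order bootstrap is re-used; you do gesture at the fallback (absorb $C_{L^2}$ by an $\epsilon$-small factor from $\|\nabla_x\phi\|_{L^\infty}$), and that is indeed the correct mechanism in the paper, but the quantity being estimated there is a weighted $L^1$ norm of $|K|^2|W|$, not the unweighted $L^2$ norm of \eqref{bootL2}, and the two are not interchangeable without the $K,W$ factoring.

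In short: your decomposition of the difficulty is right, your low-order step is right, but the top-order argument substitutes for the paper's $H/G$--plus--$KW$ structure a direct dispersive estimate whose proof is the very thing at issue, and the ``no circularity at top order'' claim does not hold as stated.
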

This improves in particular the bootstrap assumption \eqref{bootL2} provided that $C_{L^2}$ is chosen larger than $\overline{C}+1$. Note first that, using the Cauchy-Schwarz-inequality in $v$ and then the pointwise decay estimate \eqref{decayf2} as well as the bootstrap assumption \eqref{boot3}, we have, for all $|\beta| \leq N-3$ and $t \in [0,T[$,
$$ \left\| \int_{\R^3_v} \left| \ZZ^{\beta} f \right| dv \right\|^2_{L^2(\Sigma_t)} \hspace{1mm} \lesssim \hspace{2mm}  \left\| \int_{\R^3_v} \left| \ZZ^{\beta} f \right| dv \right\|_{L^{\infty}(\Sigma_t)}  \left\| \int_{\R^3_v} \left| \ZZ^{\beta} f \right| dv \right\|_{L^1(\Sigma_t)} \hspace{1mm} \lesssim \hspace{2mm} \frac{\epsilon}{(1+t)^2} \E^2_N[f](t) \hspace{2mm} \lesssim \hspace{2mm} \frac{\epsilon^2}{(1+t)^2}.$$
It then remains to prove such an inequality for the higher order derivatives. For this, we proceed as in \cite{FJS} (subsection $4.5.7$) and we introduce
\begin{eqnarray}
\nonumber \mathcal{I} & := & \{ \beta \hspace{2mm} \text{multi-index} \hspace{1mm} / \hspace{1mm} N-5 \leq |\beta| \leq N \}  \hspace{2mm} = \hspace{2mm} \{ \beta^{1},...,\beta^{|I|} \},  \\ \nonumber
 \mathfrak{I} & := & \{ \xi \hspace{2mm} \text{multi-index} \hspace{1mm} / \hspace{1mm}  |\xi| \leq N-6 \} \hspace{2mm} = \hspace{2mm} \{ \xi^1,...,\xi^{|\I|} \} .
\end{eqnarray}
Let also $R$ and $W$ by two vector valued fields of respective length $|\mathcal{I}|$ and $|\I|$ such that
$$ R_i= \widehat{Z}^{\beta^i}f \hspace{10mm} \text{and} \hspace{10mm} W_i = \widehat{Z}^{\xi^i}f.$$
The goal now is to control the $L^2$ norm of $\int_{\R^3_v} |R|dv$. We put the derivatives of order $N-5$, $N-4$ and $N-3$ in $\mathcal{I}$ instead of $\mathcal{J}$ since our proof requires a pointwise decay estimate on $\int_{\R^3_v} z^3|W| dv$.
\begin{Def}
We call good coefficient any function $c \in C^{\infty}( \R^3_v , \R)$ such that
$$\forall \hspace{0.5mm} k \in \mathbb{N}, \hspace{2mm} \exists \hspace{0.5mm} C_k >0, \hspace{1cm} \left\| |v|^k \nabla_v^k c(v) \right\|_{L^{\infty}(\R^3_v)} \hspace{2mm} \leq \hspace{2mm} C_k.$$
\end{Def}
Note in particular that if $c$ is a good coefficient, such as $\frac{v^{\mu}}{v^0}$, and $\ZZ^{\beta} \in \VV^{|\beta|}$, then $\ZZ^{\beta} (c)$ is uniformly bounded in $v \in \R^3_v$. Let us now rewrite the Vlasov equations satisfied by the components of $R$ and $W$. We recall that the vector fields $Y_{pq}$ are defined by
$$Y_{pq} \hspace{2mm} = \hspace{2mm} \frac{v^p}{v^0} \partial_q -\frac{v^q}{v^0} \partial_p, \hspace{1cm} 1 \leq p < q \leq 3.$$

\begin{Pro}\label{L2bilan}
There exist three matrix-valued functions $A : [0,T[ \times \R^3_x \times \R^3_v \rightarrow \mathfrak M_{|\mathcal{I}|}$, $B : [0,T[ \times \R^3 \times \R^3_v  \rightarrow  \mathfrak M_{|\mathcal{I}|,|\I|}(\Vv)$ and $D : [0,T[ \times \R^3 \times \R^3_v \rightarrow  \mathfrak M_{|\I|}(\R)$ such that
$$\T_{\phi}(R)+AR \hspace{2mm} = \hspace{2mm} B W  \hspace{1.2cm} \text{and} \hspace{1.2cm} \T_{\phi}(W) \hspace{2mm} = \hspace{2mm} DW.$$
Moreover, if $i \in \llbracket 1, |\II| \rrbracket$,  $\T_{\phi}(R_i)$ is a linear combination of terms of the form
\begin{eqnarray}
\nonumber c^{\gamma}_j(v) \cdot \partial_l Z^{\gamma} \phi \cdot R_j, \hspace{4mm} & & \hspace{4mm} c^{\gamma , p,q}_j(v) \cdot x_n \cdot Y_{pq} Z^{\gamma} \phi \cdot R_j, \hspace{1.5cm} \text{with} \hspace{3mm} |\gamma| \leq N-6 \hspace{3mm} \text{and} \hspace{3mm} j \in \llbracket 1 , |\II| \rrbracket,   \\ 
\nonumber c^{\alpha}_k(v) \cdot \partial_l Z^{\alpha} \phi \cdot W_k, \hspace{4mm} & & \hspace{4mm} c^{\gamma , p,q}_k(v) \cdot x_n \cdot Y_{pq} Z^{\alpha} \phi \cdot W_k, \hspace{1.4cm} \text{with} \hspace{4.5mm} |\alpha| \leq N, \hspace{6mm} k \in \llbracket 1 , |\I| \rrbracket,
\end{eqnarray}
$(l,n,p,q) \in \llbracket 1,3 \rrbracket^4$ and where $c^{\gamma}_j$, $c^{\alpha}_k$, $c^{\gamma , p ,q}_j$, $c^{\alpha , p , q}_k$ are good coefficients. Similarly, the matrix $D$ is such that $\T_{\phi} ( W_k)$ can be bounded by a linear combination of terms of the form
$$ \left| \nabla_x Z^{\gamma} \phi \cdot W_k \right|, \hspace{10mm}  \frac{z r}{1+t+r} \left| \nabla_x Z^{\gamma} \phi \cdot W_k \right|, \hspace{1.2cm} \text{with} \hspace{6mm} |\gamma| \leq N \hspace{4mm} \text{and} \hspace{4mm} k \in \llbracket 1 , |\I| \rrbracket .$$
The matrix valued field $W$ satisfies the following pointwise decay estimates,
$$\forall \hspace{0.5mm} (t,x) \in [0,T[ \times \R^3, \hspace{1cm} \int_{\R^3_v} z^3|W| dv \hspace{2mm} \lesssim \hspace{2mm} \frac{\epsilon}{(1+t)^{2-\delta}} \hspace{1cm} \text{and} \hspace{1cm} \int_{\R^3_v} |W| dv \hspace{2mm} \lesssim \hspace{2mm} \frac{\epsilon}{(1+t)^2}.$$
\end{Pro}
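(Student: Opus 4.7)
The plan is to deduce the full structure from the commutation formula of Proposition \ref{comu},
\[
 \T_\phi(\widehat{Z}^{\beta} f) \;=\; \sum_{|\kappa|+|\alpha|\leq|\beta|} C^{\beta}_{\kappa,\alpha}\, v^0 \nabla_x Z^\kappa \phi \cdot \nabla_v \widehat{Z}^\alpha f,
\]
by unpacking each model term $v^0 \nabla_x Z^\kappa \phi \cdot \nabla_v g$ algebraically so that the $v$-gradient becomes a combination of vector fields of $\VV$ acting on $g$ (with good coefficients), at the cost of introducing $x$-weights paired with null-form derivatives of $\phi$. Concretely, combining the identity $\sum_l v^l(v^l \partial_{v^k} - v^k \partial_{v^l}) = |v|^2 \partial_{v^k} - v^k S_v$ with $\widehat{\Omega}_{lk} = \Omega_{lk} + v^l \partial_{v^k} - v^k \partial_{v^l}$, I will obtain
\[
 |v|^2 \partial_{v^k} g \;=\; v^k S_v g \;-\; v^l \widehat{\Omega}_{lk} g \;+\; v^l \big( x^l \partial_k - x^k \partial_l \big) g.
\]
After multiplying by $\partial_k Z^\kappa \phi$, summing in $k$, and using $v^0 = |v| \geq 1$ on the support of $f$ (by Proposition \ref{vanishsmallv}), the radial term yields $(v^k/|v|)\partial_k Z^\kappa \phi \cdot S_v g$, the $\widehat{\Omega}$-term yields $-(v^l/|v|)\partial_k Z^\kappa \phi \cdot \widehat{\Omega}_{lk}g$, and after a swap of dummy indices the two Cartesian crossed $x$-contributions recombine as $-\sum_{k,l} x^l\, Y_{kl} Z^\kappa \phi \cdot \partial_k g$. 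This is precisely how the null form $Y_{pq} Z^\kappa \phi$ multiplied by an $x$-weight emerges; the prefactors $v^k/|v|$ and $v^l/|v|$ are good coefficients in the required sense.

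Applied to each summand in $\T_\phi(\widehat{Z}^{\beta^i} f)$, the identity produces exactly the two shapes allowed in the statement, with the $f$-derivative appearing as $\widehat{Z}^{\alpha'} f$ of order $|\alpha'| = |\alpha|+1$. If $|\alpha'| \geq N-5$, then $\widehat{Z}^{\alpha'} f$ is a component of $R$, and $|\kappa| \leq |\beta^i|-|\alpha| \leq N-(N-6) = 6$, which is $\leq N-6$ since $N \geq 12$; these contributions assemble into $A$. Otherwise $|\alpha'| \leq N-6$, so $\widehat{Z}^{\alpha'} f$ is a component of $W$ and one only has $|\kappa| \leq N$; these contributions assemble into $B$. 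The decomposition $\T_\phi(R) + AR = BW$ with entries of the claimed shape then follows directly.

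For the equation on $W$, since every component $W_k = \widehat{Z}^{\xi^k} f$ has $|\xi^k| \leq N-6$, applying the pointwise inequality of Lemma \ref{reducing} term by term in the commutation formula gives directly the stated bound on $|\T_\phi(W_k)|$ by combinations of $|\nabla_x Z^\gamma \phi \cdot W_j|$ and $\tfrac{zr}{1+t+r}|\nabla_x Z^\gamma \phi \cdot W_j|$ with $|\gamma|\leq N$. The pointwise decay estimates on $\int z^3 |W|\, dv$ and $\int |W|\, dv$ follow at once from \eqref{decayf} and \eqref{decayf2} by taking the worst case over $\tau_-$ and $1+r$; both estimates are available because $W$ contains only derivatives of order $\leq N-6 \leq N-3$. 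The one genuinely non-routine step is the null-form recombination of the first paragraph: naively the algebra yields two separate crossed contributions in $x$, and only after the dummy-index swap does the identity $\sum_{k,l} v^l(x^l \partial_k - x^k \partial_l)Z^\kappa\phi \cdot \partial_k g = -\sum_{k,l} v^0 x^l Y_{kl} Z^\kappa \phi \cdot \partial_k g$ surface. Everything else is orderly multi-index bookkeeping that closes precisely because $N \geq 12$.
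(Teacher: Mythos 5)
Your argument is correct and reaches the same structural conclusion as the paper, but the central algebraic step is carried out by a genuinely different route. The paper imitates the proof of Lemma~\ref{reducing} and expands $v^0\nabla_x Z^\gamma\phi\cdot\nabla_v g$ in the spherical frame $(\partial_{|v|},\tfrac{1}{|v|}\partial_{\theta^1_v},\tfrac{1}{|v|}\partial_{\theta^2_v})$; the radial part produces the $S_v$ term, and the angular part is rewritten via $\tfrac{1}{|v|}\partial_{\theta^i_v}=\sum_{k<l}C^i_{k,l}(\tfrac{v^k}{v^0}\partial_{v^l}-\tfrac{v^l}{v^0}\partial_{v^k})$, which simultaneously makes a null form $Y_{pq}$ appear on $\phi$ and a $\widehat{\Omega}_{kl}-\Omega_{kl}$ appear on $g$. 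You instead derive the purely Cartesian identity
\[
|v|^2\partial_{v^k}g \;=\; v^k S_v g \;+\; v^l\,\widehat{\Omega}_{lk} g \;-\; v^l\,(x^l\partial_k-x^k\partial_l)g,
\]
divide by $|v|$ (using $v^0=|v|\geq 1$ on the support), contract with $\partial_k Z^\kappa\phi$, and recover the null form by the dummy-index swap $\sum_{k,l}\tfrac{v^l}{|v|}(x^l\partial_k-x^k\partial_l)Z^\kappa\phi\cdot\partial_k g=-\sum_{k,l}x^l\,Y_{kl}Z^\kappa\phi\cdot\partial_k g$. The final lists of allowable term shapes coincide (the paper's middle term $c^{p,q}(v)Y_{pq}Z^\gamma\phi\cdot\widehat{\Omega}_{ab}g$ is itself of the first shape, since $Y_{pq}$ is a $v$-weighted combination of $\partial_l$'s), and your multi-index bookkeeping $|\kappa|\leq 6\leq N-6$ when the image derivative lands in $R$, and $|\kappa|\leq N$ when it lands in $W$, is exactly the paper's. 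One advantage of your route: because the paper's Lemma~\ref{reducing} only yields pointwise bounds, the paper has to redo the algebra as an equality inside this proof; your identity gives the exact matrix decomposition $T_\phi(R)+AR=BW$ directly, which is slightly cleaner. One caveat: as written, the signs on the $\widehat{\Omega}_{lk}$ and $(x^l\partial_k-x^k\partial_l)$ terms are flipped (the correct identity is the one displayed above); this is immaterial for a linear-combination statement since the good coefficients can absorb signs, but you should fix it. The treatment of the $W$-equation (commutation formula plus Lemma~\ref{reducing} applied to order $\leq N-6$ components) and the pointwise decay of $\int z^3|W|\,dv$ and $\int|W|\,dv$ from \eqref{decayf}--\eqref{decayf2} follow the paper exactly.
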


\begin{proof}
Let $i \in \llbracket 1 , |\II| \rrbracket$ so that $R_i = \ZZ^{\beta^i} f$, where $|\beta^i| \geq N-5$. According to the commutation formula of Proposition \ref{comu}, $\T_{\phi}(R_i)$ can be written as a linear combination of terms of the form
$$ v^0 \nabla_x Z^{\gamma} \phi \cdot \nabla_v \ZZ^{\beta} f, \hspace{1cm} \text{with} \hspace{1cm} |\gamma| +|\beta| \leq N \hspace{6mm} \text{and} \hspace{6mm} |\beta| \leq N-1.$$
Expanding it in the orthonormal spherical frame in $v$ as in Lemma \ref{reducing}, we obtain terms such as
\begin{equation}\label{eq:bilanL2} \frac{v^m}{v^0} \partial_m Z^{\gamma} \phi \cdot v^m \partial_{v^m} \ZZ^{\beta} f, \hspace{1cm} c^{p,q}(v) \cdot Y_{pq} Z^{\gamma} \phi \cdot \widehat{\Omega}_{ab} \ZZ^{\beta} f, \hspace{1cm} c^{p,q}(v) \cdot x_n \cdot Y_{pq} Z^{\gamma} \phi \cdot \partial_l \ZZ^{\beta} f,
\end{equation}
where $c^{p,q}(v)$ is a good coefficient and $(n,l,a,b,p,q) \in \llbracket 1 , 3 \rrbracket^{6}$.
\begin{itemize}
\item If $|\beta| \geq N-6$, then
$$ \exists \hspace{0.5mm} j_1, \hspace{2mm} j_2, \hspace{2mm} j_3 \in \llbracket 1, |\II| \rrbracket, \hspace{11mm} R_{j_1} = v^m \partial_{v^m} \ZZ^{\beta} f, \hspace{6mm} R_{j_2} = \widehat{\Omega}_{ab} \ZZ^{\beta} f , \hspace{6mm} R_{j_3} = \partial_l \ZZ^{\beta} f$$ 
and $|\gamma| \leq N- |\beta| \leq 6 \leq N-6$.
\item Otherwise, $|\beta| \leq N-7$ and 
$$ \exists \hspace{0.5mm} k_1, \hspace{2mm} k_2, \hspace{2mm} k_3 \in \llbracket 1, |\I| \rrbracket, \hspace{11mm} W_{k_1} = v^m \partial_{v^m} \ZZ^{\beta} f, \hspace{6mm} W_{k_2} = \widehat{\Omega}_{ab} \ZZ^{\beta} f , \hspace{6mm} W_{k_3} = \partial_l \ZZ^{\beta} f.$$ 
\end{itemize}
Hence, all the terms of \eqref{eq:bilanL2} have the requested form. The construction of the matrix $D$ directly follows from the commutation formula of Proposition \ref{comu}, applied to $\ZZ^{\xi^k} f= W_k$, and Lemma \ref{reducing}. The pointwise decay estimates satisfied by $\int_{\R^3_v} |W| dv$ are given by \eqref{decayf} and \eqref{decayf2}.
\end{proof}

Let us now split $R$ into two parts $H+G$, where
\begin{eqnarray}
\nonumber T^{\chi}_{\phi}(H)+AH & = & 0, \hspace{2cm} H(0,.,.) \hspace{2mm} = \hspace{2mm} R(0,.,.), \\ \nonumber
T^{\chi}_{\phi}(G)+AG & = & BW, \hspace{1.5cm} W(0,.,.) \hspace{2mm} = \hspace{2mm} 0.
\end{eqnarray}
In order to obtain an $L^2$ estimate on the velocity average of $|R|$, we will prove
\begin{itemize}
\item a pointwise decay estimate on $\int_{\R^3_v} |H| dv$ by propagating a weighted $L^1_{x,v}$ norm of $H$. The computations will be similar to those of Section \ref{Secbootf} and we will in particular take advantage of the pointwise decay estimate that we have at our disposal on $Z^{\kappa} A$, for $|\kappa| \leq 3$. This will allow us to obtain an $L^2$ estimate on $\int_{\R^3_v} |H| dv$.
\item Since we do not control the derivatives of the matrix $B$, one cannot commute the transport equation satisfied by $G$ in order to derive pointwise estimates on it. For this, we introduce $K$, the solution of $\T^{\chi}_F(K)+AK+K D=B$ such that $K(0, \cdot , \cdot )=0$. Now, remark that
$$\T^{\chi}_{\phi}(KW)+AKW \hspace{2mm} = \hspace{2mm} BW \hspace{1cm} \text{and} \hspace{1cm} KW(0,\cdot , \cdot) \hspace{2mm} = \hspace{2mm} 0,$$ which implies that $G=KW$. Recall that $\int_{\R^3_v} |W| dv$ is a decaying function, so the inequality
\begin{eqnarray}
\nonumber \left\| \int_{\R^3_v} |G| dv \right\|^2_{L^2(\Sigma_t)} & = & \left\| \int_{\R^3_v} |KW| dv \right\|^2_{L^2(\Sigma_t)} \hspace{2mm} \lesssim \hspace{2mm} \left\| \int_{\R^3_v} |W| dv \int_{\R^3_v} |K|^2 |W| dv \right\|_{L^1(\Sigma_t)} \\  
& \lesssim & \left\| \int_{\R^3_v} |W| dv \right\|_{L^{\infty}(\Sigma_t)} \left\| \int_{\R^3_v} |K|^2 |W| dv \right\|_{L^1(\Sigma_t)}, \label{eq:inhoG}
\end{eqnarray}
will allow us to prove an estimate on the $L^2$ norm of the velocity average of $G$, provided that we control
\begin{equation}\label{defEG}
 \E_G(t) \hspace{2mm} := \hspace{2mm} \sum_{i=0}^{|\II|} \sum_{j=0}^{|\I|} \sum_{q=0}^{|\I|}  \int_{\Sigma_t} \int_{\R^3_v} \left| K_i^j \right|^2 \left| W_q \right| dv dx .
\end{equation}
\end{itemize}
Note that in view of the definition of $W$ and $R$, $G=KW$ and $R$ vanish for $|v| \leq 1$ according to Proposition \ref{vanishsmallv}. Since $H=R-G$, we deduce that $H$ also satisfies this property. We then obtain 
\begin{Pro}\label{supportGH}
The vector valued fields $H$ and $G$ both vanish for all $|v| \leq 1$, which implies that
$$T_{\phi}(H)+AH \hspace{2mm}= \hspace{2mm} 0, \hspace{2cm} H(0,.,.) \hspace{2mm} = \hspace{2mm} R(0,.,.).$$
\end{Pro}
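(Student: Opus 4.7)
The plan leverages the vanishing property of $f$ already established in Proposition \ref{vanishsmallv} and propagates it through the decomposition $R = H + G$. First, I would revisit the proof in Section \ref{Secvanish}: the estimate $|V|(t) \geq 2 - C_0 \epsilon \int_0^{+\infty}(1+s)^{-2} ds$ yields, for $\epsilon$ sufficiently small, a strict lower bound $|V|(t) \geq 1 + \eta$ for some $\eta > 0$ along every characteristic of $\T^\chi_\phi$ starting with $|v| \geq 2$. Combined with the assumption $f_0(\cdot,v) = 0$ for $|v| \leq 2$ and the representation \eqref{charac}, this shows that $f$ actually vanishes on the open set $[0,T[\,\times \R^3_x \times \{|v| < 1 + \eta\}$, rather than merely on the closed slab $\{|v| \leq 1\}$.

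Next, every component of $R$ and $W$ has the form $\ZZ^{\alpha} f$ with $\ZZ^{\alpha} \in \VV^{|\alpha|}$. Since the vector fields of $\VV$ are local differential operators, and $f$ vanishes on the open set above, each such derivative $\ZZ^{\alpha} f$ also vanishes there. Thus every component of $R$ and every component of $W$ is zero for $|v| \leq 1$. The identity $G = KW$ (which holds by uniqueness of classical solutions, as both sides solve the linear transport problem $\T^\chi_\phi(\cdot) + A(\cdot) = BW$ with zero initial data) then propagates the vanishing to $G$, and $H = R - G$ forces $H$ itself to vanish for $|v| \leq 1$. Moreover, the same argument shows that $H$ is zero on the $v$-open set $\{|v| < 1 + \eta\}$.

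To deduce $\T_\phi(H) + AH = 0$, I would compute
\[
 \T_\phi(H) - \T^\chi_\phi(H) \;=\; \bigl(1-\chi(|v|)\bigr)\,\sigma\,v^0\, \nabla_x \phi \cdot \nabla_v H.
\]
If $|v| \geq 1$, the factor $1 - \chi(|v|)$ vanishes by \eqref{defchi}; if $|v| < 1$, then $H$ is identically zero on a $v$-open neighborhood of $(t,x,v)$, so $\nabla_v H(t,x,v) = 0$. In either case the right-hand side is zero, whence $\T_\phi(H) + AH = \T^\chi_\phi(H) + AH = 0$ by the defining equation for $H$, and the initial condition $H(0,\cdot,\cdot) = R(0,\cdot,\cdot)$ is built into the splitting. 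The only real subtlety, and what I would identify as the main point to get right, is the strict inequality $|V|(t) > 1$, which upgrades the vanishing of $f$ from a closed slab to an open one: without this, $\nabla_v H$ need not vanish at $|v|=1$ and the identification of $\T_\phi$ with $\T^\chi_\phi$ on the support of $H$ would fail.
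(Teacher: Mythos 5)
Your proof is correct and follows the same line as the paper: each component of $R$ and $W$ is a derivative $\ZZ^{\alpha} f$, so Proposition \ref{vanishsmallv} makes $R$ and $W$ vanish for $|v|\leq 1$; the identity $G=KW$ then propagates this to $G$, and $H=R-G$ vanishes there too, after which the cutoff can be dropped from the defining equation for $H$. One small correction to your closing remark: the upgrade to an \emph{open} vanishing set $\{|v|<1+\eta\}$ is not actually needed for the identification $\T_{\phi}(H)=\T^{\chi}_{\phi}(H)$. For $|v|<1$ the closed slab $\{|v|\leq 1\}$ already contains a $v$-neighborhood of the point on which $H$ vanishes, so $\nabla_v H=0$ there; and at $|v|=1$ the coefficient $1-\chi(|v|)$ is itself zero by \eqref{defchi} (since $\chi\equiv 1$ on $[1,+\infty)$), so no information about $\nabla_v H$ at $|v|=1$ is required. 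The strict bound $|V|(t)>1$ you extract from Section \ref{Secvanish} is certainly available and harmless, but the argument does not hinge on it.
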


\subsection{Study of the homogeneous system}

The purpose of this subsection is to obtain an $L^{\infty}$, and then an $L^2$, estimate on $\int_{\R^3_v} |H| dv$ through the use of Proposition \ref{decay}. We will then be led to commute three times the transport equation satisfied by $H$ and we then start by the following Lemma.

\begin{Lem}\label{comutaH}
Let $i \in \llbracket 1 , |\II| \rrbracket$, $\ZZ^{\beta} \in \VV^{|\beta|}$ and $\ZZ^{\zeta}  \in \VV^{|\zeta|}$,  with $|\beta|+|\zeta| \leq 3$. Then, $\ZZ^{\zeta} \left( T_{\phi} ( \ZZ^{\beta} H_i ) \right)$ can be bounded by a linear combination of
\begin{equation}\label{comuH}
 \left(1+ \frac{z r}{1+t+r} \right) \left| \nabla_x Z^{\gamma} \phi \right| | \ZZ^{\xi} H_j|, \hspace{10mm} \text{with} \hspace{10mm} |\gamma| \leq N-3, \hspace{5mm} |\xi| \leq 3 \hspace{5mm} \text{and} \hspace{5mm} j \in \llbracket 1, 3 \rrbracket.\end{equation}
\end{Lem}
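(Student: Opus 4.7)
The plan is to combine the transport equation $T_{\phi}(H_i) = -(AH)_i$ from Proposition \ref{supportGH} with the commutator identity of Proposition \ref{comu}, writing
$$\ZZ^{\zeta}\bigl(T_{\phi}(\ZZ^{\beta}H_i)\bigr) \;=\; -\,\ZZ^{\zeta}\ZZ^{\beta}\bigl((AH)_i\bigr) \;+\; \ZZ^{\zeta}\bigl([T_{\phi},\ZZ^{\beta}](H_i)\bigr),$$
and bounding the two pieces separately. The total number of $\VV$-derivatives floating around is at most $|\zeta|+|\beta|\leq 3$. Throughout, one exploits that the class of good coefficients is stable under $\VV$, so all purely $v$-dependent factors stay uniformly bounded.

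For the first piece, I would apply Leibniz to distribute the derivatives over the building blocks of $A$, which by Proposition \ref{L2bilan} are $c(v)\,\partial_l Z^{\gamma}\phi$ and $c(v)\,x_n Y_{pq}Z^{\gamma}\phi$ with $|\gamma|\leq N-6$. Derivatives of $\partial_l Z^{\gamma}\phi$ yield $\partial_l Z^{\gamma'}\phi$ with $|\gamma'|\leq N-3$, using the commutation rules $[\ZZ,\partial_l]\in\{0,\pm\partial_k\}$ and the fact that $S_v$ and the $v$-parts of $\widehat{\Omega}_{ij}$ annihilate $\phi$; this produces the ``$1$'' part of the claimed bound. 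For the coefficients of the form $x_n Y_{pq}Z^{\gamma'}\phi$ (which keep the same shape under further $\ZZ$-derivatives, with $|\gamma'|$ growing by at most one per derivative), I would invoke the null form estimate of Proposition \ref{null} to obtain
$$|x_n Y_{pq}Z^{\gamma'}\phi|\;\lesssim\;\frac{r}{1+t+r}\Bigl(\sum_{k<l}|\Omega_{kl}Z^{\gamma'}\phi|+\sum_{w\in\V}|w\,\nabla_x Z^{\gamma'}\phi|\Bigr).$$
The weighted gradient contribution, combined with $|w|\leq z$, gives exactly the $\frac{zr}{1+t+r}|\nabla_x Z^{\gamma'}\phi|$ factor in the target bound. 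The residue $|\Omega_{kl}Z^{\gamma'}\phi|$ is handled as in the proof of Lemma \ref{reducing}: if $Z^{\gamma'}\in\Vv_{\Sp}^{|\gamma'|}$, then $Z^{\gamma'}\phi$ is spherically symmetric by Proposition \ref{Prospheri} and this residue vanishes; otherwise $Z^{\gamma'}$ contains a translation which can be commuted outside using \eqref{comuonetra}, yielding $|\Omega_{kl}Z^{\gamma'}\phi|\lesssim\sum_{|\alpha|\leq|\gamma'|}|\nabla_x Z^{\alpha}\phi|$, which after multiplication by $\frac{r}{1+t+r}\leq 1$ is absorbed into the ``$1$'' prefactor.

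For the commutator piece, Proposition \ref{comu} writes $[T_{\phi},\ZZ^{\beta}](H_i)$ as a linear combination of $v^0\nabla_x Z^{\kappa}\phi\cdot\nabla_v\ZZ^{\alpha}H_i$ with $|\kappa|+|\alpha|\leq|\beta|$ and $|\alpha|<|\beta|$. I would apply Lemma \ref{reducing} pointwise to convert each such product into terms of the two claimed shapes, and then distribute $\ZZ^{\zeta}$ by Leibniz; derivatives falling on $\phi$-factors or on null form coefficients are then handled exactly as above, and derivatives falling on $H$ simply produce $\ZZ^{\xi}H_j$ with $|\xi|\leq|\zeta|+|\alpha|+1\leq 3$.

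The main technical obstacle is the coefficient $x_n Y_{pq}Z^{\gamma}\phi$ in $A$: since $Y_{pq}\notin\VV$, each further $\ZZ$ falling on such a factor must be reduced to the form $\bigl(1+\frac{zr}{1+t+r}\bigr)|\nabla_x Z^{\gamma'}\phi|$ via a careful combined use of Propositions \ref{null} and \ref{Prospheri} together with the translation commutation relation \eqref{comuonetra}, and one must track that this reduction survives the three-fold Leibniz expansion without ever exceeding the threshold $|\gamma'|\leq N-3$.
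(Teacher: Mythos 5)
Your proposal follows essentially the same route as the paper: decompose the quantity into a ``source'' piece coming from $\T_\phi(H_i)=-(AH)_i$ (equivalently $\ZZ^\zeta\ZZ^\beta(\T_\phi(H_i))$) and a commutator piece, Leibniz over the building blocks of $A$ listed in Proposition \ref{L2bilan}, handle the $x_n Y_{pq}Z^\gamma\phi$ factors via Proposition \ref{null} combined with Proposition \ref{Prospheri} and \eqref{comuonetra}, and use the good-coefficient class and the commutation relations of $Y_{pq}$ with $\VV$ to propagate the structure. Your count $|\gamma'|\leq (N-6)+3=N-3$ is the right one, and the observation that $\ZZ^\alpha(x_n)\in\{0,\pm x_1,\pm x_2,\pm x_3\}$ is implicitly used.

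There is one place where your phrasing, read literally, is logically out of order and should be fixed. For the commutator piece you write that you would ``apply Lemma \ref{reducing} pointwise to convert each such product into terms of the two claimed shapes, and then distribute $\ZZ^\zeta$ by Leibniz.'' Lemma \ref{reducing} is an \emph{inequality} involving absolute values, and one cannot subsequently apply $\ZZ^\zeta$ to it. The order must be reversed: first produce an \emph{identity} (either the spherical-frame decomposition of $v^0\nabla_x Z^\kappa\phi\cdot\nabla_v g$ that appears inside the proof of Lemma \ref{reducing}, or, as the paper does, the algebraic rewriting $\ZZ^\zeta\big([\T_\phi,\ZZ^\beta]H_i\big)=[\T_\phi,\ZZ^\zeta\ZZ^\beta](H_i)-[\T_\phi,\ZZ^\zeta](\ZZ^\beta H_i)$), then distribute $\ZZ^\zeta$ or invoke Proposition \ref{comu}, and only at the very end apply the pointwise bound of Lemma \ref{reducing}. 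The paper's rewriting in terms of the two commutators $[\T_\phi,\ZZ^\zeta\ZZ^\beta]$ and $[\T_\phi,\ZZ^\zeta]$ is the cleaner option, since Proposition \ref{comu} and Lemma \ref{reducing} then apply directly with no further Leibniz expansion over $Y_{pq}$-factors for this piece; your Leibniz route works too, but you would need to spell out the commutation relations between $\VV$ and $Y_{pq}$ (as the paper records for the $A$-piece) in order to keep the structure.
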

\begin{proof}
Let $|\zeta|+|\beta| \leq 3$ and note first that
$$\ZZ^{\zeta} \left( T_{\phi} ( \ZZ^{\beta} H_i ) \right) \hspace{2mm} = \hspace{2mm} - [T_{\phi} , \ZZ^{\zeta}  ]( \ZZ^{\beta} H_i )+ [T_{\phi} ,\ZZ^{\zeta} \ZZ^{\beta} ](H_i)- \ZZ^{\zeta} \ZZ^{\beta} \left( T_{\phi} ( H_i ) \right).$$
Applying the commutation formula of Proposition \ref{comu} and then Lemma \ref{reducing}, we directly obtain that we can bound $-[T_{\phi} , \ZZ^{\zeta} ]( \ZZ^{\beta} H_i)+ [T_{\phi} ,\ZZ^{\zeta} \ZZ^{\beta} ](H_i)$ by a linear combination of terms given in \eqref{comuH}. By Proposition \ref{L2bilan} and the fact that if $c(v)$ is a good coefficient, then $\ZZ^{\kappa} (c) \in L^{\infty}(\R^3_v)$, we can estimate $\ZZ^{\zeta}  \ZZ^{\beta} (T_{\phi} (H_i))$ by a linear combination of
$$ \left| \ZZ^{\xi} \left( \nabla_x Z^{\gamma} \phi \cdot H_j \right) \right| \hspace{5mm} \text{and} \hspace{5mm}  \left| \ZZ^{\xi} \left(x_n \cdot Y_{pq} Z^{\gamma} \phi \cdot H_j \right) \right|, \hspace{1cm} \text{with} \hspace{5mm} j \in \llbracket 1, |\II| \rrbracket, \hspace{3mm} |\gamma| \leq N-6, \hspace{3mm} |\xi| \leq 3$$
and $(n,p,q) \in \llbracket 1,3 \rrbracket^3$. Note now that
$$ [ \partial_{\mu} , Y_{pq} ] \hspace{2mm} = \hspace{2mm} [ v^i \partial_{v^i} , Y_{pq} ] \hspace{2mm} = \hspace{2mm} 0, \hspace{1cm} [ S , Y_{pq} ] \hspace{2mm} = \hspace{2mm} -Y_{pq}, \hspace{1cm} [ \widehat{\Omega}_{ij} , Y_{pq} ] \hspace{2mm} = \hspace{2mm} \delta^i_p Y_{qj}+\delta^i_q Y_{jp}+\delta^j_p Y_{iq}+\delta^j_q Y_{pi}.$$
Using also that, for $\ZZ^{\alpha} \in \VV^{|\alpha|}$, $\ZZ^{\alpha} (x_n) \in \{0, \pm x_1, \pm x_2, \pm x_3 \}$ and
$$ \left| \ZZ^{\alpha} \nabla_x Z^{\gamma} \phi \right| \hspace{2mm} \lesssim \hspace{2mm} \sum_{|\kappa| \leq |\alpha|+|\gamma|}  \left| \nabla_x Z^{\kappa} \phi \right|,$$
we obtain, using Leibniz formula, that $\ZZ^{\beta} (T_{\phi} (H_i))$ can be bounded by a linear combination of
$$ \left|  \nabla_x Z^{\kappa} \phi \cdot \ZZ^{\theta} H_j  \right| \hspace{10mm} \text{and} \hspace{10mm}  r\left| Y_{pq} Z^{\kappa} \phi \cdot \ZZ^{\theta} H_j  \right|, \hspace{1cm} \text{with} \hspace{10mm} |\kappa| \leq N-3 \hspace{5mm} \text{and} \hspace{5mm} |\theta| \leq 3.$$
It then only remains to use \eqref{eq:number1} in order to estimate $r\left| Y_{pq} Z^{\kappa} \phi \cdot \ZZ^{\theta} H_j  \right|$ by the terms given in \eqref{comuH}.
\end{proof}
Let us now introduce the following energy norms
$$ \E^3_H(t) \hspace{2mm} := \hspace{2mm} \sum_{i=1}^{|\II|} \E^3_3[H_i](t) \hspace{1cm} \text{and} \hspace{1cm} \E^2_H(t) \hspace{2mm} := \hspace{2mm} \sum_{i=1}^{|\II|} \E^2_3[H_i](t).$$
\begin{Pro}\label{HPro}
If $\epsilon$ is small enough, we have $\E^3_H(t) \leq 2C_H\epsilon (1+t)^{\delta}$ and $\E^2_H(t) \leq 2C_H \epsilon$ for all $t \in [0,T[$. It implies
$$ \forall \hspace{0.5mm} t \in [0,T[, \hspace{1cm} \left\| \int_{\R^3_v} |H| dv \right\|_{L^2(\Sigma_t)} \hspace{2mm} \lesssim \hspace{2mm} \frac{\epsilon}{1+t}.$$
\end{Pro}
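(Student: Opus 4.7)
My plan is to run a continuity argument simultaneously on the two bounds $\E^3_H(t) \le 2C_H \epsilon (1+t)^\delta$ and $\E^2_H(t) \le 2C_H \epsilon$, where $C_H$ is chosen large enough to majorize the initial data (the bound $\E^3_H(0)+\E^2_H(0) \lesssim \epsilon$ follows from $H_i(0)=R_i(0)=\ZZ^{\beta^i} f_0$ with $|\beta^i| \le N$, after converting the three extra $\ZZ$-derivatives from the energy norm into $(1+|x|)$ and $(1+|v|)$ weights and invoking the hypothesis on $f_0$). The key structural input is that $T_\phi$ is commuted at most three times through $H_i$, so Lemma \ref{comutaH} only produces potentials $\nabla_x Z^\gamma \phi$ with $|\gamma| \le N-3$, which enjoy the pointwise decay $|\nabla_x Z^\gamma \phi| \lesssim \epsilon/((1+t)(1+|x|))$ from Proposition \ref{estiphi}.

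The norm $\E^n_H(t)$ splits into the inhomogeneous piece $\sum_{|\alpha|+|\zeta|\le 3} \int r z^{n-1} |\ZZ^\zeta T_\phi(\ZZ^\alpha H_i)| \frac{dv}{v^0} dx$, evaluated at the fixed time $t$, and the weighted $L^1$ piece $\sum_{|\xi|\le 3} \int z^n |\ZZ^\xi H_i| dv dx$. For the first piece I apply Lemma \ref{comutaH} to dominate the integrand by $(1+zr/(1+t+r)) |\nabla_x Z^\gamma \phi| |\ZZ^\xi H_j|$, and then use Proposition \ref{estiphi} to bound $r|\nabla_x Z^\gamma \phi|$ and $r^2(1+t+r)^{-1}|\nabla_x Z^\gamma \phi|$ in $L^\infty_x$ by $\epsilon(1+t)^{-1}$. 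The inhomogeneous piece is therefore $\lesssim \epsilon(1+t)^{-1}\E^n_H(t)$, harmlessly absorbed by the bootstrap.

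For the weighted $L^1$ piece I invoke the approximate conservation law (Proposition \ref{energyparti}) applied to $z^n \ZZ^\xi H_i$; since $H$ vanishes for $|v|\le 1$ (inherited from $R$ and $G=KW$), $1/v^0 \le 1$ on its support. Writing $T_\phi(z^n \ZZ^\xi H_i) = z^n T_\phi(\ZZ^\xi H_i) + \ZZ^\xi H_i \cdot T_\phi(z^n)$, Lemma \ref{reducing} with $g=z$ yields $|T_\phi(z^n)| \lesssim n z^n |\nabla_x \phi|$, while Lemma \ref{comutaH} at $\zeta=0$ handles the first term; both contributions reduce to a pointwise bound by $z^n(1+zr/(1+s+r)) |\nabla_x Z^\gamma \phi| |\ZZ^\xi H_j|$ with $|\gamma|\le N-3$. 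For $n=2$ I use $\tau_- \le z$ from Lemma \ref{zprop} to trade $z^2 \le z^3\tau_-^{-1}$; the multiplier $(\tau_-^{-1} + r/(1+s+r))|\nabla_x Z^\gamma \phi|$ is $L^\infty$-bounded by $\epsilon(1+s)^{-2}$ (using $\tau_-(1+r) \gtrsim 1+s$), so the source is $\lesssim \epsilon(1+s)^{-2}\E^3_H(s)$ with time-integral $O(\epsilon^2)$. For $n=3$ no $\E^4_H$ is available, so I instead exploit $z \lesssim 1+s+r$ to absorb the extra power of $z$: $z^4 r/(1+s+r) \lesssim z^3 r$, and $r|\nabla_x Z^\gamma \phi| \lesssim \epsilon(1+s)^{-1}$ yields a source $\lesssim \epsilon(1+s)^{-1}\E^3_H(s)$ with time-integral $\lesssim \epsilon^2 \delta^{-1}(1+t)^\delta$. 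Choosing $C_H$ large enough to dominate the universal constants and $\epsilon$ small enough then improves both bootstrap constants from $3C_H$ to $2C_H$, closing the argument.

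The $L^2$ estimate follows by applying Proposition \ref{decay} with $p=1$ to the components of $H$, which gives $\int_{\R^3_v} |H|(t,x,v) dv \lesssim \E^2_H(t)/((1+|x|)^2 \tau_-^2)$. Maximizing $1/((1+|x|)^2 \tau_-^2)$ over $x \in \R^3$ at fixed $t$ yields a supremum of order $(1+t)^{-2}$ (attained near the light cone $r \approx t$), so $\|\int|H|dv\|_{L^\infty(\Sigma_t)} \lesssim \epsilon(1+t)^{-2}$; combining with $\|\int|H|dv\|_{L^1(\Sigma_t)} \le \E^2_H(t) \lesssim \epsilon$ via the interpolation $\|u\|_{L^2}^2 \le \|u\|_{L^\infty}\|u\|_{L^1}$ produces the announced $\epsilon(1+t)^{-1}$ decay. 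The main obstacle throughout the argument is the $n=3$ step: a direct Grönwall bound would require the unavailable $\E^4_H$, and the fix is precisely the observation that every weight $w \in \V_0$ is controlled by $1+t+r$, allowing the geometric factor $r/(1+s+r)$ in Lemma \ref{comutaH} to swallow an extra power of $z$ without losing time decay.
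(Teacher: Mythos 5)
Your proof is correct and follows essentially the same route as the paper. You set up the same bootstrap, note the initial-data bound $\E^3_H(0)\le C_H\epsilon$ from the hypotheses on $f_0$, exploit exactly the key structural fact (Lemma \ref{comutaH} produces only $\nabla_x Z^\gamma\phi$ with $|\gamma|\le N-3$, hence pointwise decay is available), and run the approximate conservation law of Proposition \ref{energyparti}. Your $n=3$ computation (using $z\lesssim 1+t+r$ and $r|\nabla_x Z^\gamma\phi|\lesssim\epsilon(1+t)^{-1}$) and your $n=2$ computation (using $z^2\le z^3\tau_-^{-1}$ and $(\tau_-^{-1}+r/(1+t+r))|\nabla_x Z^\gamma\phi|\lesssim\epsilon(1+t)^{-2}$) are precisely the computations of Propositions \ref{pro1} and \ref{pro3} (case $|\gamma|\le N-3$), which the paper invokes by reference without rewriting; you have simply expanded them. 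The $L^\infty$ step via Proposition \ref{decay} with $p=1$, the observation $(1+|x|)^2\tau_-^2\gtrsim(1+t)^2$, and the interpolation $\|u\|_{L^2}^2\le\|u\|_{L^\infty}\|u\|_{L^1}$ coincide with the paper's final lines. The one minor imprecision is in your initial-data discussion: you mention converting the $\ZZ$-derivatives into $(1+|x|),(1+|v|)$ weights to bound the $\sum_{|\xi|\le 3}\int z^n|\ZZ^\xi H_i(0)|$ piece, but you do not explicitly address the inhomogeneous piece $\int rz^{n-1}|\ZZ^\zeta T_\phi(\ZZ^\alpha H_i)|(0)\,dv\,dx$ of $\E^n_H(0)$, which also requires controlling $\nabla_x Z^\gamma\phi(0)$; the paper handles this by applying Lemma \ref{comutaH} at $t=0$, and the same comment would complete your argument.
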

\begin{proof}
Applying Lemma \ref{comutaH} and using that $H(0,.,.)=R(0,.,.)$ as well as the smallness hypotheses on $\int_{\Sigma_0} \int_{\R^3_v} (1+|x|)^3 |\ZZ^{\beta} f| dv dx$ for all $|\beta| \leq N+3$, we directly obtain, for $\epsilon$ enough, that
$$ \exists \hspace{0.5mm} C_H >0, \hspace{1cm} \E^2_H(0) \hspace{2mm} \leq \hspace{2mm} \E^3_H(0) \hspace{2mm} \leq \hspace{2mm} C_H \epsilon.$$
The bounds on $\E^3_H$ and $\E^2_H$ then follow from a bootstrap argument and the use of the energy inequality of Proposition \ref{energyparti}. Indeed, according to Lemma \ref{comutaH}, the potential is differentiated at most $N-3$ times in all the terms composing $\ZZ^{\zeta} \left( T_{\phi}(\ZZ^{\beta} H_i) \right)$, when $|\zeta|+|\beta| \leq 3$, and can then be estimated pointwise (see Proposition \ref{estiphi}). Hence, for $\E^3_H$ (respectively $\E^2_H$), one just has to follow the computations of the proof of Proposition \ref{pro1} (respectively \ref{pro3} for the cases $|\gamma| \leq N-3$). Now, according to Proposition \ref{decay}, we have
$$ \forall \hspace{0.5mm} t \in [0,T[ , \hspace{1cm} \left\| \int_{\R^3_v} |H| dv \right\|_{L^{\infty}(\Sigma_t)} \hspace{2mm} \lesssim \hspace{2mm} \frac{\E_H^2(t)}{(1+t)^2} \hspace{2mm} \lesssim \hspace{2mm} \frac{\epsilon}{(1+t)^2}.$$
We finally obtain, using the Cauchy-Schwarz inequality in $v$, for all $t \in [0,T[$,
$$ \left\| \int_{\R^3_v} |H| dv \right\|^2_{L^2(\Sigma_t)} \hspace{2mm} \lesssim \hspace{2mm} \left\| \int_{\R^3_v} |H| dv \int_{\R^3_v} |H| dv\right\|_{L^1(\Sigma_t)} \hspace{2mm} \lesssim \hspace{2mm} \left\| \int_{\R^3_v} |H| dv \right\|_{L^{\infty}(\Sigma_t)} \E^2_H(t) \hspace{2mm} \lesssim \hspace{2mm} \frac{\epsilon^2}{(1+t)^2}.$$
\end{proof}
\subsection{Study of the inhomogeneous system}

In order to prove that $\E_G$ is uniformly bounded in time, we will need to control sufficiently well
$$\E_G^1(t) \hspace{2mm} := \hspace{2mm} \sum_{i=0}^{|\II|} \sum_{j=0}^{|\I|} \sum_{q=0}^{|\I|}  \int_{\Sigma_t} \int_{\R^3_v} z\left| K_i^j \right|^2 \left| W_q \right| dv dx $$
and to apply Proposition \ref{energyparti}. For this, remark that
\begin{equation}\label{sourceinhophi}
\T_{\phi} \left( |K^j_i|^2 W_q \right) \hspace{2mm} = \hspace{2mm} \T^{\chi}_{\phi} \left( |K^j_i|^2 W_q \right) \hspace{2mm} = \hspace{2mm} |K^j_i |^2 D^r_q W_r-2\left(A^r_i K^j_r +K^r_i D^j_r \right) K^j_i W_q+2 B^j_iK^j_iW_q.
\end{equation}

\begin{Pro}\label{GPro}
We have, if $\epsilon$ is small enough,
$$ \forall \hspace{0.5mm} t \in [0,T[, \hspace{1cm} \E_G^1(t) \leq \epsilon (1+t)^{\delta} \hspace{1cm} \text{and} \hspace{1cm} \E_G(t) \leq \epsilon.$$
\end{Pro}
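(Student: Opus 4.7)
The plan is a continuity argument. Since $K(0,\cdot,\cdot)=0$, both $\E_G$ and $\E_G^1$ vanish at $t=0$, so one may assume for contradiction that $\E_G(t)\le 2\epsilon$ and $\E_G^1(t)\le 2\epsilon(1+t)^\delta$ hold on some maximal interval $[0,T_\star[$ and seek to strictly improve the constants to $1$. Combining the source identity \eqref{sourceinhophi} with $\T_\phi(z)=\sigma v^0\nabla_x\phi\cdot\nabla_v z$ and the approximate conservation law of Proposition~\ref{energyparti} applied to the scalars $z^p|K^j_i|^2 W_q$ (for $p\in\{0,1\}$, with vanishing initial data), I reduce the task to bounding spacetime integrals of the schematic form
\[
\int_0^t\!\!\int_{\Sigma_s}\!\!\int_{\R^3_v} z^p\bigl(|B||K||W|+|A||K|^2|W|+|D||K|^2|W|\bigr)\,dv\,dx\,ds+p\!\int_0^t\!\!\int_{\Sigma_s}\!\!\int_{\R^3_v}\!|\nabla_x\phi||\nabla_v z||K|^2|W|\,dv\,dx\,ds,
\]
keeping in mind that $1/v^0\le 1$ on the support of $W$. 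Crucially, by Proposition~\ref{L2bilan}, $A$ involves derivatives of $\phi$ of order at most $N-6$ (hence pointwise controllable via Proposition~\ref{estiphi}), whereas $B$ and $D$ reach the top order $N$ (only $L^2$ control).

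For the $A$ contribution and the $|\nabla_x\phi||\nabla_v z|$ correction, I proceed in the spirit of Corollary~\ref{Corz} and Proposition~\ref{pro1}: Lemma~\ref{null} rewrites the $rY_{pq}Z^\gamma\phi$ pieces as $z|\nabla_x Z^\gamma\phi|$ plus rotational terms, and Proposition~\ref{estiphi} gives $|\nabla_x Z^\gamma\phi|\lesssim\epsilon(1+t)^{-1}(1+|x|)^{-1}$ for $|\gamma|\le N-6$, yielding, after using $|\nabla_v z|\lesssim 1$ and $\|\nabla_x\phi\|_{L^\infty}\lesssim\epsilon(1+t)^{-2}$,
\[
\int_0^t\!\!\int\!\!\int z^p|A||K|^2|W|\,dv\,dx\,ds\;\lesssim\;\epsilon\!\int_0^t\!\Big(\frac{\E_G^p(s)}{1+s}+\frac{\E_G^1(s)}{(1+s)^2}\Big)\,ds,
\]
where the second term comes from the $z$-weighted pieces produced by the null structure. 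For the top-order matrices $B$ and $D$, pointwise decay is unavailable, so I apply Cauchy--Schwarz in $v$ and then in $x$, pairing one factor of $|K|$ or $|W|$ with $\sqrt{|W|}$ as appropriate. Using the pointwise estimate $\|\int_{\R^3_v}|W|\,dv\|_{L^\infty(\Sigma_s)}\lesssim\epsilon(1+s)^{-2}$ from Proposition~\ref{L2bilan} and the $L^2$ bound $\|\nabla_x Z^\alpha\phi\|_{L^2(\Sigma_s)}\lesssim\epsilon(1+s)^{-1/2}$ from Proposition~\ref{estiphi}, one obtains schematically
\[
\int_0^t\!\!\int\!\!\int|B||K||W|\,dv\,dx\,ds\;\lesssim\;\int_0^t\Big\|\!\textstyle\int_{\R^3_v}|W|\,dv\Big\|_{L^\infty}^{1/2}\|B\|_{L^2(\Sigma_s)}\sqrt{\E_G(s)}\,ds\;\lesssim\;\epsilon^{3/2}\!\int_0^t\!\frac{\sqrt{\E_G(s)}}{(1+s)^{3/2}}\,ds,
\]
and the analogous estimate handles $\int|D||K|^2|W|$ after splitting $D$ into its low-order part ($|\gamma|\le N-3$, treated pointwise as for $A$) and its top-order part ($|\gamma|\ge N-2$, Cauchy--Schwarz with one $|K|^2|W|$ factor absorbed into $\E_G$ and the remaining $|W|^{1/2}$ paired with the pointwise decay). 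The $z$-weighted versions run identically, the extra $z$ factor being absorbed into $\E_G^1$.

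Plugging in the bootstrap bounds $\E_G\le 2\epsilon$ and $\E_G^1\le 2\epsilon(1+s)^\delta$, every time integral either converges uniformly or grows at most as $(1+t)^\delta$; taking $\epsilon$ small enough then strictly improves the bootstrap to $\E_G(t)\le\epsilon$ and $\E_G^1(t)\le\epsilon(1+t)^\delta$. The main obstacle is the contribution of $A$ to $\E_G$: the naive bound $|A|\lesssim\epsilon/(1+t)$ together with $\E_G\le 2\epsilon$ would give a forbidden $\epsilon^2\log(1+t)$ loss. The remedy is precisely the null structure of Lemma~\ref{null}: it extracts an additional $r/\tau_+$ factor from the $x_n Y_{pq}Z^\gamma\phi$ pieces of $A$, which combined with the pointwise $(1+r)^{-1}$ gain of $\nabla_x Z^\gamma\phi$ produces an extra $(1+s)^{-1}$ decay so that the corresponding integral is of the form $\int\E_G^1(s)(1+s)^{-2}ds\lesssim\epsilon\int(1+s)^{\delta-2}ds$, which is uniformly bounded. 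A symmetric Cauchy--Schwarz splitting handles the top-order $D|K|^2 W$ term; together they yield the desired strict improvement and close the bootstrap.
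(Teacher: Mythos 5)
Your overall architecture mirrors the paper's: a continuity argument from $\E_G(0)=\E_G^1(0)=0$, the energy inequality applied to $z^p|K^j_i|^2 W_q$ via \eqref{sourceinhophi}, and a split of the resulting spacetime integrals into $A$, $D$ and $B$ contributions. The $B$ term (Cauchy--Schwarz in $x$ then in $v$, pairing $|W|^{1/2}$ with the pointwise decay of $\int_v z^3|W|\,dv$) is handled essentially as in the paper, modulo some looseness about whether it is $\E_G$ or $\E_G^1$ that appears under the square root.

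There is, however, a genuine gap in your treatment of $D$. You read from the statement of Proposition~\ref{L2bilan} that $D$ ``reaches the top order $N$'' and therefore propose an $L^2$-based workaround. But $D$ arises from commuting the Vlasov equation for $W_k=\widehat{Z}^{\xi^k}f$ with $|\xi^k|\le N-6$; following the proof of Proposition~\ref{L2bilan} (commutation formula of Proposition~\ref{comu} followed by Lemma~\ref{reducing}), the $\phi$-derivatives entering $D$ have order $\le N-6$, hence are pointwise controlled by Proposition~\ref{estiphi}. The stated ``$|\gamma|\le N$'' in Proposition~\ref{L2bilan} is merely a non-sharp upper bound; the paper's proof of Proposition~\ref{GPro} explicitly uses $|A|+|D|\lesssim(1+z\tfrac{r}{1+t+r})\sum_{|\gamma|\le N-3}|\nabla_x Z^\gamma\phi|$ and treats $D$ exactly as $A$. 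Your proposed fallback --- ``Cauchy--Schwarz with one $|K|^2|W|$ factor absorbed into $\E_G$ and the remaining $|W|^{1/2}$ paired with the pointwise decay'' --- does not close: pairing a top-order factor $\|\nabla_x Z^\gamma\phi\|_{L^2(\Sigma_s)}$ with $\|\int_v z^p|K|^2|W|\,dv\|_{L^2(\Sigma_s)}$ and then Cauchy--Schwarz in $v$ produces $\int_v |K|^4|W|\,dv$, which is controlled by none of $\E_G$, $\E_G^1$, nor any quantity in the bootstrap. The only reason the $B$ term survives this scheme is that it is linear in $K$, so Cauchy--Schwarz yields $\int_v z|K|^2|W|\,dv$, i.e. exactly $\E_G^1$; the same trick is unavailable for the quadratic-in-$K$ term multiplied by a top-order $\phi$-factor. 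So the low-order nature of $D$ is not a convenience but a structural necessity, and your argument as written would not close for the $D$ contribution.

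A secondary imprecision concerns the $A$ contribution with $p=0$: you correctly flag the log danger, but the extra $(1+s)^{-1}$ factor you attribute to the null structure of the $x_n Y_{pq}Z^\gamma\phi$ pieces must also be extracted from the $\partial_\ell Z^\gamma\phi$ pieces, where it comes not from Lemma~\ref{null} but from $\tau_-\le z$ together with $\tau_-(1+r)\ge 1+t$ (that is, $|\nabla_x Z^\gamma\phi|\lesssim\epsilon(1+t)^{-1}(1+r)^{-1}\le\epsilon z(1+t)^{-2}$). The paper's two-sided bound $|A|+|D|\lesssim\min\bigl(\epsilon z(1+t)^{-2},\,\epsilon(1+t)^{-1}\bigr)$, applied to $\E_G$ and $\E_G^1$ respectively, is the concise formulation of exactly this point; your final displayed integral $\int\E_G^1(s)(1+s)^{-2}ds$ is correct, but the route to it needs to account for both types of pieces of $A$.
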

\begin{proof}
Since $\E_G^1(0)= \E_G(0)=0$, there exists $\widetilde{T} \in ]0,T]$ such that 
$$ \forall \hspace{0.5mm} t \in [0,\widetilde{T}[, \hspace{1cm} \E_G^1(t) \leq 2\epsilon (1+t)^{\delta} \hspace{1cm} \text{and} \hspace{1cm} \E_G(t) \leq 2\epsilon.$$
In order to improve these boostsrap assumptions, combine the energy inequality of Proposition \ref{energyparti} with \eqref{sourceinhophi} to get, for $t \in [0,\widetilde{T}[$, that
\begin{eqnarray}
\nonumber \E_G(t) \hspace{-2.2mm} & \leq & \hspace{-2.2mm} \sum_{\begin{subarray}{} 1 \leq i \leq |\II| \\ 1 \leq j,q \leq |\I| \end{subarray}} \hspace{-2.5mm}\int_0^t \hspace{-0.5mm} \int_{\Sigma_s} \hspace{-0.5mm} \int_{\R^3_v} \left| \T_{\phi} \left( |K^j_i|^2 W_q\right) \right| \frac{dv}{v^0} dx ds \hspace{0.6mm} \lesssim \hspace{0.6mm} \int_0^t \hspace{-0.5mm} \int_{\Sigma_s} \hspace{-0.5mm} \int_{\R^3_v} \left( (|A|+|D|)|K|^2|W| + |B||K||W| \right)\frac{dv}{v^0} dx ds \\ \nonumber
\E^1_G(t)  \hspace{-2.2mm} & \leq & \hspace{-2.4mm} \int_0^t \hspace{-0.6mm} \int_{\Sigma_s} \hspace{-0.6mm} \int_{\R^3_v} \hspace{-0.6mm} z\left( (|A|+|D|)|K|^2|W| + |B||K||W| \right)\frac{dv}{v^0} dx ds \hspace{-0.2mm} + \hspace{-0.8mm} \int_0^t \hspace{-0.6mm} \int_{\Sigma_s} \hspace{-0.6mm} \int_{\R^3_v} \hspace{-0.6mm} v^0 \left| \nabla_x \phi \cdot \nabla_v (z) \right| |K|^2 |W|\frac{dv}{v^0} dx ds.
\end{eqnarray}
Using first Proposition \ref{L2bilan} as well as \eqref{eq:number1} and then the pointwise decay estimates of Proposition \ref{estiphi} as well as $\tau_- \leq z$ (see Lemma \ref{zprop}), we have
$$ |A|+|D| \hspace{1.2mm} \lesssim \hspace{1.2mm} \left( 1+z\frac{r}{1+t+r}  \right) \sum_{|\gamma| \leq N-3}  |\nabla_x Z^{\gamma} \phi | \hspace{1.2mm} \lesssim \hspace{1.2mm} z\left( \frac{1}{1+|t-r|}+\frac{r}{1+t+r}  \right) \frac{\epsilon}{(1+r)(1+t)} \hspace{1.2mm} \lesssim \hspace{1.2mm} \frac{\epsilon z}{(1+t)^2}.$$
Similarly, one has, since $z \lesssim 1+t+r$,
$$ |A|+|D| \hspace{2mm} \lesssim \hspace{2mm} \left( 1+z\frac{r}{1+t+r}  \right) \sum_{|\gamma| \leq N-3}  |\nabla_x Z^{\gamma} \phi | \hspace{2mm} \lesssim \hspace{2mm} \frac{\epsilon}{1+t}.$$
Hence, as $v^0 \geq 1$ on the support of $W$,
\begin{eqnarray}
\nonumber \int_0^t \int_{\Sigma_s} \int_{\R^3_v} (|A|+|D|)|K|^2|W| \frac{dv}{v^0} dx ds \hspace{-0.4mm} & \lesssim & \hspace{-0.4mm} \int_0^t \hspace{-0.2mm} \frac{\epsilon}{(1+s)^2} \int_{\Sigma_s} \hspace{-0.2mm} \int_{\R^3_v} z|K|^2|W| dv dx ds \hspace{1.4mm} \lesssim \hspace{1.4mm} \int_0^t \frac{\epsilon \hspace{0.2mm} \E^1_G(s)}{(1+s)^2}  ds \hspace{1.4mm} \lesssim \hspace{1.4mm} \epsilon^2, \\ 
\nonumber \int_0^t \int_{\Sigma_s} \int_{\R^3_v} z(|A|+|D|)|K|^2|W| \frac{dv}{v^0} dx ds\hspace{-0.5mm} & \lesssim & \hspace{-0.4mm}  \int_0^t \frac{\epsilon \hspace{0.2mm} \E^1_G(s)}{1+s}  ds \hspace{2mm} \lesssim \hspace{2mm} \epsilon^2 (1+t)^{\delta}.
\end{eqnarray}
\end{proof}
Applying Lemma \ref{reducing} with $|\gamma|=0$ and $g=z$ and then Lemma \ref{zprop}, we have
$$v^0 \left| \nabla_x \phi \cdot \nabla_v (z) \right| \hspace{2mm} \lesssim \hspace{2mm} \left\| \nabla_x \phi \right\|_{L^{\infty}(\Sigma_t)} ( z |\nabla_x z|+\sum_{ \ZZ \in \VV} |\ZZ(z)| ) \hspace{2mm} \lesssim \hspace{2mm} z \left\| \nabla_x \phi \right\|_{L^{\infty}(\Sigma_t)}.$$
According to the pointwise decay estimate of Proposition \ref{estiphi} and that $v^0 \geq 1$ on the support of $W$, we then obtain
\begin{eqnarray}
\nonumber \int_0^t \int_{\Sigma_s} \int_{\R^3_v} v^0 \left| \nabla_x \phi \cdot \nabla_v (z) \right| |K|^2 |W|\frac{dv}{v^0} dx ds & \lesssim & \int_0^t \left\| \nabla_x \phi \right\|_{L^{\infty}(\Sigma_s)} \int_{\Sigma_s} \int_{\R^3_v} z |K|^2 |W|\frac{dv}{v^0} dx ds \\ \nonumber 
& \lesssim & \int_0^t \frac{\epsilon \hspace{0.5mm} \E_G^1(s)}{(1+s)^2} ds \hspace{2mm} \lesssim \hspace{2mm} \epsilon^2.
\end{eqnarray}
Note now, using Proposition \ref{L2bilan} and \eqref{eq:number1}, that
$$|B| \hspace{2mm} \lesssim \hspace{2mm} \left( 1+z\frac{r}{1+t+r}  \right) \sum_{|\alpha| \leq N}  |\nabla_x Z^{\alpha} \phi | \hspace{2mm} \lesssim \hspace{2mm} z \sum_{|\alpha| \leq N}  |\nabla_x Z^{\alpha} \phi |.$$ 
We then obtain, using the Cauchy-Schwarz inequality in $x$ and that $v^0 \geq 1$ on the support of $W$,
$$\int_0^t \int_{\Sigma_s} \int_{\R^3_v} z|B||K||W| \frac{dv}{v^0} dx ds \hspace{2mm} \lesssim \hspace{2mm} \sum_{|\alpha| \leq N} \int_0^t \| \nabla_x Z^{\alpha} \phi \|_{L^2(\Sigma_s)} \left\| \int_{\R^3_v} z^2 |K||W| dv \right\|_{L^2(\Sigma_s)} ds. $$
The Cauchy-Schwarz inequality in $v$ and the pointwise decay estimate $\int_{\R^3_v} z^3 |W|dv \lesssim \epsilon (1+t)^{-2+\delta}$ give
$$\left\| \int_{\R^3_v} z^2 |K||W| dv \right\|_{L^2(\Sigma_s)} \hspace{1mm} \lesssim \hspace{2mm} \left\| \int_{\R^3_v} z^3 |W|dv \int_{\R^3_v} z |K|^2|W| dv \right\|^{\frac{1}{2}}_{L^1(\Sigma_s)} \hspace{2mm} \lesssim \hspace{2mm} \frac{\sqrt{\epsilon}}{(1+s)^{1-\frac{\delta}{2}}} \sqrt{\E^1_G(s)} \hspace{2mm} \lesssim \hspace{2mm} \frac{\epsilon}{(1+s)^{1-\delta}} .$$
Finally, by Proposition \ref{estiphi}, we finally obtain
$$\int_0^t \int_{\Sigma_s} \int_{\R^3_v} z|B||K||W| \frac{dv}{v^0} dx ds \hspace{2mm} \lesssim \hspace{2mm} \sum_{|\alpha| \leq N} \int_0^t \| \nabla_x Z^{\alpha} \phi \|_{L^2(\Sigma_s)} \frac{\epsilon^2}{(1+s)^{1-\delta}} ds \hspace{2mm} \lesssim \hspace{2mm} \int_0^t \frac{\epsilon^2}{(1+s)^{\frac{3}{2}-\delta}} ds \hspace{2mm} \lesssim \hspace{2mm} \epsilon^2,$$
which concludes the improvement of the bootstrap assumptions on $\E_G$ and $\E^1_G$.

\subsection{End of the proof of Proposition \ref{improbootL2}}

Let $\ZZ^{\beta} \in \VV^{|\beta|}$ such that $N-2 \leq |\beta| \leq N$ (recall that we already treated the lower order derivatives). Then, there exists $i \in \llbracket 1, |\mathcal{I}|  \rrbracket$ such that $H_i+G_i = R_i = \ZZ^{\beta} f$. According to \eqref{eq:inhoG}, we have
$$\forall \hspace{0.5mm} t \in [0,T[, \hspace{1cm} \left\| \int_{\R^3_v} |\ZZ^{\beta} f | dv \right\|^2_{L^2(\Sigma_t)} \hspace{2mm} \leq \hspace{2mm} \left\| \int_{\R^3_v} |H_i | dv \right\|^2_{L^2(\Sigma_t)}+\left\| \int_{\R^3_v} |W | dv \right\|_{L^{\infty}(\Sigma_t)} \E_G(t).$$
The result then follows from Propositions \ref{HPro} and \ref{GPro} as well as $\int_{\R^3_v} |W | dv \lesssim \epsilon (1+t)^{-2}$ (see Proposition \ref{L2bilan}).

\section{Acknowledgments}

Part of this work was funded by the European Research Council under the European Union's Horizon 2020 research and innovation program (project GEOWAKI, grant agreement 714408).

\renewcommand{\refname}{References}
\bibliographystyle{abbrv}
\bibliography{biblio}

\end{document}